\providecommand{\ms}{\mathsf}
\providecommand{\mc}{\mathcal}
\def \CC{\omega}
\providecommand{\N}{\mathbb{N}}
\providecommand{\R}{\mathbb{R}}
\providecommand{\Z}{\mathbb{Z}}
\providecommand{\T}{S}
\providecommand{\D}{D}
\providecommand{\W}{\mathbb{W}}
\renewcommand{\P}{\mathbb P}
\providecommand{\Pc}{\P^{\ms{Coup}}}
\providecommand{\Pca}{\P^{\ms I}}
\providecommand{\Pcb}{\P^{\ms D}}
\providecommand{\Fc}{F^{\ms{Scr} }}
\providecommand{\Ec}{\E^{\ms{Coup}}}
\providecommand{\E}{\mathbb E}
\providecommand{\Cco}{\omega^{\ms B}}
\providecommand{\scr}{\ms{scr}}
\providecommand{\Cs}{\CC^{\scr}}
\providecommand{\Css}{\CC^{\scr}}
\renewcommand{\H}{\mc H}
\providecommand{\tH}{\widetilde H}
\providecommand{\wt}{\widetilde}
\renewcommand{\d}{{\rm d}}
\providecommand{\PBB}{\P_{\ms{BB},N}}
\providecommand{\BB}{\ms{BB}}
\providecommand{\dd}{{\rm d}}
\providecommand{\mun}{\mu_{N, \b}}
\providecommand{\es}{\emptyset}
\providecommand{\one}{\mathbbmss{1}}
\providecommand{\Ps}{\mc P_{\ms{s}}}
\providecommand{\Pu}{\mc P}
\providecommand{\Ii}{I^{(i)}}
\providecommand{\Es}{E^{\scr}}
\providecommand{\EB}{E^{\ms B}}
\providecommand{\Pst}{\mc P_s}
\providecommand{\ent}{{\ms{ent}}}
\providecommand{\Pois}{\ms{Pois}}
\providecommand{\Conf}{\ms{Conf}}
\providecommand{\Neut}{\ms{Neut}}
\renewcommand{\div}{\ms{div}\,}
\providecommand{\Emp}{\ms{Emp}_N}
\providecommand{\DA}{\ms{Avg}_R}
\providecommand{\vgN}{\CC}
\providecommand{\gN}{g}
\providecommand{\vzN}{\vec{z}_N}
\providecommand{\WR}{\frac1R \tH_{\Lambda_R}}
\providecommand{\WRM}{\W^{M}_R}
\providecommand{\WMM}{\W^M}
\providecommand{\rpp}{x_-}
\providecommand{\rmm}{x_+}
\providecommand{\cm}{c_{\ms{mov}}}
\renewcommand{\t}{\tilde}
\providecommand{\e}{\varepsilon}
\renewcommand{\b}{\beta}
\providecommand{\z}{\zeta}
\providecommand{\de}{\delta}
\def\tcb{}
\def\bi{\begin{itemize}}
\def\ei{\end{itemize}}
\def\be{\begin{equation}}
\def\ee{\end{equation}}
\def\bea{\begin{eqnarray}}
\def\eea{\end{eqnarray}}
\def\nn{\nonumber}
\def\ff{\infty}
\def\({\left(}
\def\){\right)}
\def\[{\left[}
\def\]{\right]}
\def \P{\mathbb P}
\def \E{\mathbb E}
\def \CP{\CC^{\ms P}}
\def \RP{R^{\ms P}}
\def \ZB{Z^{\ms B}}
\def \ZP{Z^{\ms P}}
\def \BP{B^{\ms P}}
\def \II{\mathcal{I}}
\def\La{\Lambda}
\def\taub{\tau_{\mathcal L_b}}
\def\Th{\Theta}
\def\a{\alpha}
\def\th{\theta}
\def\ol{\overline}
\def\ti{\times}
\def\vp{\varphi}
\def\su{\subseteq}
\def\been{\begin{enumerate}}
\def\enen{\end{enumerate}}
\def\beit{\begin{itemize}}
\def\enit{\end{itemize}}
\def\im{\item}
	\def\pa{\partial}
	\newtheorem{theorem}{Theorem}[section]
\newtheorem{corollary}[theorem]{Corollary}
\newtheorem{lemma}[theorem]{Lemma}
\newtheorem{proposition}[theorem]{Proposition}
\theoremstyle{definition}
\newtheorem{definition}[theorem]{Definition}
\newtheorem{remark}[theorem]{Remark}
	\patchcmd{\section}{\scshape}{\bfseries}{}{}
	\renewcommand{\@secnumfont}{\bfseries}
	\keywords{Coulomb systems, jellium, quasi one dimensional systems, large deviations principle, Feynman-Kac representation, screening, marked point process}
	\subjclass[2010]{60F10, 60K35, 82B21, 82B10}
\begin{document}
		
		\author{Christian Hirsch}
		\author{Sabine Jansen}
		\author{Paul Jung}
		\address[Christian Hirsch]{Department of Mathematics, Aarhus University,  Ny Munkegade 118,  8000 Aarhus C, Denmark}
		\email{hirsch@math.aau.dk}
		\address[Sabine Jansen]{Mathematisches Institut, Ludwig-Maximilians-Universit\"at M\"unchen, Theresienstra\ss e 39, 80333 M\"unchen, Germany}
		\email{jansen@math.lmu.de}
		\address[Paul Jung]{Department of Mathematical Sciences, KAIST, 291 Daehak-ro Yuseong-gu Daejeon 34141 South Korea}
		\email{pauljung@kaist.ac.kr}
		
		\thanks{PJ was funded in part by the National Research Foundation of Korea grants NRF-2017R1A2B2001952 and NRF-2019R1A5A1028324.} 
		
		\title{Large deviations in the quantum quasi-1D jellium}
		
		\date{\today}
		
		\begin{abstract}
			Wigner's jellium  is a model for a gas of electrons. The model consists of $N$ particles with negative unit charge in a sea of neutralizing homogeneous positive charge spread out according to Lebesgue measure, and interactions are governed by the Coulomb potential. In this work we consider the quantum jellium on quasi-one-dimensional spaces with Maxwell-Boltzmann statistics. Using the Feynman-Kac representation, we replace particle locations with Brownian bridges. We then adapt the approach of Lebl\'e and Serfaty (2017) to prove a process-level large deviation principle
			for the empirical fields of the Brownian bridges.
			
			\vspace{2mm}

		\end{abstract}
		
		\maketitle

	\tableofcontents
	
	\section{Introduction}
	\label{intrSec}

	\begin{figure}[!htpb]
	\input{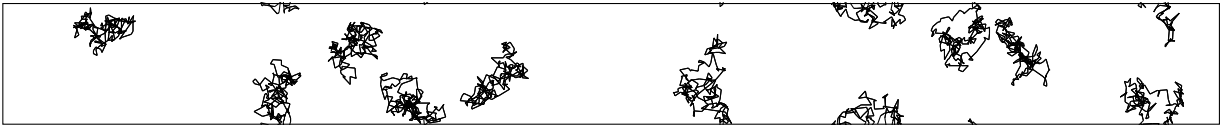}
	\caption{Illustration of the Brownian-bridge interpretation of the quasi-one-dimensional quantum jellium}
	\label{modelFig}
\end{figure}

	The jellium, introduced by Wigner \cite{wigner1934interaction}, is a model for a gas of electrons moving in a (locally) uniformly smeared neutralizing background of positive charge. 
	Wigner predicted that when the potential energy of the system overwhelms the kinetic energy, the electrons would form a ``close-packed lattice configuration''.
	We are interested in the quasi one-dimensional quantum jellium which is a model of electrons inside an insulated conducting wire with some thickness (quasi one-dimensional systems are infinite in one direction and bounded in all other directions, e.g. an infinite cylinder).

	In the physics literature, Deshpande and Bockrath \cite{deshpande2008one} observed Wigner-crystal type behavior in experiments on carbon nanotubes (see also \cite{meyer2008wigner, deshpande2010electron}) and \cite{PhysRevB.94.115417} studied thermal
		effects on crystallization.
	In the mathematics literature, the classical quasi-one-dimensional jellium, locally two-dimensional, was considered by \cite{choquard1983two, forrester1983two} where the system was seen to be exactly solvable when the inverse temperature satisfies $\b = 2$ (with electrons having unit charge). An expansion of the free energy of the system, in terms of the period of a hypercube in the bounded directions, is found in \cite{forrester1991finite}. Translation symmetry breaking was shown (in support of Wigner's prediction) when $\b$ is an even integer in \cite{vsamaj2004translation, jansen2008laughlin} where the latter article focused on a connection to Laughlin states in cylindrical geometry. This symmetry breaking was extended to all values of $\b$ in \cite{aizenman2010symmetry}.

	Wigner's original model was of course in the quantum setting. For the quantum one-dimensional jellium \cite{brascamp1975some} proved crystallization (or translation symmetry breaking), by introducing the well-known Brascamp-Lieb inequality, for the one-dimensional quantum system at sufficiently low temperatures. This was extended to all temperatures in \cite{jansen2014wigner}.
	
	In this work we investigate the free energy, in the thermodynamic limit, of the quantum quasi one-dimensional jellium at low density through the lens of large deviations. The large deviations approach to investigating Coulomb gases has recently been used by \cite{garcia2017large, berman2018large, chafai2020macroscopic, vaios, liu2020large} among others. At the level of fluctuations or level three (process level) large deviations, results have been obtained by
	 \cite{serfInv, serfBook} for $d\ge 2$ (in one dimension, they employ the log-potential which is not the Coulomb potential for that dimension, but allows for connections to random matrix theory).
	In order to extend this approach to the quantum setting, we use the Feynman-Kac representation and replace particle locations with Brownian bridges or Brownian loops which are sometimes referred to as filaments. {One may equivalently take the view that the particle locations are `marked' with Brownian bridges.} In the context of Coulomb gases this representation goes back to \cite{ginibre1965reduced} and was used by \cite{brascamp1975some, jansen2014wigner} (see also \cite{brydges1999coulomb}). The introduction of Brownian bridge filaments into the picture of the Gibbs measures complicates the mathematics considerably, and to our knowledge, no such rigorous study for the thermodynamic limit of the quantum jellium has yet been established beyond the one-dimensional setting.
	
	{While the model in this work} is geared towards the free energy of the jellium in the quantum setting, most of the analysis below can be adapted to other random marked point configurations. Such point configurations, with marks in path spaces, come up naturally in the study of infinite-dimensional interacting diffusions~\cite{lang1977,fritz87gradient,spohn1986}, see  \cite{deuschel87diffusions,dereudre2003interacting-brownian}. A Gibbs variational principle for interacting diffusions with infinite time-horizon, seen as space-time Gibbs random fields, was proven by Dai Pra, Roelly, and Zessin~\cite{daipra-roelly-zessin2002}. The aforementioned references focus on diffusions interacting via superstable pair potentials, but singular interactions---including interactions of the Coulomb type---have been studied as well, in part motivated by random matrices and Dyson's model \cite{spohn1986dyson,osada2012,tsai2016}. {The large deviation techniques explored here could be of relevance to diffusions with singular interactions.}

	One complication that arises in the Feynman-Kac representation using Brownian bridge filaments, is how to deal with the Fermi-Dirac statistics inherent in the representation. For one-dimensional systems, this is easy since the Brownian reflection principle allows such statistics to be equated with the consideration of non-intersecting Brownian bridges (see \cite[Section 3.1]{jansen2014wigner} for details).
	In higher dimensions assuming the absence of hard cores, particles do not collide, thus no such simplification is possible. Since the quasi-one-dimensional jellium is locally of higher dimension than one, we will simplify things by considering Boltzmann statistics. Such a simplification is reasonable at low density, when the Debye length is much smaller than the interparticle distance, as noted in \cite{brydges1994absence, brydges1999coulomb}. In this work, we will take as a starting point, the so-called ``magic formula'' (see Section V.1 of \cite{brydges1999coulomb}) which in our case amounts to replacing electrons with Brownian bridges of time-length $\b$. 

	{In Sections \ref{sec:truncation}-\ref{sec:energy}, we introduce the model in detail. In Section \ref{sec:main result}, we state our main result  establishing a (level three) large deviation principle (LDP) for Brownian bridge configurations in the quasi-one-dimensional jellium, and in Section \ref{sec:contributions}, we summarize our main contributions. In Section \ref{outlineSec}, we give a top level proof of our LDP. The remaining Sections \ref{sec:energy entropy}-\ref{sec:quasicontinuity} concern the technically most challenging part of our result-- establishing the so-called quasi-continuity of the specific energy which leads to the lower bound of our LDP.}

	\section{Model definition and main results}
	\label{modSec}
\tcb{Let $\D:=[0, 1]^k$ be the unit cube in dimension $k\ge1$.}	The {\it jellium} describes $N\ge1$ negatively-charged particles in the finite quasi one-dimensional domain 
	\begin{align}\label{def:TN}
		\La_N := [0, N] \ti \D 
	\end{align}
	of positive background charge, where particles are subject to pairwise interaction by the \tcb{$(1 + k)$-dimensional} Coulomb potential. Since $|\D| = 1$, the system is charge neutral upon equipping $\La_N$ with a 
	positive smeared background charge with density one \cite{wigner1934interaction}. The potential energy of a collection of $N\ge1$ particles
	$$\vzN = (z_1, \dots, z_N) = ((x_1, y_1), \dots, (x_N, y_N)) \in \La_N^N$$ 
	is given by
	\begin{align}
	\label{potEnEq}
	 \H_{\La_N}(\vzN) = \sum_{i < j} g(z_i - z_j) - \sum_{i \le N} \int_{\La_N} \gN(z - z_i) \d z + \frac12 \iint_{\La_N^2} \gN(z - z') \d z \d z' 
	\end{align}
	where $g(z)$ is the Green's function \tcb{of the Laplacian} on the domain $$\T := \R \ti \D.$$ That is, \tcb{writing $o \in S$ for the origin}, 
	$-\Delta g = \de_o$,	where we use free boundary conditions in the $x$-direction and periodic boundary conditions in the $y$-directions. The Green's function admits the eigenfunction expansion (see \cite[Sec. 2]{aizenman2010symmetry})
	\begin{eqnarray} \label{kernel_expansion}
	g(z-z') & = &
	-|x-x'|/2 \ + \  \sum_{n\ge 1}  (2\sqrt{\lambda_n})^{-1}\, e^{-|x-x'| \sqrt{\lambda_n}
	}  \, \ol{\vp_n(y)}\vp_n(y')
	\end{eqnarray}
	in terms of the eigenfunctions $-\Delta_D \vp_n(y) = \lambda_n
	\vp_n(y)$.

	\subsection{Short-distance regularization}\label{sec:truncation}

	The mathematical interest in Coulomb systems stems from their long-range interactions. Besides decaying slowly over long distances, the Coulomb potential also exhibits a singularity at the origin, thereby creating additional technical difficulties.  We circumvent the issues arising from the singularity by
	following \cite{serfInv} and 
	using Onsager's approach of smearing out point charges uniformly in a sphere $\pa B_\eta(z)$ of a fixed small radius $\eta < 1/4$ around a particle located at $z \in \La_N$ \cite{onsager}. With regards to the smearing, we view smeared particles to be in the larger space $S$ so that there is no ambiguity for how to smear $z$'s with $x$-coordinates near the boundaries of $[0, N]$. We write 
	$$
		\de^\eta_z := \frac1{|\pa B_\eta(z)|} \int_{\pa B_\eta(z)} \de_{z'} \d z'
	$$
	for the measure of the smeared point charge. This operation replaces the potential energy $\H_{\La_N}(\vzN)$ by 
	\begin{align}
	\label{potEnEetaEq}
	\H_{\La_N}^\eta(\vzN) & = \sum_{i < j} g^{\eta, \eta}(z_i - z_j) - \sum_{i \le N}\int_\T \int_{\La_N} \gN(z - z' )\d z\de^\eta_{z_i}(dz')
	+ \frac12 \iint_{\La_N^2} g(z - z') \d z \d z'.
	\end{align}
	where 
	$$g^{\eta, \eta}(z) := \iint_{\T^2}g(z' - z'') \de_z^\eta(\d z') \de_o^\eta(\d z'').$$
	{The function $\H_{\La_N}^\eta(\vzN)$ is invariant with respect to permutation of the arguments $z_1,\dots, z_N$; by some abuse of notation we use the same letter $\H_{\La_N}^\eta$ for the function of the \emph{set} $\{z_1,\dots, z_N\}$, with the $z_i$'s pairwise distinct.}

{ We note that in \cite{serfInv}, the $\eta$-smearing was eventually removed by taking $\eta \to 0$ in the thermodynamic limit. However, the techniques from \cite{serfInv} do not extend smoothly to the quantum setting. Indeed, their central step \cite[Lemma 5.9]{serfInv} consists of a comparison to a regularized configuration, where particles that are too close to each other are carefully separated.  In the setting of Brownian bridges, such regularizations would differ at different times, and it is not clear whether this can be achieved at low entropic cost while simultaneously satisfying the continuity restrictions induced by the Brownian bridge paths. For instance, a significant technical difficulty arises when one Brownian particle enters the $2\eta$-sphere of another particle, and exits on the opposite side, nowhere close to its entry point. \footnote{{The naive approach of replacing exceptional bridges by near-straight line paths  (approximately constant in time, with small fluctuations around their starting points) seems difficult. Indeed, such a replacement would lead to major changes of the interaction potential with other bridges, which seem challenging to control. The sort of changes of the interaction potential resulting from any short-range regularization involving near-straight line paths can be seen in Proposition \ref{prop:detscreening}. However, unlike the situation there, changes to configurations due to any short-range regularization would not be restricted to bridges near the boundaries of the region.}
	}}

	\subsection{Configuration space} 
{When thinking of particles as electrons, the quantum setting employs Fermi-Dirac statistics as in \cite{jansen2014wigner}. However, the standard one-dimensional method of employing the Brownian reflection principle, as in \cite{jansen2014wigner}, does not extend to the quasi-1D setting. We therefore work with the simpler Boltzmann statistics, which, as mentioned already, becomes a reasonable approximation at low densities \cite{brydges1999coulomb}.}

	Invoking the Feynman-Kac representation \cite{brydges1999coulomb, ginibre}, the Boltzmann statistics for the quantum jellium can be deduced by analyzing a system of standard Brownian bridges subject to Coulomb interactions in each time slice. 

	More precisely, let $C([0, \b])$ be the space of continuous paths from $[0, \b]$ to $\T$, with times $0$ and $\b$ identified {i.e., $\omega(0)= \omega(\beta)$}, and let $\Conf$ denote the space of all configurations of the form
	\be \label{confidef}
		\CC = \bigcup_{j \in J}{\{}(b_j(0), b_j) {\}}
	\ee 
	with $J$ a countable set {(possibly finite)} and $(b_j)_{j\in J}$ a {locally finite} collection of bridges, i.e. elements in $C([0, \b])$, such that for every compact set $K\su S$, the set $K\cap \{b_j(0):\, j\in J\}$ is finite.  {The space  $C([0,\beta])$ is equipped with the supremum norm and associated Borel $\sigma$-algebra, and $S\times C([0,\beta])$ with the product $\sigma$-algebra.} The $\sigma$-algebra of $\Conf$ is the smallest one such that the maps 
	$$\CC \mapsto \#\{j\in J:\,(b_j(0), b_j) \in T\}$$
	are measurable for all measurable \tcb{$T \su S\ti C([0, \b])$} ($\#$ denotes cardinality). In particular, by shifting the marks in the sense of $\{(b_j(0), b_j - b_j(0))\}_{j \in J}$, our framework corresponds to that of marked point processes as presented in \cite{georgii2}. Let $\Pu$ be the space of probability measures on $\Conf$.

	The projection $\pi_\La:\,\Conf\to \Conf$ to a measurable set $\La \su S$ maps the configuration $\CC$ given by \eqref{confidef} to 
	\be\label{def:proj}
		\pi_\La(\CC) = \omega_\La := \big\{(b_j(0), b_j)\in\CC:\, b_j(0)\in\La\big\}.
	\ee
 Further, let 
\be \label{projdef}
	P_\La = P\circ \pi_\La^{-1}
\ee
be the image of $P\in \Pu$ under $\pi_\La$ and let 
	\begin{align}\label{def:conf}
		\Conf(\La) & := \pi_\La(\Conf)
	\end{align} 
denote the space of configurations {with starting points} in $\La$.

We will typically use the shorthand $b\in \CC$, or similarly, suppress the first coordinate of the ordered pair $(b(0),b)$ since this can be easily found by evaluating $b$ at time $0$. {We also use the notation 
\be
	\omega(t) := \{ b(t):\ b \in \omega\}\subset S,\quad t\in [0,\beta].
\ee
}

\subsection{Brownian bridge measure}
{In the following $\PBB(\d b)$ is the distribution of a standard Brownian bridge with uniformly distributed starting point in the background $\La_N$. Note however, that the bridge is allowed to leave the region $\La_N$. Denote by $\P_N$ the image of the product measure 
\begin{align}\label{def:BBmeasure}
	 \PBB(\d b_1) \otimes \cdots \otimes \PBB(\d b_N)
\end{align} 
under the mapping $C([0,\beta])^N\to \mathsf{Conf}(\Lambda_N)$, $(b_1,\dots, b_N)\mapsto \{b_i:\ i=1,\dots, N\}$. Equivalently, $\P_N$ is the distribution of a Binomial point process with independent Brownian bridge marks.}

Henceforth, we analyze the Gibbs measure
\begin{align}\label{def:muNb}
	\mun(\d\CC) := \frac1{Z_{N, \b}} \exp\Big(- \int_0^\b \H_{\La_N}^\eta(\CC(t)) \d t\Big) \P_N(\d\CC), 
\end{align}
with partition function
$$
	Z_{N, \b} = {\int_{\mathsf{Conf}(\Lambda_N)}} \exp\Big(-\int_0^\b \H_{\La_N}^\eta(\CC(t)) \d t\Big) \P_N(\d \CC).
$$
In other words, $ \mun(\d\CC)$ is absolutely continuous with respect to $\P_N(\d \CC)$ with Radon-Nikodym density
$$\frac1{Z_{N, \b}} \exp\Big(- \int_0^\b \H_{\La_N}^\eta(\CC(t)) \d t\Big).$$

\subsection{Empirical field}
 Discrete shifts by $k \in \Z$ act on $(x, y) \in S$ and $b \in C([0, \b])$ as follows:
\be\label{def:shift}
	 \th_k (x, y) = (x - k, y), \quad (\th_k b)(t) = \th_k(b(t)).
\ee 
\tcb{The main result of this paper is an LDP on the level of empirical fields, sometimes referred to as a level-three or process-level LDP. To put this into a general context, note that the paradigm of statistical physics is to consider large systems of particles and describe the behavior of observables that are realized as an average over such a system \cite[Ch. 6]{rassoul2015course}. More precisely, }for a collection of bridges $\CC$, we consider the \emph{empirical field}
\begin{align}\label{def:EMP}
\Emp(\CC) := \frac 1N \sum_{0 \le i \le N - 1} \de_{\th_i \CC}, 
\end{align}
as an element of the space $\Pu$ of probability measures on $\Conf$. Our choice of averaging only over integer shifts reflects the occurrence of crystallization and the appearance of fractional charges as striking characteristics of quasi one-dimensional Coulomb systems \cite{aizenman2010symmetry, aizenman1980structure}.
{We remark that since $\CC$ is a configuration consisting only of bridges, i.e. not including any background charge, the $\th_k$-shifts do not affect the background charge.}

\tcb{We would like to consider the limiting behavior of observables on $N$-particle systems as $N$ grows, thus we next describe the relevant topology for these limits.}


\subsection{Topology {and $\sigma$-algebra on $\mc P$}}\label{sec:topology}
We now specify the topology on $\mc P$ entering the large deviation principle for the distribution of the empirical fields.
A bounded measurable function $f:\Conf\to \R$ is \emph{local} -- in symbols $f \in \mc L_b$ -- if it depends only on bridges with starting points in some bounded region, i.e., if there exists a bounded set $\La\su S$ such that $f(\CC) = f(\CC_\La)$ for all $\CC$. The $\taub$-topology is the smallest topology such that all evaluation maps $P \mapsto \E_P[f(\CC)]$, $f\in \mc L_b$, are continuous. We endow $\Pu$ with the smallest $\sigma$-algebra such that for every bounded or non-negative measurable $f:\Conf\to \R$, the map $P\mapsto \E_P[ f(\CC)]$ is measurable as well \cite{georgii2}.

Since the $\taub$-topology is neither metrizable nor separable, many arguments become more technical than their analogs in the weak topology of \cite{serfInv}. The latter is weaker than the $\taub$-topology since it relies on bounded local test functions that are continuous on $\Conf$ where point configurations in $\Conf$ are identified with counting measures and equipped with the vague topology. However, we found the restriction to continuous test functions to be not versatile enough when dealing with configurations of bridges instead of particles. For example, the upper-semicontinuity argument in \cite[Lemma 5.7]{serfInv} uses that in a certain infimum over vector fields a minimum is attained. This is not clear if the fields vary over time. 

 In particular, the $\sigma$-algebra and topology on $\mc P$ are the analogs of the cylinder $\sigma$-algebra $\mc B^\mathrm{cy}$ and the $\tau$-topology described in the context of Sanov's theorem in \cite[Chapter 6.2]{dz98}, see the comments after \cite[Definition 0.2]{georgii1}. One should keep in mind that the $\sigma$-algebra on $\mc P$ is not the Borel-$\sigma$-algebra of the $\taub$-topology and so, in particular, not every open set is measurable. As a consequence, large deviation principles must be formulated carefully, and compactness of level sets of rate functions no longer implies exponential tightness, see \cite{eichelsbacher-grunwald99} and the remarks preceding \cite[Lemma 1.2.18]{dz98}.

\begin{remark}
	For point processes with finite intensity, another choice is the class $\mc L$ of \emph{tame} local functions and the associated $\tau_\mc L$-topology \cite{georgii2}. For point processes without marks, a local function is \emph{tame} if $|f(\CC)|\le C(1+\mc \#\CC_\La)$ for some $C>0$, some bounded $\La$, and all $\CC$. The class of tame local functions is strictly larger than the class of bounded local functions, therefore the topology $\tau_\mc L$ is finer than the topology $\taub$, however as noted in the proof of Proposition 2.6 in \cite{georgii2}, on sublevel sets $\{\ent(P)\le c\}$, $c>0$, the topologies coincide. 
\end{remark}

{In view of the above discussion, it is helpful to single out the measurable open neighborhoods around some $P\in\mc P$. We use the notation
	\begin{align}\label{def:N(P)}
		\text{ $\mc U_{\text{meas}}(P)$ is the family of measurable neighborhoods of $P$}.
	\end{align}
	The set $\mc U_{\text{meas}}(P)$ contains, in particular, a basis of the $\taub$-topology of cylinder sets 
	\be \label{topology-basis}
	U_{F, P} := \bigcap_{j \le n} \big\{Q \in \mc P:\, \bigl|\E_Q[f_j]- \E_P[f_j]\bigr| < \de \big\}
	\ee
	with $\de>0$, $n \ge 1$, and $F = \{f_1, \dots, f_n\}$ a finite set of bounded local functions.}

\subsection{Entropy} \label{subsec:entropy}
A \emph{bridge configuration} refers to a deterministic locally finite collection of $C([0, \b])$-marked points in $\T$, whereas the space $\Ps$ consists of shift-invariant probability measures ($P\circ \th_k = P$ for all $k\in \Z$) describing \emph{stationary bridge processes}. 

We let $\Pois$ denote the law of a unit-intensity homogeneous Poisson point process on $\T$, marked  with {independent} Brownian bridges of diffusion parameter 1 and time-length $\b$. {See~\cite[Chapter~5.3]{last2017lectures} for an abstract definition of independent markings.} 
 For $P \in \Ps$, let
\begin{align}\label{def:entP}
	\ent(P) := \lim_{N \to \ff} \frac1N \int \log\Big(\frac{\d P_{\La_N}}{\d \Pois_{\La_N}}(\CC)\Big) P_{\La_N}(\d \CC), 
\end{align}
denote the \emph{specific relative entropy} of $P$ with respect to $\Pois$ \cite{georgii2}, applying the convention that $\ent(P) = \ff$ if for some $N$, $P{_{\La_N}}$ is not absolutely continuous with respect to ${\Pois_{\La_N}}$. It can be seen from \tcb{sub-additivity arguments in} \cite{georgii1, georgii2, rassoul2015course} that this limit exists. In order to state the LDP formally in the space $\Pu$ instead of $\Ps$, we set $\ent(P) = \ff$ if $P \in \Pu \setminus \Ps$.

\subsection{Energy}\label{sec:energy}
A crucial property of Coulomb systems is an intimate relation between the energy \eqref{potEnEq}, and the energy of an associated electric field (for instance, this appears 
in the \emph{splitting formula} in \cite[Proposition 3.3]{serfBook}). More precisely, consider a bounded region {$\La$} and a configuration $\CC \in \Conf(\La)$. For each time $t \le \b$, the points $b(t)$ of the Brownian bridges $b\in \CC$ together with the homogeneous opposing background charge in $\La$ create the electrostatic potential 
\begin{align}\label{elFieldEq}
	 V_t(z, \CC, \La) := \int_\T \gN(z - z') \Big(\sum_{b\in {\CC_\La}} \de^\eta_{b(t)} - \one_\La \Big)(\d z').
\end{align}
The integral is over $S$ rather than $\La$ because Brownian bridges $b(t)$ may leave $\La$.  Let 
\be \label{Hdef}
	H_{\b, \La}(\CC) := \frac1{2\b}\int_{\T \ti [0, \b]}|\nabla V_t(z, \CC, \La)|^2 \d(z, t).
\ee
Denote by 
\begin{align}\label{def:neut}
\Neut(\La) & := \big\{\CC\in \Conf(\La):\, \#\CC = |\La| \big\},
\end{align} 
the space of all \emph{charge-neutral} configurations.
In Appendix \ref{sec:aux}, we integrate by parts to prove the following variation of a standard identity.
\begin{lemma}[Energy in terms of electric field]
	\label{splitLem}
	Let $\La = {[L_-, L_+] \ti D}$ with $L_-, L_+\in \Z$ and let $\CC \in \Neut(\La)$. Then,
	\begin{align}
		\label{splitEq}
		\frac1\b \int_0^\b\H^\eta_\La(\CC(t)) \d t = H_{\b, \La}(\CC) -N g^{\eta, \eta}(o).
	\end{align}
\end{lemma}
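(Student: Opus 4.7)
\medskip
\noindent \textbf{Proof plan for Lemma \ref{splitLem}.}
The plan is to view the truncated energy $\H_\La^\eta(\omega(t))$ as the interaction energy of a signed neutral charge distribution
\be
\mu_t := \sum_{b \in \omega} \de^\eta_{b(t)} - \one_\La,
\ee
so that $V_t(\,\cdot\,,\omega,\La) = g*\mu_t$ solves $-\Delta V_t = \mu_t$ on $S$ with periodic boundary conditions in $D$, and then to recover the energy via an integration by parts of $\int_S |\nabla V_t|^2$. The charge-neutrality assumption $\CC\in\Neut(\La)$ (which together with $L_\pm\in\Z$ gives $\#\omega=|\La|=L_+-L_-$) is exactly what is needed so that $\int_S d\mu_t = 0$.

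\medskip

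\noindent \emph{Step 1 (integration by parts).} Fix $t\in[0,\beta]$. Using $-\Delta V_t = \mu_t$ and Green's identity, I would show
\be \label{eq:ibp}
\int_S |\nabla V_t|^2\,\dd z \;=\; \int_S V_t\,\dd \mu_t \;=\; \iint_{S\times S} g(z-z')\,\dd \mu_t(z)\,\dd \mu_t(z').
\ee
The justification of the boundary terms is the main technical issue. In the quasi-one-dimensional geometry, the expansion \eqref{kernel_expansion} decomposes $g$ into a linear piece $-|x-x'|/2$ and a sum of terms decaying exponentially in $|x-x'|$. Since $\mu_t$ is neutral and supported in an essentially bounded strip (up to Gaussian tails coming from the Brownian marks), the linear piece contributes a constant at $x\to\pm\infty$ and $\nabla V_t$ inherits exponential decay in $x$ together with periodicity in $y$; this makes the surface terms at $x=\pm L$ vanish in the limit $L\to\infty$ and validates \eqref{eq:ibp}.

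\medskip

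\noindent \emph{Step 2 (expansion).} Expanding the product measure $\mu_t\otimes\mu_t$ into its particle--particle, particle--background and background--background parts, and separating diagonal from off-diagonal contributions in $\sum_{i,j}$, I would obtain
\be
\iint g(z-z')\,\dd \mu_t(z)\,\dd \mu_t(z')
\;=\; 2\,\H_\La^\eta(\omega(t)) \;+\; \#\omega\cdot g^{\eta,\eta}(o),
\ee
where the factor $2$ matches the symmetric sum $\sum_{i\neq j}$ against the Hamiltonian's $\sum_{i<j}$ and the corresponding $1/2$'s in front of the background terms, while $\#\omega\cdot g^{\eta,\eta}(o)$ collects the diagonal self-energies arising from $i=j$. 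Symmetry $g(z)=g(-z)$ (a consequence of \eqref{kernel_expansion}) is used to combine the two identical cross terms.

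\medskip

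\noindent \emph{Step 3 (time integration).} Combining the two steps and integrating in $t$ against $\frac{1}{2\beta}$ yields the identity \eqref{splitEq}; the $t$-dependence enters only through $\omega(t)$ and the dominated convergence needed to pass the integral inside is straightforward since the Brownian bridges are a.s.\ continuous in $t$.

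\medskip

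\noindent The main obstacle, and the reason this is relegated to the appendix, is the rigorous justification of the integration by parts on the unbounded cylindrical domain $S=\R\times D$: one must verify that the boundary flux through $\{x=\pm L\}\times D$ vanishes as $L\to\infty$, which relies critically on charge neutrality of $\mu_t$ together with the explicit decay structure of the Green's function encoded in \eqref{kernel_expansion}. Once that is in place, the rest of the argument is an algebraic expansion.
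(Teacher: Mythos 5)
Your proof takes essentially the same route as the paper's: integration by parts converts $\int_S |\nabla V_t|^2$ into $\iint g(z-z')\,\dd\mu_t(z)\,\dd\mu_t(z')$ (with the boundary flux vanishing by neutrality together with the decay encoded in \eqref{kernel_expansion}), and the double integral is then expanded, the diagonal is isolated, and the result is integrated in $t$. One small accounting remark, which applies to your Step 3 and to the paper alike: your Step 2 correctly gives $\iint g\,\dd\mu_t\,\dd\mu_t = 2\H^\eta_\La(\omega(t)) + N\,g^{\eta,\eta}(o)$, so dividing by $2$ and averaging over $t$ produces a self-energy term $\tfrac{N}{2}g^{\eta,\eta}(o)$ rather than $N\,g^{\eta,\eta}(o)$ as written in \eqref{splitEq}; since this constant is absorbed into the normalizing factor $e^{N\beta g^{\eta,\eta}(o)}/Z_{N,\beta}$ in \eqref{fieldProbEq}, the discrepancy is harmless, but if you want your Step 3 to literally "yield \eqref{splitEq}" you should track this prefactor.
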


Relying on Lemma \ref{splitLem}, the expected energy per unit length along the $x$-direction for a stationary bridge process $P\in \Ps$ is now defined in two steps. Proceeding na\"ively, one could define the energy content of a finite window $\La_N$ by minimizing over all possible boundary conditions and then taking the expectation and the limit over an increasing window size. Minimizing over boundary conditions is not necessarily natural from a physics point of view but it turns out to be technically convenient. In statistical mechanics it is customary to impose conditions like temperedness on boundary conditions~\cite{ruelle70} that give a handle on interactions between a bounded window $\Lambda_N$ and the outside $\Lambda_N^c$. In our context, the interactions we need to control are caused by bridges starting outside $\Lambda_N$ but invading $\Lambda_N$ deep inside. We impose the following conditions. For an interval $I = [L_-, L_+] \su \R$, we first define the {\it eroded} interval
$$I^- := \big[L_- + \big\lceil|I|^{7/8}\big\rceil, L_+ - \big\lceil |I|^{7/8} \big\rceil\big]$$
 and for $K = I \ti D$ we put $K^- = I^- \ti D$. Then, 
\be \label{eq:thetadef}
 \Th(K) := \big\{\CC\in\Conf:\, b\in \CC \text{ and }b(0)\not\in K\text{ implies }b \cap K^- = \es\big\}, 
\ee
{with $b\cap K^- = \emptyset$ a shorthand for $\{b(t):\ t\in [0,\beta]\}\cap K^- = \emptyset$,}
consists of all configurations $\CC \in \Conf$ for which any bridge in $\CC$ with starting point outside of $K$, stays outside of $K^-$ for all times.

{\begin{remark}[Choice of exponents]\label{rem:exponent choice}
	The exact choice of the exponent $7/8$ above is not important. We have chosen this value to allow for the proof of Proposition \ref{rateFunProp}, see Eq. \eqref{eq:exponent choice}, as well as for reasonable choices of parameters later, see Remark \ref{rem:reasonable choices}. The exponents in Lemma \ref{lem:entropy-estim} below are also chosen with Eq. \eqref{eq:exponent choice} in mind. 
\end{remark}}

In the following, $\E_P$ denotes the expectation with respect to $P \in \Pu$.

\begin{definition}[Expected specific energy]
	\label{specEnDef}
The \emph{expected specific energy} of $P\in \Ps$ {with $\mathsf{ent}(P)<\infty$} is {defined, whenever this limit exists,} as
	\begin{equation}
		\label{wDef}
		\W_\b(P) := \lim_{M \to \ff}\W_\b^M(P),
	\end{equation}
where 
	\begin{equation}
		\label{wDef1}
		\W_\b^M(P) := \lim_{N \to \ff} \, \E_P\big[(N^{-1}\tH_{\b, \La_N}(\CC)) \wedge M\big], 
	\end{equation}
	 and for $\CC\in \Conf$ and $K\su S$,
	\be \label{eq:wkdef}
			\tH_{\b, K}(\CC) := \inf_{\substack{\La \supset K, \tilde \CC\in \Neut(\La) \\ \t\CC\in \Th(K),\, {\t\CC_K = \CC_K}}} \Biggl\{\frac1{2\b} \int_{K\ti [0, \b]} \bigl|\nabla V_t(z, \t\CC, \La)|^2 \d(z, t) \, \Biggr\}.
	\ee
	The infimum considers only domains $\La$ of the form $\La = I \ti D$ for an interval $I \su \R$ with integer endpoints.
\end{definition}
\begin{remark}
By approximating arbitrary Brownian bridges by piecewise linear functions with rational interpolation points, we may replace the infimum in \eqref{eq:wkdef} by the infimum over a countable set. In particular, there are no issues with the measurability of  $\omega \mapsto \tilde H_{\beta, K}(\omega)$ when forming expectations. 
\end{remark}

Let us relate the definition of $\W_\b(P)$ to the analogous definition in \cite{serfInv}. Firstly, as mentioned above, we found it necessary due to the presence of highly fluctuating Brownian bridges to impose the boundary condition $\Th(K)$. Moreover, in \eqref{wDef} the order of forming the expectation and taking the limit is reversed. When working with bridges instead of just their endpoints, we found it difficult to control the asymptotic behavior of the specific energy of configurations in large volumes on the level of realizations. Loosely speaking, after forming expectations the effects of the fluctuation are less severe and a large-volume analysis becomes tractable.

\subsection{Main result: large deviation principle}\label{sec:main result}
Defining the \emph{free energy} as
\begin{align}\label{free energy}
	\mc F_\b(P) := \W_\b(P) + \b^{-1} \ent(P)
\end{align}
{if $P\in \mathcal P_\mathrm s$ and $\mathsf{ent}(P)<\infty$, and $\mathcal F_\beta (P)=\infty$ otherwise},
we now state the large deviation principle, where $\ol A$ and $A^\circ$ refer to the closure and interior with respect to the $\taub$-topology of a measurable set $A \su \Pu$.

\begin{theorem}[LDP for the empirical fields]
	\label{mainThm}
	The empirical fields {$\{\Emp(\CC)\}_{N \ge 1}$} under $\{\mun\}_{N \ge 1}$ satisfy the LDP at speed $N$ in the $\taub$-topology on $\Pu$ with {good rate function} 
	$$P \mapsto \b\mc F_\b(P) - \inf_{P' \in \Pu} \b\mc F_\b(P').$$
	That is, {for every measurable set $A\su \mc P$}, 
	$$\limsup_{N \to \ff}{\frac1N} \log \mun({\{{\CC_{\La_N}}:\,\Emp({\CC_{\La_N}}) \in A\}}) \le -\inf_{P \in {\ol A}}\big(\b\mc F_\b(P) - \inf_{P' \in \Pu} \b\mc F_\b(P')\big), $$
	and
	$$\liminf_{N \to \ff}{\frac1N} \log \mun({\{{\CC_{\La_N}}:\,\Emp({\CC_{\La_N}}) \in A \}}) \ge -\inf_{P \in {A^\circ}}\big(\b\mc F_\b(P) - \inf_{P' \in \Pu} \b\mc F_\b(P')\big)$$
\end{theorem}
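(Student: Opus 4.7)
I would follow the scheme of Lebl\'e--Serfaty \cite{serfInv}, adapting it to the Brownian bridge setting through the Feynman--Kac representation. The starting point is Lemma \ref{splitLem}: after completing each configuration to a charge-neutral one in $\Lambda_N$ (which happens automatically for $\omega$ with $\#\omega = N$), one has
$$\beta^{-1}\int_0^\beta \H^\eta_{\Lambda_N}(\omega(t))\,dt = H_{\beta,\Lambda_N}(\omega) - N g^{\eta,\eta}(o),$$
so the constant $N g^{\eta,\eta}(o)$ drops out after subtracting the minimum of the rate function, and the LDP reduces to an LDP for the field energy $H_{\beta,\Lambda_N}$ under the binomial-bridge reference measure $\P_N$. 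I would split the argument into the standard two bounds and an exponential tightness step adapted to the non-metrizable $\taub$-topology.

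For the upper bound, I would combine a Sanov-type estimate for the empirical field of a binomial Brownian-bridge process in the $\taub$-topology (which can be extracted in the Georgii--Zessin framework \cite{georgii2}) with a lower bound on the energy. The entropy piece gives $\limsup_N N^{-1}\log \P_N(\mathrm{Emp}(\omega)\in A)\le -\inf_{P\in \overline A}\mathsf{ent}(P)$ on measurable sets $A$. For the energy piece, for fixed $M$, I would observe that $N^{-1} H_{\beta,\Lambda_N}(\omega) \wedge M$ can be split into contributions on unit blocks, and by the definition of $\tilde H_{\beta,K}$ as an infimum over extensions with the $\Theta(K)$-invasion condition, an averaging over shifts together with the sub-additivity of minimizing over boundary extensions produces a lower bound $\E_{\mathrm{Emp}(\omega)}[\tilde H_{\beta,\Lambda_1}\wedge M]+o(1)$. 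Letting $M\to\infty$ and using lower semicontinuity of $\W_\beta^M$ on entropy sub-level sets yields the required upper bound.

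For the lower bound, the main work is a screening construction. Given a stationary $P$ with $\mathcal F_\beta(P)<\infty$ and a measurable $\taub$-neighborhood $U\in \mc U_{\text{meas}}(P)$ (i.e., a cylindrical set of the form \eqref{topology-basis}), I would partition $[0,N]\times D$ into blocks of length $R\gg 1$, sample an independent realization of $P$ restricted to each block, condition bridges starting near the block boundary to stay inside their block (a Brownian-bridge event of positive probability uniformly in $R$ provided bridges are sufficiently far from the edge, which is where the buffer of width $\lceil N^{7/8}\rceil$ in $\Theta$ is useful), and finally modify the configuration on a thin boundary region so as to (i) restore charge neutrality, (ii) screen the electric field on the boundary of each block so the energies become additive, and (iii) enforce the non-invasion condition $\Theta(\Lambda_N)$. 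The screening field is constructed time-slice by time-slice (yielding a time-dependent gradient still compatible with \eqref{splitEq}), while the entropy cost of the conditioning is controlled by the kind of Brownian-bridge entropy estimates hinted at by Lemma \ref{lem:entropy-estim}. After this construction, a Sanov lower bound and ergodic decomposition of $P$ upgrade the estimate from ergodic $P$ to general $P\in \Ps$.

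The main obstacle will be the screening step. Unlike \cite{serfInv}, where screening merely displaces point charges by a small amount, here each ``particle'' is a full Brownian bridge trajectory with Gaussian fluctuations of order $\sqrt{\beta}$, so preserving simultaneously (a) closeness of the empirical field under \emph{all} bounded local test functions (as required by $\taub$), (b) the approximate additivity of the field energy across blocks, (c) the non-invasion condition built into the definition of $\tilde H_{\beta,K}$, and (d) a $P_N$-probability cost of at most $e^{o(N)}$, requires a careful joint construction. I would handle it by conditioning bridges started within $N^{7/8}$ of a block boundary to remain in that block (the resulting entropy penalty is negligible at scale $N$ by the width of the buffer), then inserting or deleting $O(N^{7/8})$ additional bridges carrying corrective charge near the block interface, and then proving that the time-dependent screening field supported in a boundary layer of width $\ll R$ can absorb the residual electric field without significantly perturbing either the empirical field or the energy. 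The matching lower semicontinuity and upper semicontinuity statements for $\W_\beta$ on entropy sublevel sets, together with the standard exponential tightness argument via entropy bounds, then complete the LDP.
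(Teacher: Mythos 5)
Your plan identifies the right ingredients---the splitting formula from Lemma~\ref{splitLem}, a Sanov-type entropy estimate, an energy lower bound via shift averaging and the infimum in $\tilde H_{\beta,K}$, and a screening construction to get the lower bound---and the overall architecture matches the paper's. However, there are three places where your sketch diverges in ways that conceal real work or would need repair.

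First, for the upper bound you propose splitting $N^{-1}H_{\beta,\Lambda_N}\wedge M$ into \emph{unit} blocks and obtaining $\E_{\Emp(\omega)}[\tilde H_{\beta,\Lambda_1}\wedge M]+o(1)$. That cannot be right as stated: the rate function involves $\W_\beta^M(P)=\lim_{N\to\infty}\E_P[(N^{-1}\tilde H_{\beta,\Lambda_N})\wedge M]$, which is not controlled by the unit-block functional. The paper instead averages over shifts into $R$-blocks for fixed but large $R$ (Lemma~\ref{semContLem}), giving $N^{-1}H_{\Lambda_N}(\omega)\gtrsim \W^M_R(\Emp(\omega))$ up to the invasion error term, and then sends $R\to\infty$ \emph{after} $N\to\infty$. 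This forces one to prove that the limit defining $\W^M_\beta$ exists (Proposition~\ref{superAddLem}, via an almost-super-additivity argument using the $\Theta(K)$ erosion and the $x$-range entropy bound) and that $\W^M_\beta$ is lower semicontinuous on entropy sublevel sets (Proposition~\ref{rateFunProp}). Your ``$+o(1)$'' is hiding both facts, which in the Brownian-bridge setting are not automatic because bridges crossing block boundaries destroy strict super-additivity.

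Second, you invoke ``the standard exponential tightness argument via entropy bounds.'' The paper deliberately avoids this: the $\sigma$-algebra on $\mc P$ is strictly smaller than the Borel $\sigma$-algebra of $\taub$, so compactness of rate-function level sets does not imply exponential tightness (see the discussion after~\eqref{topology-basis}). The upper bound is instead obtained by covering $\overline A\cap K_\alpha$ (with $K_\alpha$ the compact entropy sublevel set) by finitely many measurable basic neighborhoods on which Lemma~\ref{lem:lscersatz} applies, and discarding $K_\alpha^c$ by an entropy tail estimate. If you insist on exponential tightness you must specify in which sense, and a measurable exhaustion of $\Pu$ by compacts is not readily available here.

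Third, for the lower bound you plan to sample $P$ directly on blocks, condition the bridges to stay inside their block, and then upgrade from ergodic to general $P$ by ergodic decomposition. The paper takes a genuinely different route: it builds a coupling $\Pc$ on $\Conf(\Lambda_R)\times\Neut(\Lambda_R)$ whose first marginal is the Poisson reference $\Pois_{\Lambda_R}$ and whose second is the Binomial bridge process, shows that with probability at least $e^{-c\delta R}$ the Binomial sample agrees with the screened Poisson configuration $\CC^{\mathsf{scr}}$, and applies Sanov's theorem to the block average of the Poisson marginal (Eq.~\eqref{sanovEq}). This avoids both the change-of-measure computation and any ergodic decomposition, and it localizes the entropy cost and the energy estimate in a clean way. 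Your approach could be made to work, but the coupling device is not optional decoration: it is what lets one quantify the probability cost of being close to the screened configuration while keeping the correct reference-measure marginals, and your sketch leaves this probability-cost estimate (your item (d)) entirely unaddressed. In particular, ``inserting or deleting $O(N^{7/8})$ additional bridges'' near block interfaces needs a definite probabilistic mechanism to control its cost against the screening gain, and that is precisely what Lemma~\ref{fcProbLem} supplies in the paper.
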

\tcb{An important byproduct of our proof in Section \ref{outlineSec}, is that we are also able to describe the asymptotics of the free energy $\log \E_N\big[ \exp\big(- \b H_{\La_N}(\CC) \big)\big]$ for large $N$, where the expectation is with respect to $\P_N$ defined above \eqref{def:BBmeasure}.}	

Having formulated the LDP in terms of Brownian bridges, we now translate it back to the quantum setting under Maxwell-Boltzmann statistics. 	Essentially, this means specializing Theorem \ref{mainThm} to the starting points of the bridges. After stating the result, we expound on possible implications of this result from the perspective of mathematical physics. 

{
	In the operator-theoretic setting, the quantum-mechanical Hamiltonian for the quasi-1D jellium becomes
	$$H_{\La_N}^\eta = -\frac12 \sum_{i \le N} \Delta_{z_i} + \sum_{i < j} g^{\eta, \eta}(z_i - z_j) - \sum_{i \le N}\int_\T \int_{\La_N} \gN(z - z')\d z\de^\eta_{z_i}(dz')
	+ \frac12 \iint_{\La_N^2} g(z - z') \d z \d z',$$
	which is an operator on the Hilbert space $L^2(S^N)$. (Recall that we use free boundary conditions in the $x$-direction and periodic boundary conditions in the $y$-directions.) The expected value of an observable $\mathfrak{a}$  (self-adjoint operator in $L^2(S^N)$) in the finite-volume Gibbs state is
	\begin{equation} \label{eq:quantum-to-classical}
		\langle \mathfrak{a}\rangle_{N,\beta} = \frac{\mathrm{Tr}\, \mathfrak{a} \exp( - \beta H_{\Lambda_N}^\eta)}{\mathrm{Tr}\,\exp( - \beta H_{\Lambda_N}^\eta)}. 
	\end{equation} 
	The Feynman-Kac formula implies that expected values of observables that depend on particle positions $z_1,\dots, z_N$ (but not on momenta) are completely determined by the distribution of the initial points of the Brownian bridges: If $\mathfrak{a}$ is a multiplication operator with some function $G_N(z_1,\dots, z_N)$, then
	$$
		\bigl \langle G_N(z_1,\dots, z_N)\bigr \rangle_{N,\beta}  = \int G_N\bigl(  b_1(0), \dots, b_N(0)\bigr) \mu_{N,\beta} (\mathrm d \boldsymbol{b}). 
	$$ 
	Among the position-dependent observables of interest are extensive two-body quantities 
	$$
		G_N(z_1,\dots,z_N) = \sum_{1\leq i < j \leq N} h(z_j - z_i)
	$$	
	with bounded and compactly supported $h$. To make the connection with the stationary empirical field, notice that 
	\begin{equation}\label{eq:observable}
		G_N(z_1,\dots,z_N) = \sum_{k=0}^{N-1} u\bigl( \{z_i-k\}_{i=1,\dots,N}\bigr)
	\end{equation} 
	with 
	$$
		u\bigl( \{z_i\}_{i=1,\dots,N}\bigr)	= \frac12 \sum_{i=1}^N \one_{[0,1)\times D}(z_i) \sum_{j\neq i} h(z_j- z_i). 
	$$
	The function $u$ is bounded by a constant times the number of particles in $[0,1)\times D$ and it only depends on the configuration in some neighborhood of that set because $h$ is compactly supported. Thus, $G_N/N$ is the integral of the local and tame function $u$ against the stationary empirical field of the point configuration $\{z_1,\dots, z_N\}$. Theorem~\ref{mainThm}, Eq.~\eqref{eq:quantum-to-classical} and the contraction principle yield a large deviation principle for all macroscopic observables of that type.
	}
	
	\begin{corollary} \label{mainCor}
	{
		Let $u$ be a local and tame function of point configurations on the strip, and $G_N$ as in~\eqref{eq:observable}. Define $I_u:\R\to \R\cup \{\infty\}$ by
		$$
			I_u(x):= \inf\Bigl\{ \beta \mathcal F_\beta(P): \ P\in \Ps,\, \mc F_\beta(P) < \ff,\,\int_\Conf u\bigl( \{b(0):\, b\in \omega\}\bigr) P(\d \omega) = x \Bigr\}.
		$$
		Then $I_u$ is convex with compact sublevel sets and for all measurable subsets $A\su\R$, 
		\begin{align*} 
			\limsup_{N\to \infty} \frac1N \log \bigl \langle \one_{\{G_N/N \in  A\}}\bigr \rangle_{N,\beta} &\leq - \inf_{x\in \overline{A}} \bigl( I_u(x) - \min I_u\bigr), \\
			\liminf_{N\to \infty} \frac1N \log \bigl \langle \one_{\{G_N/N \in  A\}}\bigr \rangle_{N,\beta} &\geq - \inf_{x\in A^\circ} \bigl( I_u(x) - \min I_u\bigr).
		\end{align*}}
	\end{corollary} 
\tcb{Very loosely speaking, one consequence of Corollary \ref{mainCor} is that with a high probability, the $G_N/N$-observable is asymptotically close to one of the minimizers of $I_u$. The constraint appearing in the definition of $I_u(x)$ means that the expected value of the real-valued observable $u$ under the probability measure $P$ should be given by $x$.}
 
LDPs have a long history and deep connection with statistical physics; recent examples of this connection include \cite{serfInv,  garcia2017large, liu2020large} and in particular, in the quantum setting \cite{schlein}. LDPs of the type in this work are fairly standard in classical statistical mechanics with short-range interactions. Let us briefly review a few relevant aspects. LDPs play an important role in clarifying foundational questions in statistical mechanics.  Firstly, as explained already by Lanford \cite{lanford1973} for Gibbs measures with finite-range interactions, when the rate function $I_g$ has a unique minimizer, the LDP implies that the equilibrium distribution of the intensive observable $G_N/N$ is sharply peaked around its mean value, as it should be in light of the statistical mechanics approach for thermodynamics.
 Of course the hard work then consists in actually proving that $I_g$ has a unique minimizer, a question intimately tied to uniqueness of Gibbs measures and absence of phase transitions. 
Secondly, the LDP for the stationary empirical field is closely related to the Gibbs variational principle \cite{rassoul2015course} and it can be used to investigate the equivalence of ensembles on the level of states \cite{georgii1994equivalence}.

{On a related note,
the proof of Corollary \ref{mainCor} provides a variational representation of the limiting free energy. A consequence is the existence of the thermodynamic limit for our quasi-1D quantum systems. While such existence should follow from standard methods, to the best of our knowledge a 
proof for the systems considered here has not yet appeared in the literature. Moreover, while subadditivity arguments only yield the mere existence of the limit, the variational representation adds the conceptual insight that it results from balancing well-defined contributions from specific energy and specific entropy of an infinite system. The standard subadditivity arguments are also no longer available for inhomogeneous systems, and the work of \cite{serfInv} illustrates that the LDP approach is flexible enough to extend to such a setting.}

{
In the quantum setting, large deviations are less well understood. Large deviation estimates for particle numbers in the quantum ideal weakly interacting gas, and quantum gas with Boltzmann statistics and repulsive interaction at all densities and temperatures were studied by Lebowitz, Lenci and Spohn \cite{lebowitz-lenci-spohn2000} and Gallavotti, Lebowitz and Mastropietro \cite{gallavotti-lebowitz-mastropietro2002}.
	For quantum lattice systems, large deviations for broader classes of observables are available, see Ogata and Rey-Bellet \cite{ogata-reybellet2011} and the references therein. Subtleties of quantum large deviations are also discussed by Neto{\v{c}}n{\'y} and Redig \cite{netocny-redig2004} and some remarks in relation with the asymptotic equipartition theorem are made by de Roeck, Maes and Neto{\v{c}}n{\'y} \cite{deroeck-maes-netocny2006}. Quantum large deviations have also attracted interest in the context of quantum statistics and quantum information theory, see for example the quantum versions of Shannon-McMillan and Sanov theorems \cite{bjelakovic-etal2004inventiones,bjelakovic-etal2005sanov}.
}

{None of the aforementioned references on quantum large deviations deal with long-range interactions or Coulomb systems. The long-range nature of Coulomb interactions creates technical difficulties even for classical systems as demonstrated by \cite{serfInv}. Our results constitute a first step towards quantum large deviations for Coulomb systems.}

{ 
	Finally, from a more probabilistic point of view, while a law of large numbers and a central limit theorem capture the typical behavior and the typical fluctuations of a physical system, the LDP allows to describe the configurations under rare events. In particular, the distribution under a rare event becomes accessible via the Gibbs conditioning principle \cite[Theorem 7.3.3]{dz98} and importance sampling is also made possible, for an application see \cite{ray2018importance}.}

{One last comment is that, although Theorem \ref{mainThm} is stated for the Coulomb potential, many of the arguments remain valid more generally. The essential ingredients in our arguments are bounds on the number of bridges crossing various boundaries and bounds on the change in energy due to bridges deviating far from their starting points. For instance, the recent results obtained in \cite{dereudre} pave the way towards extending Theorem \ref{mainThm} to the 1D log-potential.}

\subsection{Summary of contributions.} \label{sec:contributions}{The goal of the present work is to provide a first example for which the fine-scale large deviation analysis achieved in the classical setting in \cite{serfInv} may be extended to the quantum setting via Feynman-Kac formalism. However, as demonstrated by our work here, passing from particles to bridges is far from just a formal step, but instead requires substantial novel arguments to address technical challenges. We now highlight the main novel contributions briefly, and provide a more detailed discussion in the corresponding subsections of the manuscript. 
\begin{itemize}
\item  When  using configurations of bridges instead of particles, one needs to use a stronger topology as discussed in Section \ref{sec:topology}. The natural topology in our setting, taken from \cite{georgii2}, is neither metrizable nor separable,  and thus the analog of various arguments in \cite{serfInv} become more technical.
	\im Related to the above, in the classical setting, a crucial ingredient in the proof of the LDP is a fine coercivity property from \cite[Lemma 4.2]{rouSe}. Loosely speaking, starting from point configurations and electric fields in bounded windows, the lemma allows to pass to a subsequence that converges to a deterministic limiting configuration in the entire domain. In the setting of bridges such a subsequence would need to be produced through the Arzel\`a-Ascoli theorem, which is difficult to apply due to the lack of equicontinuity of the bridges. \tcb{Although we are still using coercivity properties of the electric energy to derive energy bounds, these arguments do not give full compactness because of the extra bridge degree of freedom.}		Therefore, we develop a different route and rely on compactness results in the space $\mathcal P$ itself. 
\im The energy of configurations in the classical setting was defined by taking the infimum of all possible electric fields that are compatible with a given particle configuration \cite[Lemma 2.3]{serfInv}. One difficulty in our setting, is that in any given region of space,  Brownian bridges that start in that region, may leave and re-enter. Thus the total charge of the region is not even constant in time.  In particular, taking an unconstrained minimum would result in a pathological energy definition. In order to define the energy of bridge configurations, it is necessary to constrain the minimization problem with carefully chosen boundary conditions as we do in \eqref{eq:wkdef}.
\im The Brownian bridges could potentially be subject to extreme fluctuations, particularly in the setting of large deviations, and thereby prevent a screening construction that can be carried out consistently over the entire time horizon.  To deal with this issue, we introduce concepts of $\e$-dense abscissas (Definition~\ref{def:dense abs}) and $(M, \e)$-regular configurations (Definition~\ref{def:regularity}) and show that irregular configurations are so rare that they do not interfere with the screening construction. In particular, both Section \ref{sec:energy entropy} and Section \ref{subsec:boundary} are devoted to dealing with issues that arise due to the fluctuations of bridges, and they have no analog in previous works.
\im In Section \ref{sec:coupling}, we show that relying on the probabilistic concept of a coupling allows to shorten many of the volume computations from \cite{serfInv}.
\end{itemize}
}

\section{Top-level proof of the main result}
\label{outlineSec}

To prove Theorem \ref{mainThm}, we proceed in three steps. First, in Section \ref{sec:lowersemicont}, we show that the specific energy $\W_\b$ is lower semicontinuous. {Due to the quantum fluctuations introduced by the presence of Brownian bridges, the absence of super-additivity makes this step more difficult than one might expect.} Second, in Section \ref{upSec}, we establish the LDP upper bound. Third, in Section \ref{lowSec}, we outline how the LDP lower bound reduces to finding configurations of low energy. The construction of such configurations relies on the screening idea from \cite{serfInv, petrache, rouSe, ginzburg, sandSerf} and comprises the bulk of this paper, Sections \ref{sec:energy entropy}, \ref{sec:screening}, and \ref{sec:quasicontinuity}. In Appendix \ref{sec:aux}, we \tcb{prove a result expressing the energy of a configuration in terms of the electric field}. {A common motif throughout the manuscript is that the additional fluctuations coming from the Brownian bridges should not be seen as a simple model extension, since some of the arguments that were immediate in \cite{serfInv}  require  more attention and new ideas.}

\label{lscSec}
In the sequel, to simplify notation, we drop the $\b$-dependence and write
\begin{align*}
	H_\La(\CC) &\equiv H_{\b, \La}(\CC), &\tH_\La(\CC) &\equiv \tH_{\b, \La}(\CC),\\
	\mc \W(P) &\equiv \W_\b(P), &\W^M(P) &\equiv \W^M_\b(P),\\
	&\qquad\text{ and }&\mc F&\equiv \mc F_\b.
\end{align*} 

\subsection{Lower semicontinuity}\label{sec:lowersemicont}

Our main results of this subsection are

\begin{proposition}[Existence of the energy density]
	\label{superAddLem}
	If $\ent(P) < \ff$, then the limit \eqref{wDef1} exists.
\end{proposition}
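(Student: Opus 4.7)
The plan is to establish the existence of the limit by showing that the sequence
\begin{equation*}
b_N := \E_P\bigl[\tH_{\beta, \Lambda_N}(\CC) \wedge (MN)\bigr] = N\,\E_P\bigl[(N^{-1}\tH_{\beta, \Lambda_N}(\CC))\wedge M\bigr]
\end{equation*}
is \emph{approximately subadditive}, i.e.\ $b_{N_1+N_2} \le b_{N_1} + b_{N_2} + c\,N^{7/8}$ for $N = N_1 + N_2$ large. Combined with $0 \le b_N \le MN$, a Fekete-type lemma for subadditive sequences with sublinear corrections (e.g.\ de Bruijn--Erd\H{o}s) then yields the convergence of $b_N/N$ to the desired limit $\W^M(P)$. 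Stationarity of $P$ under integer shifts is what allows us to identify $b_{N_2}$ with the expectation of $\tH_{\beta, \Lambda_{N_2}^{+N_1}}$ where $\Lambda_{N_2}^{+N_1}:=[N_1,N_1+N_2]\times D$.

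First I would fix a configuration $\CC$ and, for each $i=1,2$, near-optimal admissible extensions $\tilde\CC^{(i)}$ attaining the infimum defining $\tH_{\beta,\Lambda_{N_1}}(\CC)$ and $\tH_{\beta,\Lambda_{N_2}^{+N_1}}(\CC)$ up to some small $\varepsilon$; thanks to the truncation, one may also assume both extensions have energy at most $M N_i + 1$, which provides a priori $L^2$-bounds on their electric fields. The naive candidate extension for $\Lambda_N$ obtained by gluing $\tilde\CC^{(1)}$ and $\tilde\CC^{(2)}$ across $\{x=N_1\}$ generically violates the $\Theta(\Lambda_N)$ boundary condition, as bridges of $\tilde\CC^{(2)}$ starting in $\Lambda_{N_2}^{+N_1}$ may invade $\Lambda_N^-$ during $[0,\beta]$, and symmetrically for $\tilde\CC^{(1)}$. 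The remedy is to introduce a buffer zone $\mathrm{Buf} := [N_1 - n, N_1 + n] \times D$ of half-width $n = \lceil N^{7/8}\rceil$, remove any bridge crossing the interface $\{x=N_1\}$ or starting inside $\mathrm{Buf}$, and replace the removed charges by short bridges contained entirely within $\mathrm{Buf}$ chosen to restore charge-neutrality in each half of $\Lambda_N$. This amounts to a local, simplified version of the screening construction developed later in the paper. The exponent $7/8$ in the erosion $\Lambda_N^-$ is calibrated precisely so that $n$ exceeds typical bridge fluctuations of order $\sqrt{\beta}$ while remaining $o(N)$, so that the buffer is wide enough to absorb the surgery but thin enough to contribute negligibly to the density.

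The hard part will be estimating the extra electrostatic energy generated by this surgery. Given the a priori $L^2$-bound on the electric fields of $\tilde\CC^{(i)}$, standard capacity estimates (analogous to those underlying the Sandier--Serfaty screening) show that the extra energy from the $O(n)$ replacement charges smeared at scale $\eta$ inside $\mathrm{Buf}$ is $O(n)$ pointwise in $\CC$. To pass to expectations, the finite-entropy assumption $\mathsf{ent}(P) < \infty$ is used via the entropy inequality applied to the number of starting points of $\CC$ in $\mathrm{Buf}$ and to the number of interface-crossing bridges; comparison with the reference Poisson--Brownian-bridge measure $\Pois$, which has unit intensity, yields $O(n)$ expected bounds for both. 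Integrating the resulting pointwise subadditive inequality against $P$ and letting $\varepsilon \downarrow 0$ produces the approximate subadditivity $b_{N_1+N_2} \le b_{N_1} + b_{N_2} + c\,N^{7/8}$, whence Fekete's lemma yields the existence of $\W^M(P)$.
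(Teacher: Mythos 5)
Your proposed route goes in the opposite direction from the paper's, and this is not a harmless cosmetic difference: it runs into a genuine obstacle. The paper proves the convergence in \eqref{wDef1} via an \emph{almost super-additive} inequality (Eq.~\eqref{supAddEq}): overlapping shifted windows $K_j\subset \La_{N^-}$ of length $N^+$ give, upon restricting a near-optimal field for $\tH_{\La_{N^-}}(\CC)$ to each $K_j$, competitors for $\tH_{K_j}(\CC)$, with the only defect being the boundary condition $\Theta(K_j)$; Lemmas~\ref{fubRangeLem} and~\ref{lem:entropy-estim} show the resulting error is sublinear. Super-additivity is the natural direction here precisely because \emph{restriction} of a field compatible with $\CC$ on $\La_{N^-}$ automatically yields fields compatible with $\CC$ on each sub-window, with nothing to glue.

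Your sub-additive strategy, by contrast, asks to produce a competitor for $\tH_{\La_N}(\CC)$ out of near-optimal extensions $\tilde\CC^{(1)}, \tilde\CC^{(2)}$ for the two halves, and the step that makes this plausible — removing bridges near the interface $\{x=N_1\}$ and re-inserting replacement charges in a buffer $[N_1-n, N_1+n]\times D$ — is inadmissible. The infimum defining $\tH_{\La_N}(\CC)$ in \eqref{eq:wkdef} is constrained to competitors $\t\CC$ with $\t\CC_{\La_N}=\CC_{\La_N}$, and the buffer lies \emph{inside} $\La_N$; you are not allowed to alter $\CC$ there. (Screening in Section~\ref{sec:screening} is used for the LDP lower bound, where one is free to pick the configuration; here $\CC$ is given.) A second, related issue: the extension $\tilde\CC^{(1)}$ is constrained to equal $\CC$ only on $\La_{N_1}$, so its charges in $\La_{N_2}^{+N_1}$ generally disagree with $\CC_{\La_{N_2}^{+N_1}}$; the field $\nabla V(\cdot,\tilde\CC^{(1)},\La^{(1)})$ inside $\La_{N_1}$ therefore feels different nearby charges than the field of any admissible competitor for $\tH_{\La_N}(\CC)$, and this discrepancy is not confined to an $O(N^{7/8})$ buffer. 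If you want to salvage a monotone-limit argument you should switch to the paper's super-additive comparison, which avoids gluing altogether.
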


\begin{proposition}[Lower semicontinuity of free energy]
	\label{rateFunProp}
The free energy $\mc F$ in \eqref{free energy} is lower semicontinuous.
\end{proposition}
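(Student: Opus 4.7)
The plan is to decompose $\mathcal F = \mathcal W + \beta^{-1}\mathsf{ent}$ and establish lower semicontinuity of each summand in the $\tau_{\mathcal L_b}$-topology; their sum is then lsc. Lower semicontinuity of the specific relative entropy against the marked Poissonian reference $\mathsf{Pois}$ is a classical result for marked point processes, for which I would cite the framework of \cite{georgii2}. The stationary subclass $\mathcal P_s\subset \mathcal P$ is $\tau_{\mathcal L_b}$-closed because stationarity is encoded by identities among expectations of bounded local functions, so extending $\mathsf{ent}(P)=+\infty$ off $\mathcal P_s$ preserves lower semicontinuity globally.

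For the energy, I would use the identity $\mathcal W = \sup_M \mathcal W^M$, which holds because $M\mapsto \mathcal W^M$ is non-decreasing. Since a supremum of lsc functions is lsc, it suffices to treat each $\mathcal W^M$. For fixed $M$ and $N$ the function $f_N^M(\omega) := (N^{-1}\tilde H_{\Lambda_N}(\omega))\wedge M$ is bounded by $M$ and $\Lambda_N$-local: in the definition \eqref{eq:wkdef} only the constraint $\tilde\omega_{\Lambda_N}=\omega_{\Lambda_N}$ couples $\tilde H_{\Lambda_N}(\omega)$ to $\omega$, so its value depends on $\omega$ only through $\omega_{\Lambda_N}$. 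Measurability follows from the countable-infimum remark after Definition \ref{specEnDef}. Consequently, $P\mapsto \mathbb E_P[f_N^M]$ is $\tau_{\mathcal L_b}$-continuous.

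The key step is then to argue that the limit $\mathcal W^M(P)=\lim_N \mathbb E_P[f_N^M]$ is \emph{uniform} on every entropy sublevel set $\{\mathsf{ent}\le c\}$. Granting this, $\mathcal W^M$ is on each such sublevel set a uniform limit of $\tau_{\mathcal L_b}$-continuous functions, hence continuous there; because $\{\mathcal F<\infty\}\subset \bigcup_{c}\{\mathsf{ent}\le c\}$ and each $\{\mathsf{ent}\le c\}$ is closed by lsc of entropy, this promotes $\mathcal W^M$ to a globally lsc function on $\mathcal P$. To establish the uniform convergence I would refine the reasoning underlying Proposition \ref{superAddLem} into an almost-subadditivity estimate of the form
\[
\mathbb E_P\bigl[\tilde H_{\Lambda_{N+N'}}\bigr]\le \mathbb E_P\bigl[\tilde H_{\Lambda_N}\bigr]+\mathbb E_P\bigl[\tilde H_{\Lambda_{N'}}\bigr]+R_{N,N'}(P),
\]
with a correction $R_{N,N'}(P)=o(N+N')$ that is uniform over $P$ satisfying $\mathsf{ent}(P)\le c$. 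One produces this bound by splicing together the near-optimizing extensions for the two subwindows, then applying the splitting identity of Lemma \ref{splitLem} to estimate the $L^2$-field-energy leaking across the partition boundary; Fekete's lemma converts the almost-subadditivity into a quantitative convergence rate.

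The principal obstacle is precisely this uniform control. In the classical particle setting of \cite{serfInv} cross-boundary field interactions admit direct geometric estimates since charges are localized, whereas here the Brownian bridges can wander arbitrarily far across window boundaries. This is exactly what the boundary condition $\Theta(\Lambda_N)$ in Definition \ref{specEnDef} is designed to handle: foreign bridges are forced to avoid the eroded core $\Lambda_N^-$, so their field contribution inside $\Lambda_N$ must traverse the buffer of width $\lceil N^{7/8}\rceil$. An entropy-type estimate bounding the $P$-probability of rare configurations violating this geometric restriction (as in Lemma \ref{lem:entropy-estim}) converts the buffer geometry into the quantitative uniformity of $R_{N,N'}(P)$. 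Combining the two lsc claims yields lower semicontinuity of $\mathcal F$ and completes the proof.
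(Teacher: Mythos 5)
You have the right building blocks and your strategy closely tracks the paper's, but there are two real issues, one of which is substantive.

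First, the ``promotion'' step does not work as stated. Your plan shows that $\mathcal W^M$ is continuous on each closed sublevel set $\{\ent \le c\}$ (assuming the uniform convergence there, which is plausible), and then asserts that this yields lower semicontinuity of $\mathcal W^M$ on all of $\mathcal P$. That inference is invalid: lower semicontinuity at a point $P$ with $\ent(P)<\infty$ must control sequences $P_n\to P$ along which $\ent(P_n)\to\infty$, and for such sequences no single sublevel set contains a tail, so sublevel-set continuity says nothing about $\liminf_n \mathcal W^M(P_n)$. Boundedness and nonnegativity of $\mathcal W^M$ do not rescue this. The paper circumvents exactly this difficulty by proving lower semicontinuity of $\mathcal F^M = \mathcal W^M + \beta^{-1}\ent$ directly rather than of $\mathcal W^M$ alone: because $\mathcal F^M(P)\le M + \ent(P) =: \alpha$ and $\mathcal F^M(P')\ge \beta^{-1}\ent(P')$, the infimum defining lower semicontinuity may be restricted to $\{\ent(P')\le\alpha\}$ with no loss, and only there is the uniform-in-$P'$ convergence needed. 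Your decoupling of energy and entropy into two separate lsc statements discards the very coupling that makes the argument go through.

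Second, the almost-subadditivity you propose has the wrong sign. You write
\[
\mathbb E_P\bigl[\tilde H_{\Lambda_{N+N'}}\bigr]\le \mathbb E_P\bigl[\tilde H_{\Lambda_N}\bigr]+\mathbb E_P\bigl[\tilde H_{\Lambda_{N'}}\bigr]+R_{N,N'}(P),
\]
which, after normalization, would make $f_N(P):=\mathbb E_P[W^M_N]$ approximately non\emph{in}creasing and hence the limit an infimum of continuous functions, which is upper (not lower) semicontinuous. What is actually needed, and what \eqref{supAddEq} provides, is the opposite: super-additivity per unit length, i.e.\ for $N\le N'$,
\[
\bigl(1 - 4(N')^{-1/8}\bigr)\,f_N(P') \le f_{N'}(P') + M\,\E_{P'_{\Lambda_{N'}}}\Bigl[\sum_{b\in\omega'}\bigl(\psi(b)-N^{7/8}+2\bigr)_+\Bigr],
\]
which makes $f_N$ approximately nondecreasing, so the limit behaves like a supremum of continuous functions. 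Your intuition about the role of $\Theta(\Lambda_N)$, the eroded core, and Lemma \ref{lem:entropy-estim} for controlling the correction term is correct; it is the direction of the inequality and the object whose semicontinuity you prove that need to be fixed.
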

	{In order to prove these propositions, we will leverage that $\tH_{\La_N}$ over increasing regions $\{\La_N\}_N$ is `close' to being super-additive. 	The biggest obstacle to super-additivity is the possibility of configurations having many highly fluctuating Brownian bridges which cross the boundaries of the regions $\{\La_N\}$. \tcb{For a bridge $b = \bigl(b(t)\bigr)_{t \le \b}$, let us denote by $b_x(t)$ and $b_y(t)$, the spatial coordinates $x\in\R$ and $y\in D$, respectively, of the Brownian bridge at time $t$.}
We first present two lemmas which will make use of the following $x$-{\it range} functional of a bridge $b$ seen from its starting point:
\be \label{def:xrangefunctional}
\psi(b):= \max_{t\le \b} \big|b_x(t) - b_x(0)\big|.
\ee
\tcb{This quantity  is one of the main players in the proof of our main result. In particular, we stress that the bound $-\log (\mathbb P(|\psi(b)| > r )) \approx cr^2$ will be the key to handling fast Brownian bridges.}
The following two lemmas provide bounds on the configurations which are the worst offenders in crossing boundaries.}
\begin{lemma}[$\Th$ contains most configurations]
	\label{fubRangeLem}
	Let \tcb{$k \ge 1$} and $I_1, \dots, I_k \su \R$ be distinct intervals with integer endpoints and of length $R \ge 1$. 
	For every $\CC \in \Conf(\La_N)$,
	$$\#\{i \le k:\, \CC \not \in \Th({I_i} \ti D)\} \le 2\sum_{b \in \CC} \big(\psi(b) - R^{7/8} + 2\big)_+.$$
\end{lemma}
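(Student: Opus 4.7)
The plan is to reduce the count to a per-bridge count: by definition of $\Th$, if $\CC \notin \Th(I_i \ti D)$ then some bridge $b \in \CC$ witnesses the violation, meaning $b(0) \notin I_i \ti D$ and $b \cap (I_i^- \ti D) \neq \es$. Consequently
$$\#\{i \le k:\ \CC \notin \Th(I_i \ti D)\} \le \sum_{b \in \CC} \#\{i \le k:\ b \text{ witnesses a violation of } I_i\},$$
and the task reduces to bounding, for a single bridge $b$, how many of the intervals $I_i$ it can violate by itself.

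Fix $b$ and note that, since $D$ is the entire cross-section, both $b(0) \notin I_i \ti D$ and $b \cap (I_i^- \ti D) \neq \es$ are properties of the $x$-component of $b$ alone. Write $I_i = [L_-^{(i)}, L_-^{(i)} + R]$ with $L_-^{(i)} \in \Z$ and consider first those $I_i$ with $b_x(0) < L_-^{(i)}$ (the case $b_x(0) > L_+^{(i)}$ is symmetric). Reaching $I_i^- = [L_-^{(i)} + \lceil R^{7/8} \rceil, L_+^{(i)} - \lceil R^{7/8} \rceil]$ from $b_x(0) < L_-^{(i)}$ requires
$$L_-^{(i)} \in \bigl(b_x(0),\ b_x(0) + \psi(b) - \lceil R^{7/8} \rceil \bigr].$$
Because the $I_i$ are distinct intervals of the same length $R$, their left endpoints $L_-^{(i)}$ are pairwise distinct integers, so the number of left-side violations is at most the number of integers in a real interval of length $\psi(b) - \lceil R^{7/8} \rceil$, which is bounded by $(\psi(b) - R^{7/8} + 2)_+$. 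Doubling to absorb the symmetric right-side case and summing over $b \in \CC$ yields the claim.

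The only care required is the rounding from $R^{7/8}$ to $\lceil R^{7/8} \rceil$ and the edge case $\psi(b) < \lceil R^{7/8} \rceil$, in which $b$ cannot violate any interval and the positive-part functional is already zero. The slack of $+2$ in the statement is tailored precisely to absorb both issues, so once the two sides are separated cleanly the argument is essentially a one-line counting estimate and there is no substantive obstacle.
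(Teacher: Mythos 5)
Your argument is correct and matches the paper's proof: both exchange the sum over intervals with a sum over bridges and then, for a fixed bridge, count the integers (equivalently the shifts $i$ in the paper's notation $\th_i(\La_R)$) that a bridge of $x$-range $\psi(b)$ can violate from the left, with the symmetric contribution for the right. The $+2$ slack is indeed just there to cleanly absorb the rounding and endpoint counting, as you note.
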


\begin{lemma}[Bound on Brownian bridge $x$-ranges] \label{lem:entropy-estim}
	Let $P\in \mc P_s$ with $\ent(P) < \ff$. 
	{There exists $c > 0$, not depending on $P$, such that for all $\z > 1$ 
	\be
	\sup_{\substack{\La \su S \\ |\La| > 0}}	\frac1{|\La|}\E_{P_\La}\Big[ \sum_{b \in \CC}(\psi(b)^{7/6} - \z)_+\Big] 
	\le (c+\ent(P))\z^{-5/8}.
\ee}
\end{lemma}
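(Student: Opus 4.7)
The plan is to apply the Gibbs variational inequality against the Poissonian reference $\Pois$, exploiting the Gaussian tail of $\psi(B)$ under a Brownian bridge marginal (reflection principle). For any $t>0$ and measurable $\Lambda\subset S$,
\begin{equation*}
	t\,\E_{P_\Lambda}\!\Big[\sum_{b}(\psi(b)^{7/6}-\z)_+\Big]\le H(P_\Lambda\mid\Pois_\Lambda)+\log\E_{\Pois_\Lambda}\!\Big[e^{t\sum_b(\psi(b)^{7/6}-\z)_+}\Big],
\end{equation*}
and since under $\Pois$ the points form a unit-intensity Poisson field with independent Brownian-bridge marks, the log-MGF factorizes as $|\Lambda|\bigl(\E_\BB[e^{t(\psi(B)^{7/6}-\z)_+}]-1\bigr)$. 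The problem is thus reduced to a single-bridge MGF estimate plus a specific-entropy bound.

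Applied to the $x$-coordinate of a standard Brownian bridge $B$ of time-length $\b$, the reflection principle yields $\P(\psi(B)>s)\le 2e^{-2s^2/\b}$, whence $\P(\psi(B)^{7/6}>\z+u)\le 2\exp(-2(\z+u)^{12/7}/\b)$ for $u\ge 0$. Using the convexity estimate $(\z+u)^{12/7}\ge \z^{12/7}+\tfrac{12}{7}\z^{5/7}u$ and setting $t:=\kappa\z^{5/7}$ for a small $\kappa=\kappa(\b)>0$, a direct integration gives
\begin{equation*}
	\E_\BB\!\big[e^{t(\psi(B)^{7/6}-\z)_+}\big]-1\le C\exp(-c\z^{12/7})
\end{equation*}
for constants $c,C$ depending only on $\b$, which is much smaller than any negative power of $\z$.

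For the entropy term, superadditivity of $\Lambda\mapsto H(P_\Lambda\mid\Pois_\Lambda)$ under disjoint unions (the Poisson reference being independent across disjoint windows) together with $\theta_1$-stationarity of $P$ and Fekete's lemma yield $H(P_{\Lambda_N}\mid\Pois_{\Lambda_N})\le N\,\ent(P)$ for every integer $N\ge 1$. For a general measurable $\Lambda$ with $|\Lambda|\ge 1$, covering by integer-aligned unit cells together with monotonicity of relative entropy under the projection $\CC\mapsto\CC_\Lambda$ upgrades this to $H(P_\Lambda\mid\Pois_\Lambda)\le C'|\Lambda|\,\ent(P)$. Combining with the MGF bound and dividing by $t|\Lambda|$ gives
\begin{equation*}
	\frac{1}{|\Lambda|}\E_{P_\Lambda}\!\Big[\sum_b (\psi(b)^{7/6}-\z)_+\Big]\le (c_0+\ent(P))\z^{-5/7}\le (c_0+\ent(P))\z^{-5/8},
\end{equation*}
the last inequality using $\z>1$ and $5/7>5/8$.

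\textbf{Main obstacle.} The delicate point is uniformity in the shape of $\Lambda$, particularly for small $|\Lambda|<1$ where the $|\Lambda|^{-1}$ normalization could amplify entropy fluctuations and $P$'s integer-shift stationarity does not suffice to reduce matters directly to a single reference window. Resolving this requires an additional ingredient---most naturally a uniform Palm-intensity bound derived from the same entropy inequality applied to the counting functional $\#\CC_{\Lambda_1}$---which rules out pointwise density spikes of the Campbell measure of $P$ and closes the argument.
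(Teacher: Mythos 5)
Your proposal follows the same route as the paper: Donsker--Varadhan duality against the Poissonian reference, factorization of the log-MGF for an independently marked Poisson process, a Gaussian tail bound for $\psi(B)$, and the bound $\ent_\La(P_\La)\le |\La|\,\ent(P)$. The one place you genuinely deviate is the single-bridge MGF estimate: you bound it by direct integration of the tail, while the paper goes through Cauchy--Schwarz combined with a Laplace-principle estimate for $\E_\BB[e^{2a\psi(B)^{7/6}}]$; your route is cleaner and, in principle, tighter. Two remarks on details.

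First, your choice $t=\kappa\z^{5/7}$ forces $\kappa<\tfrac{24}{7\beta}$ so that the exponential integral converges, and then dividing by $t$ yields $\tfrac{1}{\kappa}\,\ent(P)\,\z^{-5/7}$ --- a multiplicative factor $1/\kappa$ on the entropy rather than the additive constant $(c+\ent(P))$ claimed in the statement. This is easily repaired: take $t=\z^{5/8}$ (the paper's choice) in your direct-integration argument. Since $\z^{5/8}\ll\z^{5/7}$, the condition $t<\tfrac{24}{7\beta}\z^{5/7}$ holds automatically for $\z>1$ (after enlarging $c$ to absorb finitely many small $\z$), the integral produces a factor bounded uniformly in $\z$, and one lands exactly on $(c+\ent(P))\z^{-5/8}$. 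The paper's sub-optimal exponent $5/8$ is thus not a loss but a convenient choice that makes the constant structure come out clean.

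Second, the ``main obstacle'' you flag does not arise. The supremum in the lemma is taken only over boxes $\La=[L_-,L_+]\times D$ with integer endpoints $L_\pm\in\Z$ (this is stated explicitly in the line following the lemma), so $|\La|$ is a positive integer and the inequality $\ent_\La(P_\La)\le|\La|\,\ent(P)$ (which is immediate from $\ent(P)=\sup_\La|\La|^{-1}\ent_\La(P_\La)$, cf.\ Remark~2.5 in Georgii--Zessin) already gives what is needed, with no Palm-intensity input and no covering argument for fractional $|\La|$.
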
 

\tcb{Our choice of the exponent 7/6 on the left side of the above inequality is due to later use of this lemma (see Definition \ref{def:regularity}).}

{As usual the supremum on the left-hand side is over domains of the form $[L_-,L_+]\times D$ with $L_\pm \in \Z$.} {The lemma is similar to Lemma~5.2 in \cite{georgii2}; it is based on entropy bounds. Some context for this kind of bound is provided in Section~\ref{sec:entropy}.}

Regarding the exponent choices in the above lemma, see Remark \ref{rem:exponent choice}. Before proving Lemmas \ref{fubRangeLem} and \ref{lem:entropy-estim}, let us see how they yield Proposition \ref{superAddLem}.

\begin{proof}[Proof of Proposition \ref{superAddLem}]
 Put 
 \begin{align}\label{def:WMN}
 W^M_N(\CC) := \big(N^{-1}\tH_{\La_N}(\CC)\big) \wedge M.
 \end{align}
	Let $\{N_i^-\}_{i \ge 1}$ and $\{N_i^+\}_{i \ge 1}$ be sequences realizing the $\liminf$ and $\limsup$ {under $\E_P$}. That is,
	\begin{align*}
		\lim_{i \to \ff} \E_P\big[W^M_{N_i^-}\big] & = \liminf_{N \to \ff}\E_P\big[W^M_N\big]\\
		\lim_{i \to \ff} \E_P\big[W^M_{N_i^+}\big] & = \limsup_{N \to \ff}\E_P\big[W^M_N\big]
	\end{align*}
	where we may assume without loss of generality that $N_i^- \ge (N_i^+)^2$ for all $i \ge 1$. Now, we set $N_i' = \lceil (N_i^- - 4({N_i^-})^{7/8}) \rceil$ and claim the almost-superadditive bound
	\begin{align}
		\label{supAddEq}
		N_i'\E_P\big[W^M_{N_i^+}(\CC)\big] \le N_i^-\E_P\big[W^M_{N_i^-}(\CC) \big] + 2M\E_P\Big[\sum_{b \in \CC_{\La_{N_i^-}}}\big(\psi(b) - (N_i^+)^{7/8} + 2\big)_+\Big].
	\end{align}
	\tcb{	First, note that for any $r > 0$ and for any $R > 0$ with $R^{7/8}-2 \geq (R/2)^{7/8}$ we have }
$$
	(r - R^{7/8} + 2)_+ \le r \one\{r \ge (R/2)^{7/8}\}\le r^{7/6} \one\{r^{7/6} \ge (R/2)^{49/48}\}\le 2(r^{7/6} - (R/2)^{49/48}/2)_+.
$$
	{Now, divide both sides of \eqref{supAddEq} 	by $N_i^-$ and send $i \to \ff$.} Lemma \ref{lem:entropy-estim} shows that
	\begin{align}\label{eq:exponent choice}
		\frac1{N_i^-}\E_P\Big[\sum_{b \in \CC_{\La_{N_i^-}}}\big(\psi(b) - (N_i^+)^{7/8} + 2\big)_+\Big]
		&\le \frac2{N_i^-}\E_P\Big[\sum_{b \in \CC_{\La_{N_i^-}}}\big(\psi(b)^{7/6} - (N_i^+/2)^{49/48}\big)_+\Big]\nn\\
		&\le \tcb{2(c + \ent(P))(N_i^+/2)^{-(5\cdot 49)/(8\cdot 48)}}\to 0
		\end{align}
	which concludes the proof, modulo the claim \eqref{supAddEq}.

	To establish \eqref{supAddEq}, we {let $K_1, \dots, K_{N_i'} \su [0, N_i^-]$ be $N_i'$  intervals of length $N_i^+$, placed within the interval $[0, N_i^-]$, such that $K_{i + 1}$ is obtained by shifting $K_i$ to the right by 1 (along the $x$-coordinate) and such that each $K_j$ is at least distance $\lceil (N_i^-)^{7/8}\rceil$ from $0$ and $N_i^-$.
Now, let $\CC \in \Conf(K_j)$  be arbitrary, and $\La \supset K_j$, $\CC'\in \Neut(\La)$ be such that $\CC'\in \Th(K_j)$ and $\CC'_{K_j} = \CC_{K_j}$.} 
	Then by definition, see \eqref{eq:wkdef},
	\begin{align}
	\tH_{K_j}(\CC') \wedge N_i^+M \le \frac1{2\b} \int_{K_j \ti [0, \b]} \bigl|\nabla V_t(z, \CC', \La)|^2 \d(z, t) + MN_i^+\one\{\CC \not \in \Th(K_j)\}.
	\end{align}
	Summing over all $j \le N_i'$ and dividing by $N_i^+$ yields that
	$$	\sum_{j \le N_i'}\big(\frac1{N_i^+}\tH_{K_j}(\CC')\big) \wedge M \le \frac1{2\b} \int_{[0, N_i^-] \ti [0, \b]} \bigl|\nabla V_t(z, \CC', \La)|^2 \d(z, t) + M\sum_{j \le N_i'}\one\{\CC \not \in \Th(K_j)\}.$$
	applying Lemma \ref{fubRangeLem} and taking expectations concludes the proof of \eqref{supAddEq}. 
\end{proof}
Next, we show that the free energy is lower semicontinuous.

\begin{proof}[Proof of Proposition \ref{rateFunProp}]
 {The specific relative entropy is lower semicontinuous by arguments similar to \cite[Proposition 2.6]{georgii2}. Hence,  if $\ent(P) = \infty$, then for any level $\ell$ there is a neighborhood $U$ of $P$ such that $P'\in U$ implies $\ent(P')>\ell$ thus we have lower semicontinuity of $\mc F$ at such $P$. Therefore, we may assume that $\ent(P) < \infty$ from now on.}

Since the supremum of lower semicontinuous functions remains lower semicontinuous, it suffices to show lower semicontinuity of the truncated free energy
$$\mc F^M = \W^M + \b^{-1}\ent.$$
Let $P \in \Ps$ and $\e > 0$ be arbitrary. We need to show that there exists a neighborhood $U$ of $P$ such that
\begin{align}
\label{lsc_eq}
\mc F^M(P) \le \inf_{P' \in U}\mc F^M(P') + \e.
\end{align}
 
	Since the left-hand side of \eqref{lsc_eq} is at most \tcb{$\a:= M + \ent(P)/\beta$} (with $\ent(P)<\infty$), it suffices to show that
$$\mc F^M(P) \le \inf_{\substack{P' \in U\\ \ent(P') \le \a}}\mc F^M(P') + \e.$$
By lower semicontinuity of the specific relative entropy, we may focus on the energy part. Here, for every $M, N \ge 1$ the observable $W^M_N(\CC)$ defined in \eqref{def:WMN} is local and bounded, so that 
$$
	P \mapsto f_N(P):= \E_P[W^M_N(\CC)]
$$
is continuous in $P$. Moreover, the proof of Proposition \ref{superAddLem} revealed that the energies are almost super-additive. If they were precisely super-additive, then the limit in $N$ could be replaced by a supremum, so that again we could leverage lower semicontinuity of a supremum of continuous functions. In the rest of the proof, we show that this line of argumentation extends to the present setting of almost super-additivity.

More precisely, we provide a neighborhood $U$ of $P$ such that
$$f(P) \le \inf_{\substack{P' \in U\\ \ent(P') \le \a}}f(P') + \e,$$
where we set 
$$
	f := \lim_{N \to \ff} f_N=\mathbb W^M.
$$
To achieve this goal, we decompose $f(P) - f(P')$ as
\begin{align*}
	f(P) - f(P') = \big(f(P) - f_N(P)\big) + \big( f_N(P) - f_N(P')\big) + \big(f_N(P') - f(P')\big).
\end{align*}
The first term on the right-hand side tends to 0 as $N \to \ff$. For fixed $N$, the second term becomes arbitrarily small for all $P'$ from a sufficiently small neighborhood of $P$. Hence, it suffices to show that for fixed $a > 0$
$$\limsup_{N \to \ff} \sup_{N' \ge N^2}\sup_{P':\, \ent(P') \le a}(f_N(P') - f_{N'}(P'))\le 0.$$
Now, copying the proof of the bound \eqref{supAddEq} gives that
$$(1 - 4(N')^{-1/8})f_N(P') \le f_{N'}(P') + M\E_{P'_{\La_{N'}}}\Big[\sum_{b \in \CC' }\big(\psi(b) - N^{7/8} + 2\big)_+\Big].$$
Hence, applying Lemma \ref{lem:entropy-estim} concludes the proof.
\end{proof}

It remains prove Lemmas \ref{fubRangeLem} and \ref{lem:entropy-estim}.
\begin{proof}[Proof of Lemma \ref{fubRangeLem}]

The $\{I_j\}_{j \le k}$ are each of the form $\th_i(\La_R)$ for some $i \in \Z$.
	We have
	\begin{align*}
		\sum_{i \in \Z} \one\{\CC \not \in \Th(\th_i(\La_R))\} &\le \sum_{i \in \Z}\sum_{b \in \CC} \one\{b(0) \not \in \th_i(\La_R)\text{ and } b \cap \th_i(\La_R^-) \ne \es\}\\
		&\le \sum_{b \in \CC}\#\{i \le - b_x(0):\, b \cap \th_i(\La_R^-) \ne \es\}\\
		&\phantom \le + \sum_{b \in \CC}\#\{i \ge - b_x(0) - R:\, b \cap \th_i(\La_R^-) \ne \es\}.
	\end{align*}
	 Now, if $i \le -b_x(0)$, then $b$ can intersect $\La_R^-$ only if $i \ge -b_x(0) - \psi(b) + \lceil R^{7/8}\rceil$. Hence, the total number of such $i$ is at most $(\psi(b) - R^{7/8} + 2)_+$. A similar argument applies in the case $i \ge - b_x(0) - R$ which gives the first bound in the lemma. 
\end{proof}

\begin{proof} [Proof of Lemma \ref{lem:entropy-estim}]
	We adapt an argument from the proof of \cite[Lemma 5.2]{georgii2}.	By \cite[Eq.~(2.11)]{georgii2},
	the {\it relative entropy} $\ent_\La(P_\La)$ of $P_\La $ with respect to $\Pois_\La$ satisfies 

	\be
	\label{varCharEq}
		{\ent_\La}(P_\La) = \sup_f \big\{\E_{P_\La}[f(\CC)] - \log \E_{\Pois_\La}[\exp(f(\CC))]\big\}
	\ee
	where the supremum is over all bounded measurable $f:\,\Conf(\La)\to \R$. The inequality extends to non-negative measurable $f$ that are not necessarily bounded \cite[Eq.~(5.3)]{georgii2}.	Set $a = \z^{5/8}$, $$f(\CC) = \sum_{b\in \CC } (\psi(b)^{7/6} - \z)_+ \, ,$$
	and 	apply \eqref{varCharEq} to $af$. {Noting that the specific relative entropy $\ent$ defined in (\ref{def:entP}) is equal to $\sup_\La |\La|^{-1} \ent_\La$ (see Remark 2.5 in \cite{georgii2}),} we find that
	\be \label{eq:vacha1}
		\frac1{|\La|}\, \E_{P_\La}[f(\CC_\La)] 
			\le \frac1a \ent(P) + \frac1{a|\La|}\, \log \E_{\Pois_\La}\big[ \exp\bigl(af(\CC)\bigr) \big]. 
	\ee
	Then, inserting the Laplace transform of a marked Poisson point process \cite[Prop. 5.4]{last2017lectures},
	$$		\log\E_{\Pois_\La}\big[ \exp(af(\CC))\big] = |\La|\bigl( \exp({\vp(\z)}) - 1\bigr) ,		 $$
			 where for a Brownian bridge $B$ on $[0, \b]$, 
	\be
		\vp(\z) := \log {\E_\BB}\Bigl[\exp\bigl( a (\psi(B)^{7/6} - \z)_+\bigr)\Bigr].
	\ee
	We next evaluate for $\z>0$,
	\begin{align} \label{eq:regular-expmom}
		e^{\vp(\z)} - 1 &\nn \le {\E_\BB}\bigl[\exp({a \psi(B)^{7/6}}) \one{\{\psi(B)>\z^{6/7}\}}\bigr]\\
			&\le \Bigl( {\E_\BB}\bigl[\exp({2a \psi(B)^{7/6}})\bigr]\, \P(\psi(B) >\z^{6/7})\Bigr)^{1/2}. 
	\end{align}
	Tail estimates on the maximum of a Brownian motion (\cite[Thm 1.2.6]{durrett2019probability},\cite[Section 2.2.1]{bmBook}), show that $$\log \P(|\psi(B)| > \z^{6/7})$$ is of order $-\z^{12/7}$ for large $\z$. {Next, applying the Laplace principle to the function $f(x)=2ax^{7/6}-cx^2$, which is maximized when $a^{12/5}\sim x^2$, shows that the exponential moment ${\E_\BB}\bigl[e^{2a \psi(B)^{7/6}}\bigr]$ is of order $\exp(O(a^{12/5}))$ as $\z \to \ff$. In particular, $e^{\vp(\z)} - 1$ remains bounded,} which concludes the proof.
\end{proof}

\subsection{Proof of Theorem \ref{mainThm} -- LDP upper bound}
\label{upSec}
 Using \eqref{splitEq}, we re-write $\mun(\Emp(\CC) \in A)$ for $A \su \Pu$ measurable as 
$$\mun(\Emp(\CC) \in A) = \frac 1{Z_{N, \b}}e^{N\b g^{\eta, \eta}(o)} \E_N\big[\one\{\Emp(\vgN) \in A\} \exp\big(- \b H_{\La_N}(\CC) \big) \big].$$
After dividing by $\mun(\Emp(\CC) \in \Pu) = 1$, we arrive at
\begin{align}
	\label{fieldProbEq}
	\mun(\Emp(\CC) \in A) = \frac{\E_N\big[\one\{\Emp(\vgN) \in A\}
	 \exp\big(- \b H_{\La_N}(\CC) \big)	\big]}{\E_N\big[ \exp\big(- \b H_{\La_N}(\CC) \big)\big]}, 
\end{align}
so that proving Theorem \ref{mainThm} reduces to understanding the asymptotics of the numerator in \eqref{fieldProbEq} at an exponential scale.

To prove the LDP upper bound, we leverage a lower bound for the energy, large deviations for Poisson point processes, and standard arguments from the proof of the upper bound in Varadhan's integral lemma \cite{dz98}. As a critical ingredient for the proof of the upper bound we use that if the particles do not interact, then the rate function is given by the relative entropy.
\begin{lemma}[LDP upper bound without interaction]
	\label{biLDPLem}
	For every measurable $A\su \mc P$, 
	$$\limsup_{N \to \ff} \frac1N \log\P_N(\Emp(\CC) \in A) \le -\inf_{P\in \ol A} \ent(P).$$
\end{lemma}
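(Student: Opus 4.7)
The plan is to reduce the statement to a classical process-level large deviation principle for marked Poisson point processes. The two essential observations are that (i) the Binomial measure $\P_N$ differs from the marked Poisson reference $\Pois_{\La_N}$ only by the conditioning $\{\#\CC = N\}$, which costs only subexponentially on the log scale, and (ii) the limiting rate function for the empirical field of the Poisson process is by definition the specific relative entropy $\ent(\cdot)$.

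First, I would pass from $\P_N$ to $\Pois_{\La_N}$. Since $\P_N$ equals $\Pois_{\La_N}$ conditioned on $\{\#\CC = N\}$,
\[
\P_N\bigl(\Emp(\CC)\in A\bigr) = \frac{\Pois_{\La_N}(\Emp(\CC)\in A,\,\#\CC=N)}{\Pois_{\La_N}(\#\CC=N)} \le \frac{\Pois_{\La_N}(\Emp(\CC)\in A)}{\Pois_{\La_N}(\#\CC=N)}.
\]
Because $\Pois_{\La_N}(\#\CC=N) = e^{-N} N^N/N!$, Stirling's formula gives $N^{-1}\log \Pois_{\La_N}(\#\CC=N)\to 0$, so it suffices to prove the claimed upper bound with $\P_N$ replaced by $\Pois_{\La_N}$.

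Second, I would invoke the classical process-level LDP of Georgii--Zessin for marked Poisson point processes (see also Rassoul-Agha and Sepp\"al\"ainen). Specifically, under $\Pois_{\La_N}$ the empirical fields $\Emp(\CC)$ satisfy an LDP at speed $N$ with good rate function $\ent(\cdot)$; the fact that the speed and rate function match is precisely the definition \eqref{def:entP} of specific relative entropy combined with $|D|=1$. This step requires some minor matching of conventions: the $\taub$-topology on $\Pu$ is coarser than the $\tau_{\mc L}$-topology typically used in those references, so the $\taub$-closure of $A$ contains its $\tau_{\mc L}$-closure and the upper bound transfers; moreover, averaging over integer shifts rather than continuous shifts in the $x$-direction is equivalent at the level of the LDP because the two empirical fields differ by a boundary term of order $1/N$ in the integral $\int_0^1 \de_{\theta_s\CC}\,ds$. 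Since any weak limit point of $\Emp(\CC)$ is shift-invariant under the integer translations, the infimum over $\ol A$ is effectively restricted to $\Ps$, consistent with $\ent \equiv \infty$ on $\Pu\setminus \Ps$.

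The main technical point, rather than a serious obstacle, is bookkeeping: one must ensure that the marked-point structure with Brownian-bridge marks in $C([0,\b])$ and the quasi one-dimensional product geometry fit the hypotheses of the cited LDP. Because Brownian bridges are just i.i.d. marks and the geometry is a product of $[0,N]$ with a fixed unit torus $D$, this is immediate. No new estimates beyond Stirling and the classical marked-Poisson LDP are required.
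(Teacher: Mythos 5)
Your first step — conditioning, the identity $\P_N(\cdot) = \Pois_{\La_N}(\cdot \mid \#\CC=N)$, and removing the conditioning via Stirling — matches the paper's proof exactly, and your topological point is also correct: $\taub$ is coarser than $\tau_{\mc L}$, so the closure is larger, the infimum is smaller, and the upper bound transfers.

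The gap is in your treatment of the shift convention. The classical marked-Poisson LDP in the references you name uses the \emph{continuous-shift} empirical field
$\frac1N\int_0^N\de_{\theta_s\CC}\,ds$,
whereas $\Emp(\CC)$ in \eqref{def:EMP} averages over \emph{integer} shifts (deliberately so, because the paper wants to see $\Z$-periodic crystallization). These two random measures are not related by an $O(1/N)$ boundary correction: the first is an average of the second over a uniformly random offset $s\in[0,1]$, a genuine smoothing rather than a perturbation. Their typical limit points have different invariance classes ($\R$-shift-invariant versus $\Z$-shift-invariant), so their rate functions have different effective domains, and an LDP for one does not pass to the other by an exponential-equivalence argument. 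Concretely, if $I_\R$ denotes the rate function for the continuous-shift empirical field, then pushing $P\in\Ps$ forward under $Q\mapsto\int_0^1\theta_{-s}Q\,ds$ only \emph{decreases} specific entropy (by convexity and shift-invariance), so the inequality you would need to transfer the upper bound points the wrong way. Thus your "minor matching of conventions" is actually where the argument breaks.

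The paper sidesteps this by not invoking the continuous-shift marked-Poisson LDP at all: it views $\CC$ under $\Pois_{\La_N}$ as an i.i.d.\ spin system $\{Z_i\}_{i\in\Z}$ on $\Z$ with $Z_i := \CC_{[i,i+1)\ti D}$, for which $\Emp(\CC)$ \emph{is} the $\Z$-shift empirical field, and then cites \cite[Theorem 1.2]{georgii1} for the Level-3 Sanov theorem for such spin systems. With that one change — citing a $\Z$-shift result directly rather than arguing a spurious equivalence between integer and continuous shifts — your proof is correct and identical to the paper's.
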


Recall from (\ref{def:N(P)}) that $\mc U_{\text{meas}}(P)$ is the set of measurable neighborhoods around $P$.

\begin{lemma}[{Measurable} neighborhoods of high-enough energy] \label{lem:lscersatz}
{ 	Let $M \ge 1$ and $\e > 0$ be given. For each $P\in \Ps$, there exists a measurable neighborhood $U_{F, P}\in \mc U_{{\mathrm{meas}}}(P)$ such that one can choose $R(M, \e, P)$ and then $ N(M, \e, P, R)$ to satisfy the following property. If $\CC \in \Neut(\La_N)$ is such that $\Emp(\CC) \in U_{F, P}$, then for all $R\ge R(M, \e, P)$ and $N\ge N(M, \e, P, R)$,
	\be \label{eq:lowernei}
	\frac1N H_{\La_N}(\CC) \ge \W^M(P) - \e- \frac{2M}N\sum_{b \in \CC}\big(\psi(b) - R^{7/8} + 2\big)_+.
	\ee}
\end{lemma}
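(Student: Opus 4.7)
The plan is to lower bound $H_{\La_N}(\CC)/N$ by an empirical average, over integer shifts, of the bounded local functional $W^M_R(\CC) := (\tH_{\La_R}(\CC)/R)\wedge M$, and then convert this into a neighborhood-based lower bound using $\E_P[W^M_R]\to \W^M(P)$ from Proposition~\ref{superAddLem}. The argument proceeds in three steps.

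\emph{Tiling step.} For integer $R\le N$, set $K_i := [i,i+R]\ti \D\su \La_N$ for $i=0,\dots,N-R$. For Lebesgue-almost every $z\in\La_N$ the indicator sum $\sum_{i=0}^{N-R}\one_{K_i}(z)$ equals at most $R$, so
$$
\sum_{i=0}^{N-R}\int_{K_i\ti[0,\b]}|\nabla V_t(z,\CC,\La_N)|^2\,\d(z,t)\;\le\;2\b R\,H_{\La_N}(\CC).
$$
Whenever $\CC\in \Th(K_i)$, the configuration $\CC$ itself is admissible in the infimum \eqref{eq:wkdef} defining $\tH_{K_i}$ (with $\La=\La_N$, using $\CC\in \Neut(\La_N)$); hence the $i$-th local integral dominates $\tH_{K_i}(\CC)\ge R\, W^M_R(\th_i\CC)$ via the elementary inequality $a\ge R\cdot((a/R)\wedge M)$. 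Each remaining $i$ costs at most $RM$, and Lemma~\ref{fubRangeLem} bounds the number of such bad $i$ by $2\sum_{b\in\CC}(\psi(b)-R^{7/8}+2)_+$. Replacing $\sum_{i=0}^{N-R}$ by the full shift-average $\E_{\Emp(\CC)}[W^M_R]$ costs at most $RM/N$ from the $R$ omitted boundary shifts. The net outcome, for every integer $R\le N$, is
$$
\frac{H_{\La_N}(\CC)}{N}\;\ge\; \E_{\Emp(\CC)}[W^M_R]\;-\;\frac{RM}{N}\;-\;\frac{2M}{N}\sum_{b\in\CC}(\psi(b)-R^{7/8}+2)_+.
$$

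\emph{Neighborhood step.} Since $W^M_R$ is bounded by $M$ and depends only on $\CC_{\La_R}$, the set $V_R := \{Q\in\Pu : \E_Q[W^M_R] > \W^M(P)-2\e/3\}$ is an open cylinder in the $\taub$-topology. Proposition~\ref{superAddLem} lets us pick $R(M,\e,P)$ so large that $\E_P[W^M_R]\ge \W^M(P)-\e/3$ for every integer $R\ge R(M,\e,P)$, which secures $P\in V_R$ for each such $R$. We set
$$
U_{F,P} := \bigcap_{R\in\Z,\, R\ge R(M,\e,P)} V_R,
$$
a countable intersection of open cylinder sets, hence a measurable set containing $P$, i.e.\ a measurable neighborhood from $\mc U_{\ms{meas}}(P)$. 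Choosing $N(M,\e,P,R):=\lceil 3RM/\e\rceil$ forces $RM/N\le\e/3$, and plugging $\E_{\Emp(\CC)}[W^M_R]>\W^M(P)-2\e/3$ into the tiling estimate yields \eqref{eq:lowernei} for every $R\ge R(M,\e,P)$ and every $N\ge N(M,\e,P,R)$.

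\emph{Main obstacle.} The subtle point is the uniformity in $R$ required by \eqref{eq:lowernei}: the bridge-length correction on the right-hand side becomes tighter as $R$ grows, so a single cylinder control at one threshold $R_0$ cannot be bootstrapped to all $R\ge R_0$ by monotonicity. One really has to control $\E_Q[W^M_R]$ simultaneously for every $R\ge R(M,\e,P)$, which forces $U_{F,P}$ to be $G_\d$ rather than a basic open cylinder. This is still compatible with $\mc U_{\ms{meas}}(P)$, which only requires measurability, and the LDP upper bound in Section~\ref{upSec} accesses $U_{F,P}$ only through the measurable indicator $\one\{\Emp(\CC)\in U_{F,P}\}$.
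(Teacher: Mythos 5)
Your tiling step is correct and reproduces the content of Lemma~\ref{semContLem}: you push $H_{\La_N}(\CC)$ below $\sum_i \tH_{K_i}(\CC)$ using that each point lies in at most $R$ shifted boxes, pass to $W^M_R$ by truncation, and control the bad $i$'s with Lemma~\ref{fubRangeLem}. Converting the sum over $i\le N-R$ into $\E_{\Emp(\CC)}[W^M_R]$ with an $RM/N$ error is also fine. Up to here you are on the same track as the paper.

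The gap is in the neighborhood step, and it is precisely the point you flag in your last paragraph. You argue that because the conclusion is quantified over all $R\ge R(M,\e,P)$ with a single $U_{F,P}$, one must take $U_{F,P}=\bigcap_{R\ge R_0} V_R$, and you then assert that this is acceptable because $\mc U_{\mathrm{meas}}(P)$ ``only requires measurability.'' That reading of the definition is wrong: Eq.~\eqref{def:N(P)} defines $\mc U_{\mathrm{meas}}(P)$ as the family of \emph{measurable neighborhoods} of $P$, and a neighborhood must contain a $\taub$-open set around $P$. Your $G_\delta$ set need not contain any such open set, because controlling $\E_Q[W^M_R]$ for \emph{all} $R\ge R_0$ cannot be achieved by controlling $\E_Q[f_j]$ for a finite family of bounded local $f_j$---as $R$ grows the local observable $W^M_R$ probes ever larger windows, and the almost-superadditivity estimate that would let you propagate control from $R_0$ to larger $R$ involves an entropy error that is unbounded over general $Q$. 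In the topology of this paper (non-metrizable, non-separable) there is no reason for a countable intersection of cylinder neighborhoods to remain a neighborhood. Moreover, the downstream use of the lemma in Section~\ref{upSec} does require the neighborhood property: the compactness argument there covers the level set $K_\alpha\cap\ol A$ by the $U_{F_i,P_i}$ and extracts a finite subcover, which works precisely because each $U_{F_i,P_i}$ contains an open set about $P_i$. A $G_\delta$ that is not a neighborhood would break that step.

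The paper's own proof avoids this by effectively allowing $U_{F,P}$ to depend on the choice of $R$: one fixes any $R\ge R(M,\e,P)$, notes that $W^M_R$ is a \emph{single} bounded local observable so that $\WRM(Q)$ is close to $\WRM(P)$ on a basic open cylinder depending on that $R$, and then applies Lemma~\ref{semContLem}. The way the lemma is used in Section~\ref{upSec}---``for $R_i$ chosen appropriately''---confirms that one $R$ (hence one cylinder) is chosen per $P_i$. Read in this way the statement is a $\forall R\ \exists U$ assertion rather than $\exists U\ \forall R$, and the paper's argument goes through with open cylinder neighborhoods, which is exactly what the compactness step needs.
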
 

{Lemma~\ref{lem:lscersatz} and Proposition~\ref{enBoundProp} below allow us to apply arguments similar to the proof of Varadhan's lemma, based on a large deviation principle for the empirical field, even though the energy is not directly a function of the empirical field. In~\cite{georgii2}, Georgii and Zessin deal with this difficulty by introducing a notion of \emph{asymptotic empirical functional} \cite[Eq.~(3.1)]{georgii2}, however the latter asks for some uniformity condition which cannot be applied to Coulomb systems.}

Before proving Lemmas \ref{biLDPLem} and \ref{lem:lscersatz}, we explain how they enter the proof of Theorem \ref{mainThm}. By establishing lower and upper bounds for the {non-normalized} measures in \eqref{fieldProbEq}, it is enough to control the numerator in \eqref{fieldProbEq}. 

\begin{proof}[Proof of Theorem \ref{mainThm} -- upper bound {for the numerator in \eqref{fieldProbEq}}]
Let $A\su \mc P$ be measurable. Because the rate function is lower semicontinuous and $\W^M$ is increasing in $M$, it is enough to show that for each $M\ge1$,
\be\label{eq:numerator ineq}
\limsup_{N\to \ff} \frac1N \log \E_N\big[\exp(-\b H_{\La_N} (\CC)) \one\{\Emp(\CC) \in A\}\big] \le - \inf_{P\in \ol A} (\b \W^M(P) + \ent(P)).
\ee
Indeed, if there is a finite constant $C$ such that for every $M \ge 1$ there exists $P^M \in \bar A$ with
$$\W^M(P^M) + \b^{-1}\ent(P^M) \le C,$$
then also
$$\W(P^*) + \b^{-1}\ent(P^*) = \lim_{M \to \ff}
\W^M(P^*) + \b^{-1}\ent(P^*) \le C,$$
where $P^* \in \bar A$ is any limit point of the $P^M$. Here, the compactness of the entropy-sublevel set
$$K_\a := \{P\in \mc P:\,\ent(P)\le \a\}$$
guarantees that some limit point $P^*$ exists \cite[Proposition 2.6]{georgii2}.

       By compactness, there exists a finite set $\{P_i\}_{i \le k}$ and measurable neighborhoods $U_i = U_{F_i,P_i}$ which cover ${K_\a\cap \ol A}$ and satisfy the inequality from Lemma \ref{lem:lscersatz}, for  $N \ge \max_i N(P_i,\e)$.

For $\e > 0$, set
$$E_{R,\e} := \big\{P \in \Pu:\, \E_P[\sum_{b:b_x(0)\in[0,1]}(\psi(b) - R^{7/8} + 2)_+] \leq \e\big\}.$$
 On the event that $\{\Emp(\CC) \in E_{R,\e}\}$, the error in Lemma $\ref{lem:lscersatz}$ is small.

{The set $E_{R,\varepsilon}$ is measurable. Indeed, let $f:\mathsf{Conf}\to \R_+$ be a measurable local function and $A:= \{P\in \Pu: \E_P[f]\leq \e\}$.  Then for every $n\in \N$, the set $A_n:= \{P\in \Pu: \E_P[f\wedge n] \leq\e\}$ is the preimage of $[0,\e]$ under the map $P\mapsto \E_P[f\wedge n]$, which is measurable because $f\wedge n$ is local and bounded. Thus $A_n$ is measurable. Moreover $A_n\downarrow A$, hence $A$ is measurable as well. The obvious choice of $f$ yields that $E_{R,\e}$ is indeed measurable.
}

	\tcb{First, we recall from the proof of Proposition \ref{superAddLem} that for large enough $R > 0$ and all $r > 0$ we have $(r - R^{7/8} + 2)_+ \le 2(r^{7/6} - (R/2)^{49/48}/2)$.}
Hence, we conclude from {the reasoning below Eq.~\eqref{eq:vacha1} in the proof of Lemma~\ref{lem:entropy-estim}} that 
$$\frac1N\log\E_N\Big[\exp\Big(\tfrac12(\tfrac R2)^{\tfrac58 \cdot \tfrac{49}{48}}\E_{\Emp(\CC)}\Big[\sum_{b:b_x(0)\in[0,1]}(\psi(b) - R^{7/8} + 2)_+\Big]\Big)\Big]$$
remains bounded as $R \to \ff$. Thus, by the Markov inequality,
$$\lim_{R \to \ff}\limsup_{N\to\ff}N^{-1}\log\P_N(\Emp(\CC) \in E_{R,\e}^c) = - \ff.$$
	In particular, \tcb{we may} restrict to the set $A \cap E_{R,\e}$ for $R$ large enough. 

 The remaining part adapts the arguments of \cite[Lemma 4.3.6]{dz98}. By compactness of $\ol A\cap K_\a$, there exists a finite covering $\cup_{i \le k} U_i \supset \ol A \cap K_\a$ by measurable neighborhoods $U_i = U_{F_i,P_i}$ satisfying the inequality from Lemma \ref{lem:lscersatz} for $R_i$ chosen appropriately and $N \ge \max_i N(M,\e, P_i,R_i)$.
We now bound the numerator
in \eqref{fieldProbEq}.
For all $N \ge \max_i N(R_i, M, P_i, \e)$, we have 
\begin{equation}
\begin{aligned}
\label{up_bound_eq}
&\E_N\big[\exp(-\b H_{\La_N} (\CC)) \one\{\Emp(\CC) \in A \cap E_{R, \e}\}\big] \\
&\quad\le \sum_{i \le k} \mathrm e^{- \b N \W^M(P_i)- 3\e M} \P_N\big(\Emp(\CC) \in U_i\big) + \P_N\Big(\Emp(\CC)  \in \bigl(\bigcup_i U_i\bigr)^c \cap A \Big).
\end{aligned}
\end{equation}
	Since $(\bigcup_i U_i)^c\cap \bar A \su K_\a^c$,  \tcb{we deduce from Lemma \ref{biLDPLem} that}
              $$\limsup_{N\to \ff} \frac1N \log \P_N\Bigl(\Emp(\CC)  \in \bigl(\bigcup_i U_i\bigr)^c \cap A \Bigr) \leq - \inf_{P\in K_\a} \ent(P) < -\a,$$
	which becomes arbitrarily negative for large $\a$. 
In light of Lemma \ref{biLDPLem}, taking the limit $N\to \ff$ first and then $\e\to 0$ in \eqref{up_bound_eq} proves \eqref{eq:numerator ineq}.
\end{proof}

Next, we prove Lemma \ref{biLDPLem}.
\begin{proof}[Proof of Lemma \ref{biLDPLem}]
		Let $\Pois$ be the reference measure introduced above \eqref{def:entP}. Then 
	\be
		\P_N(\Emp(\CC) \in A) = \Pois_{\La_N}\big( \Emp( \CC) \in A \, |\, \#\CC = N\big) 
	\ee
	hence
	\be
		\frac1N \log \P_N(\Emp(\CC) \in A) \le \frac1N \log \Pois_{\La_N}( \Emp( \CC) \in A) - \frac1N \log \Big( \frac{N^N}{N!} \exp(- N) \Big).
	\ee
	The last term vanishes by Stirling's formula. For the probability under the Poisson process, putting $Z_i := \CC_{[i, i + 1) \ti \D}$ and thinking of $\Emp(\CC)$ as the empirical field of the independent spin system $\{Z_i\}_{i \in \Z}$ on $\Z$, we conclude with \cite[Theorem 1.2]{georgii1}.
\end{proof}

Finally, to prove Lemma \ref{lem:lscersatz}, we need one further auxiliary result. {Set
\begin{align}\label{def:WRM}
	\WRM(P) := \E_P\big[ \big(R^{-1}\tH_{\La_R}(\CC)\big) \wedge M\big] 
\end{align}
to be the expected truncated specific energies in $\La_R$. Note that by {Definition~\ref{specEnDef}}, $\lim_{R\to\ff}\WRM(P)=\WMM(P)$.}
\begin{lemma}[{Lower bound for the energy}]
	\label{semContLem}
	Let $1 \le R \le N - 1$ and $\CC \in \Neut(\La_N)$ be a charge-neutral bridge configuration. Then,
	$$H_{\La_N}(\CC) \ge (N - R)\WRM(\ms{Emp}_{N - R}(\CC)) - 2M\sum_{b \in \CC}\big(\psi(b) - R^{7/8} + 2\big)_+.$$
\end{lemma}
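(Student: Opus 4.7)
\medskip

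The plan is to lower bound $H_{\La_N}(\CC)$ by covering $\La_N$ with the overlapping strips $K_i := [i, i+R] \times D$ for $i = 0, 1, \dots, N-R-1$, and using each strip to produce a term in the sum defining $(N-R)\WRM(\ms{Emp}_{N-R}(\CC))$. Indeed, by translation invariance of $\tH_{\La_R}$ (which is built into its definition via shifting the candidate configuration $\t\CC$), we have $\tH_{\La_R}(\th_i\CC) = \tH_{K_i}(\CC)$, so
\[
(N-R)\WRM\bigl(\ms{Emp}_{N-R}(\CC)\bigr) \;=\; \sum_{i=0}^{N-R-1} \bigl(R^{-1}\tH_{K_i}(\CC)\bigr)\wedge M.
\]

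The key observation is that whenever $\CC \in \Th(K_i)$, the pair $(\La, \t\CC) = (\La_N, \CC)$ is an admissible candidate in the infimum defining $\tH_{K_i}(\CC)$ (since $\CC \in \Neut(\La_N)$ by hypothesis, $\CC \in \Th(K_i)$ by assumption, and trivially $\CC_{K_i} = \CC_{K_i}$), which yields the local energy bound
\[
\tH_{K_i}(\CC) \;\le\; \tfrac{1}{2\b}\!\int_{K_i\times[0,\b]}\!\!|\nabla V_t(z,\CC,\La_N)|^2 \,d(z,t).
\]
Combining this with the trivial bound $(R^{-1}\tH_{K_i}(\CC))\wedge M \le M$ whenever $\CC \notin \Th(K_i)$, I obtain the unified estimate
\[
\bigl(R^{-1}\tH_{K_i}(\CC)\bigr)\wedge M \;\le\; \tfrac{1}{2\b R}\!\int_{K_i\times[0,\b]}\!\!|\nabla V_t|^2\,d(z,t) \;+\; M\,\one\{\CC \notin \Th(K_i)\}.
\]

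Summing over $i$ and swapping sum and integral, the first term becomes $\tfrac{1}{2\b R}\int_{\La_N\times[0,\b]} |\nabla V_t|^2 \cdot \#\{i : z\in K_i\}\, d(z,t)$. Here the central combinatorial point is that for every $x \in [0,N]$ that is \emph{not} an integer, one has $\#\{i \in \{0,\dots,N-R-1\} : x \in [i, i+R]\} \le R$ (at integer $x$ the count can reach $R+1$, but integers form a Lebesgue null set along the $x$-direction, so they do not contribute to the integral). This multiplicity bound yields $\sum_i \int_{K_i} |\nabla V_t|^2 \le R \int_{\La_N} |\nabla V_t|^2$, giving
\[
\sum_i \bigl(R^{-1}\tH_{K_i}(\CC)\bigr)\wedge M \;\le\; H_{\La_N}(\CC) + M \sum_{i=0}^{N-R-1}\one\{\CC \notin \Th(K_i)\}.
\]
Finally, Lemma \ref{fubRangeLem} applied to the intervals $I_i = [i, i+R]$ bounds the residual sum by $2\sum_{b\in\CC}(\psi(b)-R^{7/8}+2)_+$. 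Rearranging gives the claim.

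The only subtle point is the multiplicity count $\#\{i : z \in K_i\} \le R$: the naive estimate gives $R+1$, which would produce a useless factor $(R+1)/R$ on the energy. The truncation to non-integer $x$ is what makes the $R^{-1}$ normalization in $\WRM$ exactly absorb the overlap, and it is why the factor $M$ (rather than $(R+1)/R \cdot M$) appears in the residual. Apart from this counting, the argument is a direct admissibility check for the infimum in $\tH_{K_i}$ combined with the truncation trick for regions where $\CC$ violates the boundary condition $\Th(K_i)$.
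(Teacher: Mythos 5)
Your proof is correct and takes essentially the same approach as the paper: both decompose $\La_N$ into the overlapping strips $[i,i+R]\ti D$, check admissibility of $(\La_N,\CC)$ (equivalently, $(\th_i\La_N,\th_i\CC)$) in the infimum defining $\tH$ for each strip whenever the $\Th$-condition holds, cap the remaining strips at $M$ and control them via Lemma \ref{fubRangeLem}, and then use the a.e.\ multiplicity bound $\#\{i:z\in K_i\}\le R$ to absorb the factor $R^{-1}$ after summing. The paper states the multiplicity step without comment; your explicit verification that the count is $\le R$ off the null set of integer abscissas is a correct and welcome clarification of the same argument, not a different one.
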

\begin{proof} 
	The definition of $\ms{Emp}$ gives that
	$$(N - R)\WRM(\ms{Emp}_{N - R}(\CC)) = \sum_{ i \le N - R - 1} \big(R^{-1}\tH_{\La_R}(\th_i \CC)\big) \wedge M.$$ 
	Since $\La_R \su \th_i(\La_N)$ for every $ i \le N - R - 1$, {the definition of $\tH_{\La_R}$ and the fact that $\CC\in\Neut(\La_N)$} yields that
	\begin{align}
	\tH_{\La_R}(\th_i \CC)\wedge RM \le \frac 1{2\b} \int_{\th_{-i}(\La_R) \ti [0, \b]}|\nabla V_t(z, \CC, \La_N)|^2 \d(z, t) + \one\{\CC \not \in \Th(\th_{-i}(\La_R))\} RM .
	\end{align}
	In particular, dividing by $R$ and summing over $ i \le N - R - 1$ yields that
	\begin{align*}
		(N - R) \WRM (\ms{Emp}_{N - R}(\CC)) &\le \frac1{2R \b}\sum_{ i \le N - R - 1}\int_{\th_{-i}(\La_R) \ti [0, \b]}|\nabla V_t(z, \CC, \La_N)|^2 \d(z, t)\\
		&\phantom\le + M\sum_{ i \le N- R - 1} \one\{\CC \not \in \Th(\th_{-i}(\La_R))\},
	\end{align*}
	where the first sum on the right-hand side is bounded by 
	\begin{align*}
	\frac 1{2\b}\int_{\T \ti [0, \b]}|\nabla V_t(z, \CC, \La_N)|^2 \d(z, t) = H_{\La_N}(\CC).
	\end{align*}
Applying Lemma 	\ref{fubRangeLem} concludes the proof.
\end{proof}

\noindent

\begin{proof} [Proof of Lemma \ref{lem:lscersatz}]

	There exists 	$R = R(M, \e, P) > 0$ such that 
	$$
		\sup_{R \ge R(M, \e, P)}\big|\WRM(P) - \W^M(P)\big| < \e/3.
	$$
	 Note that {$\tH_{\La_R}\wedge M$} is a bounded local observable. Thus, $\WRM(Q)$ are arbitrarily close to $\WRM(P)$ for all $Q$ belonging to a sufficiently small open neighborhood $U_{F, P}$ of the form \eqref{topology-basis}.
 
	 Also note that $\WRM(\ms{Emp}_{N - R}(\CC))$ gets arbitrarily close to $\WRM(\Emp(\CC))$ if $N$ is sufficiently large. Hence, 
	 $$\big|\WRM(\ms{Emp}_{N - R}(\CC)) - \W^M(P)\big| < 2\e/3$$ 
	 if $\Emp(\CC) \in U_{F, P}$. In particular, by Lemma \ref{semContLem},
	 	 \be
		H_{\La_N}(\CC) \ge (N- R)\big(\W^M(P) - 2\e\big/3) - 2M\sum_{b \in \CC}\big(\psi(b) - R^{7/8} + 2\big)_+
	 \ee
	for all $\CC\in U_{F, P}$. Dividing both sides by $N$ and choosing $N$ large enough so that $\frac RN \W^M(P)<\e/3$
	proves \eqref{eq:lowernei}.
\end{proof} 

\subsection{Proof of Theorem \ref{mainThm} -- LDP lower bound}
\label{lowSec}
In order to prove the lower bound in Theorem \ref{mainThm}, we establish a quasi-continuity\footnote{This terminology is {taken} from \cite[Section 4.1.2]{serfInv} {and \cite{bodineau-guionnet1999}}.} property of the specific energy. To make this precise, we show the following analog of \cite[Proposition 4.2]{serfInv}. In essence, Proposition \ref{enBoundProp} states that it is possible to construct a family of bridge configurations, with substantial probability mass, such that (i) the associated empirical fields are close to a given $P \in \Pst$, and (ii) the energy of the bridge configurations does not exceed $\W(P)$ substantially.

\begin{proposition}[Quasi-continuity of the specific energy] 		
	\label{enBoundProp}
		Let $P \in \Pst$ be such that $\mc F_\b(P) < \ff$. Then, for every measurable neighborhood $U_P\in \mc U_{\text{meas}}(P)$ and sufficiently small $\de > 0$, 
		\be
			\liminf_{N\to \ff}\frac1N \log \P_N\Bigl( \Emp(\CC) \in U_P, \ N^{-1} H_{\La_N}(\CC) \le \W(P) + \de \Bigr) \ge - \ent(P) - {O(\de)}. 
		\ee
\end{proposition}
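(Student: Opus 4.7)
The plan is to adapt the screening construction of Lebl\'e--Serfaty~\cite{serfInv} to our setting with Brownian-bridge marks. Fix $\e>0$ small and $M\ge 1$ with $\W^M(P) \ge \W(P)-\e$, and choose $R$ large enough that $\WRM(P) \le \W(P)+\e$. Partition $\La_N$ into $K:=\lfloor N/R\rfloor$ consecutive boxes $\La_R^{(1)},\dots,\La_R^{(K)}$ of length $R$, and consider the product measure $Q_N := \bigotimes_{i\le K} P_{\La_R^{(i)}}$ on $\Conf(\La_N)$. By stationarity of $P$ each factor is a shifted copy of $P_{\La_R}$, so a law-of-large-numbers argument across cells yields that $\Emp(\CC)\to P$ in the $\taub$-topology under $Q_N$ (it suffices to test against bounded local observables, cf.~\eqref{topology-basis}), and hence $\{\Emp(\CC)\in U_P\}$ has $Q_N$-probability tending to $1$. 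Markov's inequality combined with Definition~\ref{specEnDef} also gives that, for $Q_N$-typical $\CC$,
\be
\frac{1}{K}\sum_{i\le K}\big(R^{-1}\tH_{\La_R^{(i)}}(\CC_{\La_R^{(i)}})\big)\wedge M \le \W(P)+2\e,
\ee
while Lemma~\ref{lem:entropy-estim} ensures $\CC_{\La_R^{(i)}}\in\Th(\La_R^{(i)})$ for all but an $o(K)$-fraction of the cells.

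Next I invoke the screening lemma, to be established in Section~\ref{sec:screening}, which for each good cell produces a modified configuration $\CC^{\mathrm{scr}}_{\La_R^{(i)}}\in\Neut(\La_R^{(i)})$ whose electric field $V_t(\cdot,\CC^{\mathrm{scr}},\La_R^{(i)})$ has vanishing normal component on the lateral faces of the box at every time $t\in[0,\b]$. Because the screened cells have no lateral flux, the total energy of the concatenation $\CC^{\mathrm{scr}}:=\bigcup_i \CC^{\mathrm{scr}}_{\La_R^{(i)}}$ decouples across cells, yielding
\be
\tfrac1N H_{\La_N}(\CC^{\mathrm{scr}}) \le \tfrac{1}{KR}\sum_{i\le K} \tH_{\La_R^{(i)}}(\CC_{\La_R^{(i)}})\wedge RM + O(\e) \le \W(P)+O(\e),
\ee
where the error absorbs both the non-good cells (bounded by $M$ each, of density $o(1)$) and the cost of screening, which by construction only affects bridges in a boundary layer of width $O(R^{7/8})$.

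For the entropy bookkeeping, let $Q_N^{\mathrm{scr}}$ denote the law of $\CC^{\mathrm{scr}}$. By additivity of the specific relative entropy for the product structure~\cite{georgii2},
\be
\ent(Q_N^{\mathrm{scr}}\mid\P_N) \le N\ent(P) + O(NR^{-1/8}),
\ee
the correction term arising from the $O(KR^{7/8})$ deterministically modified screening bridges. A standard change-of-measure inequality then gives
\be
\liminf_{N\to\ff}\tfrac{1}{N}\log \P_N(A_N) \ge -\ent(P) - O(\e) - O(R^{-1/8}),
\ee
with $A_N := \{\Emp(\CC)\in U_P,\ N^{-1}H_{\La_N}(\CC)\le \W(P)+O(\e)\}$, provided $Q_N^{\mathrm{scr}}(A_N)\to 1$; the latter holds by the previous paragraphs, noting also that modifying only an asymptotically vanishing fraction of bridges does not change $\Emp(\CC)$ on any fixed bounded local test function. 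Sending first $R\to\ff$ and then $\e\to 0$ yields the claim.

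The main obstacle is the screening construction itself, adapted to configurations of Brownian bridges. In the static setting of~\cite{serfInv}, screening relocates particles in a thin boundary layer to absorb the lateral Coulomb flux; here each ``charge'' is a space-time object $b:[0,\b]\to S$, and the flux constraint must hold \emph{simultaneously at every time} $t\in[0,\b]$. This will force either a time-dependent modification of the bridges or a space-time decomposition of the electric field, in any case substantially more involved than the static analog. The requirement that only an $o(N)$ number of bridges be modified relies crucially on Lemma~\ref{lem:entropy-estim} to control how many bulk bridges penetrate a boundary layer of width $R^{7/8}$, explaining the specific exponent $7/8$ appearing in the definition of $\Th(K)$.
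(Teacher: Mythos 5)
Your high-level decomposition is right and matches the spirit of the paper: tile $\La_N$ into $R$-blocks, use regularity and block averages to control energy, apply a screening step near block boundaries, and pay an entropy of $\ent(P)$ per unit length. However, there is a fatal gap in the entropy bookkeeping that the paper is specifically designed to avoid.

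The step ``$\ent(Q_N^{\mathrm{scr}}\mid\P_N) \le N\ent(P) + O(NR^{-1/8})$'' cannot hold as stated, because $Q_N^{\mathrm{scr}}$ is not absolutely continuous with respect to $\P_N$. The screening construction (Definition~\ref{def:css}) places a positive fraction of the bridges at \emph{deterministic} locations — for instance $\rpp$ time-constant bridges at the points $(i-\tfrac12,0)$ — and replaces bridges near $T_\xi(x_0)$ with evenly spaced time-constant straight lines. The pushforward of any absolutely continuous measure under this map acquires atoms, whereas the reference Binomial bridge process $\P_N$ has a density with respect to $\Leb^{\otimes N}\otimes\PBB^{\otimes N}$. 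Consequently $\ent(Q_N^{\mathrm{scr}}\mid\P_N)=+\infty$ and no change-of-measure inequality is available. This is not a minor technicality: any direct entropy comparison of a deterministically screened measure against $\P_N$ will fail for the same reason.

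The paper resolves this with a coupling rather than a pushforward. One builds a joint law $\Pc$ on pairs $(\CP,\Cco)$ whose \emph{second marginal is exactly} the Binomial bridge process $\P_N$, so no relative-entropy computation for a screened law is needed. Conditionally on the Poisson marginal $\CP$, the Binomial configuration $\Cco$ is an absolutely continuous random perturbation (within sup-distance $1/16$) of the screened configuration $(\CP)^{\ms{scr}}$; the event $\Fc$ that this coupling succeeds carries a probabilistic cost of only $\exp(-c\de N)$ (Lemma~\ref{fcProbLem}). One then writes
\be
\P_N(A_N) = \Pc\bigl(\Cco\in A_N\bigr) \ge \Ec\Bigl[\Pc\bigl[\Fc_{\La_N}\mid\CP\bigr]\one\{\DA(\CP)\in U\}\Bigr] \ge e^{-c\de N}\,\Pc(\DA(\CP)\in U),
\ee
and Sanov's theorem for the i.i.d.\ Poisson blocks, not an entropy bound on the screened measure, supplies the factor $\exp(-N\ent(P)+o(N))$. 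In short: the idea of ``screen, then pay the entropy'' must be replaced by ``couple to a randomized perturbation of the screened configuration, and pay a small probabilistic cost,'' otherwise the argument breaks at the exact moment the entropy inequality is invoked.

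Two smaller points. First, your invocation of $\Th(\La_R^{(i)})$ under the product measure $Q_N$ is vacuous — configurations drawn from $\bigotimes_i P_{\La_R^{(i)}}$ have no bridges starting outside cell $i$ to begin with, so the relevant use of Lemma~\ref{lem:entropy-estim} is rather to ensure $(M,\e)$-regularity holds on most blocks (cf.\ Lemma~\ref{expAppCor}). Second, the law-of-large-numbers step needs to connect the block average $\DA$ to the empirical field $\Emp$, which are not the same object; the paper's Step 3 does precisely this via the local support $\Delta$ of the test functions. Both are fixable; the change-of-measure step is not fixable without introducing a coupling or an equivalent randomization of the screened bridges.
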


	The proof of Proposition \ref{enBoundProp} is the heart of the present paper. Before establishing Proposition~\ref{enBoundProp}, we show how it completes the proof of Theorem \ref{mainThm}.

\begin{proof}[Proof of Theorem \ref{mainThm}, lower bound {for the numerator in~\eqref{fieldProbEq}}]
	Let $A\su \mc P$ be a measurable set, $P\in A^\circ \cap \Ps$ with \mbox{$\mc F_\b(P)<\ff$,} and $\de>0$. Then, there exists a measurable neighborhood $U_P$ of $P$ such that $U_P\su A$, see (\ref{topology-basis}). Note
 {\begin{align*}
	 &\frac1N \log \E_N\big[\one\{\Emp(\vgN) \in A\} \exp\big(- \b H_{\La_N}(\vgN) \big)\big]\\ 
	 &\quad\ge \frac1N \log \E_N\big[\one\{\Emp(\vgN) \in A, \ N^{-1} H_{\La_N}(\CC) \le \W(P) + \de \} \exp\big(- \b H_{\La_N}(\vgN) \big)\big]\\ 
	 &\quad \ge -{\b O(\de)} - \b \W(P) + \frac 1N \log \P_N( \Emp(\vgN) \in U_P, \ N^{-1} H_{\La_N}(\CC) \le \W(P) + \de ).
 \end{align*}}
By Proposition \ref{enBoundProp}, 
 \be
 	\liminf_{N\to \ff} 	 \frac1N \log \E_N\big[\one\{\Emp(\vgN) \in A\} \exp\big(- \b H_{\La_N}(\vgN) \big)\big] \ge - (\b \W(P) + \ent(P)) - O( \de).
 \ee
	This holds true for every $\de>0$ and $P\in A^\circ \cap \Ps$, hence 
	\be
		 	\liminf_{N\to \ff} 	 \frac1N \log \E_N\big[\one\{\Emp(\vgN) \in A\} \exp\big(- \b H_{\La_N}(\vgN) \big)\big] \ge - \inf_{P\in A^\circ \cap \Ps} \bigl( \b \W(P) + \ent(P)\bigr).
	\ee
 	On $A^\circ\setminus \Ps$ the free energy $\mc F_\b$ is infinite, therefore the infimum over $A^\circ\cap \Ps$ is equal to the infimum over $A^\circ$ and the proof is complete.
\end{proof}

\section{Entropy and energy estimates} \label{sec:energy entropy}

In order to implement the screening construction, we require a variety of subtle entropy and energy bounds. The present section is devoted to this topic. First, recall that a key ingredient in the theory of Gibbs measures is a control on interactions between distinct regions of space. In our setup, if we split the strip $S$ into two halves separated by an interface at $x = x_0$, we would like to control interactions between the left and right half-strips $x<x_0$ and $x>x_0$. Interactions can be large in particular if there are too many bridges crossing the interface, or too many bridges that have their initial point $b(0)$ close to $x_0$. This section gathers the required estimates and explains how the estimates connect to classical problems in statistical mechanics.

{The first type of bound is an entropy bound and it has actually already been stated and proven in Lemma~\ref{lem:entropy-estim}. Section~\ref{sec:entropy} provides some additional physical context.}

Some events come with large energy penalties and can be excluded if we impose bounds on the energy. This {second type of bound} is exploited in Section \ref{sec:energybounds}. 

\subsection{{Entropy bounds:} Brownian bridges as unbounded spins} \label{sec:entropy} 

To get a feel for the difficulties caused by Brownian bridges, we may split the energy into a contribution from time-constant bridges {(which amounts to classical mechanics)} plus {quantum} correction terms {of Boltzmann-type} coming from the bridges not being constant, 
\be
	H_\La(\CC) = \frac12 \int_\La |\nabla V_0(z, \CC, \La)|^2 \d z + \frac1{2\b}\int_{\La\ti [0, \b]} \bigl(|\nabla V_t(z, \CC, \La)|^2 - |\nabla V_0(z, \CC, \La)|^2\bigr) \d (z, t). 
\ee
The first term depends only on the initial positions of the bridges, the second term depends on the actual bridges. It is instructive to look at the bridge-dependent part of a pair interaction: let $b, \gamma$ be two bridges, then 
\be \label{eq:bridges-pairint}
	\int_0^\b g\bigl( b(t) - \gamma(t)\bigr) \d t 
		 = \b g\bigl( b(0) - \gamma(0)\bigr) + 	\int_0^\b \Bigl( g\bigl( b(t) - \gamma(t)\bigr)- g\bigl( b(0) - \gamma(0)\bigr)\Bigr) \d t. 
\ee
Consider for simplicity the purely one-dimensional system with Green's function $g(x) = - |x|/2$. Then the additional contribution to the pair interaction from bridge degrees of freedom, i.e., the second contribution in \eqref{eq:bridges-pairint}, is equal to 
\be
	- \frac12 \int_0^\b \Bigl( |b(t) - \gamma(t)| - |b(0) - \gamma(0)|\Bigr) \d t, 
\ee
which can in principle be arbitrarily large for paths with large values for $b(t) - b(0)$ or $\gamma(t)- \gamma(0)$, regardless of the distance $|b(0) - \gamma(0)|$. However, this should be unlikely because Brownian bridges in the bounded time interval {$[0, \b]$} should typically not stray away too much from their initial point (this unlikeliness is at the level of entropy---without even having to take energy into account). Hence, a good control for the additional interaction induced by the bridge degrees of freedom requires probabilistic entropy estimates. 

This situation is similar to the classical statistical mechanics for systems of unbounded spins on a lattice. The space $C([0, \b])$ is a non-compact single-spin space, the path $(b(t)- b(0))_{t\in [0, \b]}$ with initial point $0$ plays the role of a spin. Ensuring the existence of Gibbs measures for unbounded spin systems requires additional conditions and estimates. Several approaches are possible. \emph{Superstability estimates} build on a long series of technical estimates that culminate in bounds on the probability of large spin values with respect to the Gibbs measure \cite{ruelle70, ruelle76, lebowitz-presutti76}. For interacting diffusions seen as Gibbs measures on path space, a different condition is used~\cite{fritz87gradient, dereudre2003interacting-brownian}. In our setting, we employ the the set $\Theta(K)$ in Eq.~\eqref{eq:thetadef} together with the \emph{entropy bound} in Lemma \ref{lem:entropy-estim} for stationary measures, {which is a variant of Lemma~5.2 in \cite{georgii2}}. In particular, this bound plays a role in Lemma \ref{expAppCor} where we show that, with high probability, configurations behave in a reasonable way.
We will additionally introduce notions of $(M,\e)$-regular configurations and tame abscissas (Definitions~\ref{def:regularity} and~\ref{def:etame}).

\subsection{Energy bounds via charge imbalance} \label{sec:energybounds}

Proceeding in the vein of \cite{aizenman2010symmetry}, we now rely on energy estimates to exclude configurations that have too many bridges close to a given abscissa $x = x_0$. Intuitively, if there are too many bridges nearby $x = x_0$, then there is large charge imbalance, hence a large electric field and large electrostatic energy. Thus, conversely an energy bound constrains the charge accumulating in any finite region. 

As we put periodic boundary conditions in the $y$-direction, we henceforth write $\pa (I \ti D) = (\pa I) \ti D$ for the $x$-boundary of a domain inside $S$.
Henceforth, set $\vec e_x = (1, 0, \dots, 0)\in \R^{k+1}$ and
\be \label{def: mu_t}
\mu_t(\CC, \d z) := - \sum_{b\in \CC} \de_{b(t)}^\eta(\d z) + \one_\La\d z,
\ee 
so that $ \mu_t(\CC, A) = \mu_t(\CC)(A) $ computes the {\it charge imbalance} {or \emph{net charge}} in a region $A \su S$. {(We use here the terms ``charge imbalance'' and ``net charge'' synonymously but note that in previous work \cite{aizenman2010symmetry} charge imbalance instead refers to the difference of the total charge to left of some abscissa minus total charge to the right.)}

\tcb{Lemma \ref{lem:chargimb1} below is closely related to the discrepancy bounds \cite[Lemma 2.2]{petrache}. The essence of both results is that the electric energy in a domain grows at least in the order of the square of the charge imbalance. Whereas \cite[Lemma 2.2]{petrache} contains refined estimates that also apply to Riesz energies, the focus of Lemma \ref{lem:chargimb1} concerns time-varying bridges.
}
\begin{lemma}[Charge imbalance bound]
	    \label{lem:chargimb1}
	Let $ L_- \le x_- \le x_+  \le L_+$ and $\CC \in \Neut(\La)$ with $\La = [L_-, L_+]{\times D}$. Then,
	\been
	\im  For $\La_- := [L_-, x_-] \ti D$,
			    \begin{align*}
				    \big|\mu_0(\CC, \La_-)\big|        \le \Bigl( \frac1\b\int_{\D\ti [0, \b]} |\nabla V_t((x_-, y), \CC, \La)|^2 \d (y, t) \Bigr)^{1/2} 
							            + \#\big\{b\in \CC:\, \inf_{t \le \b} |b_x(t) - x_-| \le 1\big\}.
										    \end{align*}
	\im For $\La' :=[x_-,  x_+] \ti D $,
	\begin{multline*}
	\big	|\mu_0(\CC, \La')\big|		\le \Bigl( \frac4\b {\sum_{s\in \{\pm\}} \int_{D \ti [0, \b]} |\nabla V_t( (x_s,y) , \CC, \La)|^2 \d (y, t) \Bigr)^{1/2} } \\
			+{\sum_{s \in \{\pm\}}\#\big\{b\in \CC:\, \inf_{t \le \b} |b_x(t) - x_s| \le 1\big\}}.
	\end{multline*}
	\enen
\end{lemma}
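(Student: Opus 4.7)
My plan is to derive both bounds from Gauss's law for the electrostatic potential $V_t(\cdot, \CC, \La)$ in each time slice, combined with Cauchy--Schwarz and a time-averaging trick to absorb the mismatch between the instantaneous charge at time $0$ in the finite box $\La_-$ and the time-averaged flux across an interface. The starting point is that $V_t$ satisfies $-\Delta V_t = \sum_{b\in \CC} \delta^\eta_{b(t)} - \mathbf 1_\La = -\mu_t(\CC, \cdot)$ and that, because $\CC\in \Neut(\La)$ makes $\mu_t$ a compactly supported, charge-neutral signed measure, the expansion \eqref{kernel_expansion} shows that $\nabla V_t$ decays exponentially as $x \to \pm \infty$.

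For part (i), I would apply the divergence theorem to the semi-infinite strip $A_- := (-\infty, x_-]\times D$. Periodicity in $D$ together with the exponential decay at $x\to -\infty$ yields, for each $t \in [0, \beta]$, that $\mu_t(\CC, A_-)= \int_D \partial_x V_t(x_-, y)\, dy$. Two Cauchy--Schwarz inequalities, first in $y$ using $|D|=1$ and then in $t$, give
$$
\Bigl| \tfrac 1\beta \int_0^\beta \mu_t(\CC, A_-)\, dt \Bigr| \le \Bigl( \tfrac 1\beta \int_{D\times[0,\beta]} |\nabla V_t(x_-, y)|^2\, d(y, t) \Bigr)^{1/2}.
$$
I would then replace the time-averaged semi-infinite quantity by $\mu_0(\CC, \La_-)$: writing
$$
\mu_0(\CC, \La_-) - \tfrac 1\beta \int_0^\beta \mu_t(\CC, A_-)\, dt = \tfrac 1\beta \int_0^\beta \sum_{b\in \CC}\bigl( \delta^\eta_{b(t)}(A_-) - \delta^\eta_{b(0)}(\La_-) \bigr)\, dt,
$$
observing that each summand lies in $[-1, 1]$ and vanishes unless the smeared ball of the bridge $b$ crosses the interface $\{x_-\}\times D$ at some time in $[0, \beta]$, and finally using $\eta < 1$ to convert this into $\inf_{s\le \beta} |b_x(s) - x_-| \le 1$.

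For part (ii) the plan is analogous: write $\La' = A_+\setminus A_-$ with $A_\pm := (-\infty, x_\pm]\times D$, decompose $\mu_0(\CC, \La') = \mu_0(\CC, A_+) - \mu_0(\CC, A_-)$ up to a $\mu_0$-null set, apply the argument of part (i) at each interface $x_s$, and combine the two resulting boundary integrals via $(|a|+|b|)^2 \le 2(a^2+b^2)$; the slightly looser constant $4$ in the statement provides room for the bookkeeping step. The main obstacle, common to both parts, is that final bookkeeping step: one has to check that the discrepancy between the finite-volume time-zero charge imbalance and the semi-infinite time-averaged flux is completely captured by bridges whose paths approach the interior interfaces $x_\pm$. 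In particular, one must verify that bridges whose smeared balls leak across the outer boundary $\{L_\pm\}\times D$ do not require an additional correction, which is precisely where $\eta < 1/4$ plays a role, since the smearing range $\eta$ is comfortably below the threshold $1$ in $\inf_s |b_x(s) - x_s| \le 1$ and any genuinely boundary-straddling bridge is thus counted.
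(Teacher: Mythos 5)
Your proposal is essentially the same argument as the paper's: apply Gauss's theorem on a half-space (respectively a slab) in each time slice, use Cauchy--Schwarz to convert the boundary flux into an $L^2$-norm of $\nabla V_t$, and use a count of bridges near the interface to pass from the time-averaged flux back to the time-zero charge imbalance. The only surface-level differences are (a) you time-average $\mu_t$ first and then apply Cauchy--Schwarz, while the paper subtracts the count from $|\mu_0|$ pointwise in $t$, squares, and then integrates over $t$; and (b) in part (ii) the paper applies Gauss directly to the slab $\La'$ whereas you write $\La' = A_+\setminus A_-$ and run the half-space argument at each face. Both rearrangements yield the same inequalities.

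One caveat on your closing verification: the claim that bridges whose smeared spheres leak across the \emph{outer} boundaries $\{L_\pm\}\ti D$ are harmless ``precisely because $\eta<1/4$'' does not follow. A bridge with $b_x(0)\in[L_-,L_-+\eta)$ that never approaches $x_-$ contributes to the discrepancy between $\mu_0(\CC,\La_-)$ and $\mu_0\big(\CC,(-\infty,x_-]\ti D\big)$, yet it is not counted by $\inf_s|b_x(s)-x_-|\le 1$; the threshold $1$ surrounds $x_-$, not $L_-$. The paper's own proof quietly works with the semi-infinite strip throughout and the ambiguity is harmless in the applications (where $x_\pm$ sit deep in the interior), so this does not invalidate the lemma --- but the parenthetical ``which is precisely where $\eta<1/4$ plays a role'' overclaims, and if you wanted the statement exactly as written you would need to bound the leakage contribution separately rather than appeal to the smearing radius.
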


\begin{proof}
	We start with part (1).
	Gauss's theorem yields that for all $t \le \b$ 
	\begin{align}
		\label{charge_imb_eq}
		\lim_{u\to-\ff}\int_D \nabla V_t((u, y), \CC, \La) \cdot(- \vec e_x)\, \d y + \int_D \nabla V_t((x_-, y), \CC, \La) \cdot \vec e_x\, \d y = \mu_t(\CC, (-\ff, x_-]\ti D).
	\end{align}
	The first term on the left-hand side vanishes since the electric field at $-\ff$ vanishes, {which is easily proven with~\eqref{kernel_expansion}, charge neutrality, and properties of purely one-dimensional systems \cite{aizenman1980structure}}.  
	Hence, by Cauchy-Schwarz,
	\begin{align} \label{eq:chargimb1}
		\mu_t\big(\CC, (-\ff, x_-]\ti D)\big)^2  \le \Big(\int_D |\nabla V_t((x_-, y), \CC, \La)| \, \d y\Big)^2 
		\le \int_D |\nabla V_t((x_-, y), \CC, \La)|^2 \, \d y.
	\end{align}
	Now note that for every $t \le \b$, the charge imbalance is at least
	$$\big|\mu_t(\CC, (-\ff, x_-]\ti D)\big| \ge \big|\mu_0(\CC, {(-\ff, x_-]\ti D}) \big| - \#\big\{b\in \CC:\, \inf_{t \le \b} |b_x(t) - x_-| \le 1\big\}.$$
	Hence, inserting this bound into \eqref{eq:chargimb1} and integrating over time concludes the proof.

	The arguments for part (2) are very similar. Gauss's theorem yields that for all $t \le \b$ 
	\begin{align*}
		{\int_{D} \nabla V_t((x_+,y), \CC, \La) \cdot \vec e_x\, \d y 
				+ \int_{D} \nabla V_t((x_-,y), \CC, \La) \cdot(- \vec e_x)\, \d y} 
			= \mu_t(\CC, \La'). 
	\end{align*}
	Hence, by Cauchy-Schwarz and the fact that $|\pa \La'|=2$ 
	\begin{align} \label{eq:chargimb2}
	 \mu_t(\CC, \La')^2 \le { \(  \sum_{s\in \{+,-\}} \int_{D} |\nabla V_t(( x_s,y), \CC, \La)| \, \d y\)^2 
		 \le 4  \sum_{s\in \{+,-\}} \int_{D} |\nabla V_t(( x_s,y), \CC, \La)|^2 \, \d y}
	\end{align}
	and we now conclude as above.
\end{proof}

\begin{definition}[$\e$-dense abscissas]\label{def:dense abs}
	Let $\e\in(0,1/2)$ and set $\xi = \e^{-3}>0$.
For $\CC \in \Conf(\La_R)$, the set $\ms{Dense}_\e(\CC)$ of \emph{$\e$-dense abscissas} consists of all 
	 $$x_0\in \big[2R^{7/8}, R - 2R^{7/8}\big]$$
	 such that
\be \label{eq:birreg}
		\#\{b\in \CC_{\La_R}:\, |b_x(0) - x_0|\le \xi \} \ge {\frac{4\xi}\e}.
\ee
\end{definition}
 We bound the Lebesgue measure of dense abscissas in terms of the energy. 
\begin{lemma}[A bound on $\e$-dense sets] \label{lem:edense}
	Let $M>2$, $\e\in(0,1/M^2)$, and $R> (2/\e)^8$ and let $\CC \in \Conf(\La_R)$ be such that 
	$$\frac1{2\b}\int_{\La_R\ti D} |	\nabla V_t(z,\CC,\La_R)|^2 \d z < M R$$ 
	and every bridge in $\CC$ has a range less than $R^{7/8}$. Then,
	\be\label{eq:constM}
		|\ms{Dense}_\e(\CC)| \le \e R.
	\ee
\end{lemma}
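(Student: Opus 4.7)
The plan is to prove Lemma~\ref{lem:edense} by a maximal packing argument, exploiting the fact that the density threshold $4\xi/\e$ in \eqref{eq:birreg} is tuned precisely to match the target bound $\e R$. First I would extract from $\ms{Dense}_\e(\CC)$ a maximal subset $\{x_1, \ldots, x_n\}$ satisfying $|x_i - x_j| \ge 2\xi$ for all $i \ne j$. Maximality forces every $y \in \ms{Dense}_\e(\CC)$ to lie within distance $2\xi$ of some $x_i$, so that
\[
	|\ms{Dense}_\e(\CC)| \le 4\xi n.
\]

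The separation condition makes the closed intervals $[x_i - \xi, x_i + \xi]$ pairwise disjoint; each is contained in $[0, R]$ because $\xi = \e^{-3} < R^{7/8}$ (using $R > (2/\e)^8$) and $x_i \in [2R^{7/8}, R - 2R^{7/8}]$. Since each $x_i$ is $\e$-dense, each such interval contains at least $4\xi/\e$ distinct starting points of bridges in $\CC$. Summing over $i$ and using disjointness,
\[
	\frac{4\xi n}{\e} \le \sum_{i=1}^n \#\{b \in \CC :\, |b_x(0) - x_i| \le \xi\} \le \#\CC,
\]
so that $|\ms{Dense}_\e(\CC)| \le 4\xi n \le \e \cdot \#\CC$. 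The entire proof then reduces to establishing $\#\CC \le R$.

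This last step is where the energy and range hypotheses enter, and I expect it to be the main obstacle. The idea is to apply Lemma~\ref{lem:chargimb1} to the full slab $\La_R$: a net charge $|R - \#\CC|$ would produce an electric field of magnitude on the order of $|R - \#\CC|/2$ near the ends of $\La_R$, contradicting the bound $H_{\La_R}(\CC) < MR$ as soon as $|R - \#\CC|$ is not of lower order. Because the integrated energy controls $|\nabla V_t|^2$ only in bulk and not pointwise at the boundary of $\La_R$, one must average the endpoint abscissas over, say, a unit interval to select ``good'' slices at which $\int_{D \times [0, \b]} |\nabla V_t|^2 \, \d(y, t)$ is of order $M$; and the range bound $\psi(b) < R^{7/8}$ is what controls the boundary-bridge correction appearing in Lemma~\ref{lem:chargimb1}. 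This yields $|R - \#\CC| = O_M(1)$, which combined with the very generous assumption $R > (2/\e)^8$ (and the fact that the ambient configurations in the screening application are actually charge-neutral by construction) allows one to absorb the additive slack and conclude $\#\CC \le R$, completing the argument.
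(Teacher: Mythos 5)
The combinatorial step is correct but loses a factor of $2$ relative to what the energy bound can supply. Your maximal $2\xi$-separated net gives $|\ms{Dense}_\e(\CC)| \le 4\xi n$, while disjointness yields $n \cdot 4\xi/\e \le \#\CC$, so you arrive at $|\ms{Dense}_\e(\CC)| \le \e\,\#\CC$ and must prove $\#\CC \le R$. The paper instead integrates \eqref{eq:birreg} over $x_0$ and applies Fubini: since $\int \one\{|b_x(0)-x_0|\le\xi\}\,\d x_0 \le 2\xi$ for each bridge $b$,
\begin{equation*}
\frac{4\xi}{\e}\,|\ms{Dense}_\e(\CC)| \le 2\xi\,\#\CC_\La, \qquad \La = [\tfrac32 R^{7/8},\, R - \tfrac32 R^{7/8}]\times D,
\end{equation*}
i.e., $|\ms{Dense}_\e(\CC)| \le \tfrac{\e}{2}\,\#\CC_\La$. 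The target is therefore the much weaker $\#\CC_\La < 2R$, and that is what the energy bound actually delivers.

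The genuine gap is that $\#\CC \le R$ is not provable from the hypotheses. The charge-imbalance estimate controls $|\mu_t(\CC,\La')|$ up to field averages and boundary crossings, but gives no sign information; in particular, if $\#\CC_\La = R + k$ the same slab-integration you describe yields only $\tfrac14 R^{7/8}k^2 < 4MR$, hence $k \lesssim \sqrt{M}\,R^{1/16}$, which \emph{grows} with $R$ and is certainly not $O_M(1)$ as you assert. There is no mechanism to ``absorb the additive slack'' and upgrade a two-sided smallness estimate to the one-sided $\#\CC\le R$. The paper sidesteps this by going for the binary dichotomy: assume $\#\CC_\La \ge 2R$, note that the $R^{7/8}$ range bound keeps every such bridge inside $[x, R-x]\times D$ for all $x \le \tfrac12 R^{7/8}$, so $\mu_t \le -R$ there for every $t$, and then $\tfrac14 R^{7/8} R^2 < 4MR \le \tfrac12 R^2$ is absurd. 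That is precisely why the $\e/2$ from Fubini matters---it makes the crude threshold $2R$ sufficient. Your appeal to charge neutrality also does not help: the lemma is applied to $\CC=\omega_{\La_R}$, the restriction of a configuration $\omega$ on $\La_N$ to a sub-block $\La_R$, and such restrictions are not charge-neutral even when $\omega$ itself is.
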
 

Let us remark that the choices of parameters $M$, $\e$, and $R$ are not optimal, but rather, chosen to be consistent with assumptions that we will use later on.

\begin{proof} 
	We proceed by contradiction. {We first claim that if \eqref{eq:constM} does not hold, then 
	\begin{align}
		\label{laDenseEq}
{\#\CC_\La\geq 2R}
	\end{align}
	where $\La = [\tfrac32R^{7/8}, R - \tfrac32R^{7/8}]\ti D$. If we have \eqref{laDenseEq},} then
	$$\sup_{t \le \b}\sup_{x \le \tfrac12 R^{7/8}}\mu_t(\CC, [x, R - x] \ti D) \leq -R.$$ 
	On the other hand, by \eqref{eq:chargimb2},
	\begin{align*}
	\mu_t(\CC, [x, R - x] \ti D)^2 &\le 4 \int_D \bigl(|\nabla V_t(x,y)|^2 +|\nabla V_t(R-x,y)|^2\bigr) \d y. 
	\end{align*}
	After integrating over $[0, \frac12 R^{7/8}] \ti [0, \b]$, we arrive at 
	$$\frac1{2\b} \int_{[0, \frac12 R^{7/8}] \ti [0, \b]}\mu_t(\CC, [x, R - x] \ti D)^2 \d (x, t) < 4MR.$$
 But $R> 8M> 16$, so we obtain the contradiction 
	$$\frac14R^{7/8}\cdot R^2 < 4MR \le {\frac12 R^2}.$$

	It remains to show that if \eqref{eq:constM} does not hold, then {$\#\CC_\La\ge 2R.$} To prove this assertion, we integrate \eqref{eq:birreg} over all $x_0 \in \ms{Dense}_\e(\CC)$ to obtain that 
	\begin{align*}
		4\e^{-4}|\ms{Dense}_\e(\CC)| &\le \int_{{[2R^{7/8}, R - 2R^{7/8}]}} \#\{b\in \CC_{\La_R}:\, |b_x(0) - x_0|\le \e^{-3} \} \d x_0 \\
		&=\sum_{b\in \CC_{\La_R}}\int_{{[2R^{7/8}, R - 2R^{7/8}]}} \one\{|b_x(0) - x_0|\le \e^{-3} \} \d x_0\\
		&\le 		2\e^{-3}\#\CC_\La.
	\end{align*}
	Hence, dividing both sides by $2\e^{-3}$ concludes the proof.
\end{proof} 

\section{Screening} \label{sec:screening}

Screening starts with the following: {with every configuration $\CC\in \Conf(\La_R)$} we will associate a new configuration $\Cs \in \Neut(\La_R)$ and an electric field 
\begin{align*}
E^\scr: \La_R\ti [0, \b]&\to \R^{k+1}\\
(z, t)&\mapsto E_t^\scr(z)
\end{align*}
such that 
\begin{itemize}
	\item [(i)]For most $\CC$, the configuration $\Cs$ differs from $\CC$ only near the boundaries of $\La_R$. 
	A precise description of $\Cs$ is in Subsection \ref{subsec:screenconfig}.
	\item [(ii)] The field $E^\scr$ is compatible with $\Cs$ and {a background charge distribution of Lebesgue measure in $\Lambda_R$, in other words, in the region $\Lambda_R$ it satisfies}  
				\be \label{eq:compatible}
					\nabla \cdot E_t^\scr = - \sum_{b\in \Cs} \de_{b(t)}^\eta + \one_{\La_R}
				\ee
			for all $t \le \b$. 
	\item [(iii)]The field $E^\scr$ is \emph{screened}, i.e., 
				\be \label{eq:screened} 
					{E_t^\scr(0, y) \cdot \vec e_x = E_t^\scr(R, y) \cdot \vec e_x = 0}
				\ee
				for all $y\in D$ and $t \le \b$.
	\item [(iv)] The field satisfies an energy bound of the type 
			\be
				\frac1{2\b}\int_{\La_R\ti [0, \b]} \bigl|E_t^\scr(z)\bigr|^2 \d (z, t) \le \tH_{\La_R}(\CC) + \text{ error term}. 
			\ee
			{The error term will be made precise in Proposition \ref{prop:detscreening}.}
\end{itemize} 

Even when restricting to configurations close to a well-behaved target process $P$, some of the bridges in the configurations may fluctuate too wildly for the envisioned screening construction. Thus, we need a preprocessing step. 
Recall that the $x$-range of $b$ is $\psi(b) = \sup_{t \le \b} |b_x(t) - b_x(0)|$ and remember Lemma \ref{lem:entropy-estim}. 

\begin{definition}[$(M, \e)$-regularity] \label{def:regularity}
Let {$M > 2$, $\e\in(0, 1/M^2)$ and	$R> (2/\e)^8$.}	A bridge configuration $\CC \in \Conf( \La_R)$ is \emph{$(M, \e)$-regular} if {\mbox{$\tH_{\La_R}(\CC) +\e < MR$}} and 
	\be
	\label{eRegDef} 
		\sum_{b\in \CC_{\La_R}} \big(\psi(b)^{7/6} - \e^{-7/3}\big)_+ < {\frac{\e R}2 }. 
	\ee
\end{definition} 

\begin{remark}[Choice of parameters]\label{rem:reasonable choices}
As already mentioned, we have chosen $\e$ and $R$ according to later purposes. We however note that $R >(2/\e)^8$ is large enough so that condition \eqref{eRegDef} forbids any single bridge with $x$-range bigger or equal to $R^{7/8}$ since then we would have the contradiction
$$R^{49/48} - \e^{-7/3}<\e R/2.$$
Thus, Lemma \ref{lem:edense} holds for all $(M, \e)$-regular configurations.
\end{remark}

\begin{figure}[!htpb]
	\input{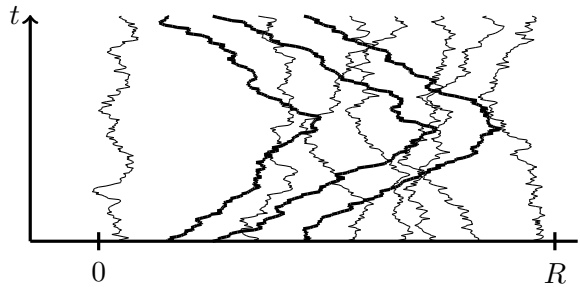}
	\caption{{One-dimensional projection of} a non-regular configuration}
	\label{regFig}
\end{figure}

For $(M, \e)$-regular $\CC\in \Conf(\La_R)$, by definition of the energy and the regularity, there exists a domain $\La \supset \La_R$, and a configuration $\t\CC\in \Neut(\La)$ 
with $\t\CC_{\La_R} = \CC$, such that 
\be \label{eq:bcchoice}
	\frac1{2\b}\int_{\La_R\ti [0, \b]} |\nabla V_t(z, \t\CC, \La)|^2 \d(z, t) \le  \tH_{\La_R}(\CC) +\e < MR.
\ee
We choose such $\La, \t\CC$ and set 
\be \label{def:Echoice} 
	\wt E_t(z):= \nabla V_t(z, \t\CC, \La). 
\ee

\subsection{Choice of a good boundary abscissa $x_0$}\label{subsec:boundary}

The construction of $\Cs$ depends on a choice of boundary abscissa which we denote as $x_0$. In the vein of the classical case from \cite[Section 5]{serfInv}, the screening construction modifies the configuration in boundary layers corresponding roughly to $[0, x_0]\ti D$ and $[R - x_0, R]\ti D$. \\ 

The boundary $x_0$ is chosen such that some electric field average is not too high. In addition, we ask that only few bridges cross the boundaries of $I_0 := [x_0, R - x_0]$. 

\begin{definition}[$\e$-tame abscissas] \label{def:etame} Let $\e \in (0, 1/M^2)$ and  $R>(2/\e)^8$.
	Then, \tcb{setting $$\xi:= \e^{-3},$$} we say that \mbox{$x_0 \in [\e R, R - \e R]$} is \emph{$\e$-tame} if 
 	\begin{enumerate}
 		\item $x_0$ is not $\e$-dense and
		\item if $b \in \CC$ is such that $\inf_{t \le \b}|b_x(t) - x_0| \le 1$, then $|b_x(0) -x_0|\le \xi$.
 	\end{enumerate}
\end{definition} 

{Notice that $R>(2/\e)^8$ implies $\e R> R^{7/8}$, therefore item (1) of the definition says that the number of bridges that have their initial point $b_x(0)$ within distance $\xi$ of $x_0$ is smaller or equal to $4 \xi /\e$.}

{The next lemma will be needed later to prove Lemma \ref{noFracLem} and Proposition \ref{prop:detscreening}.}

\begin{lemma}[Existence of good boundaries]
	\label{goodBoundLem}
	Let {$\e\in(0,1/M^2)$, and $R >(2/\e)^8$}. Furthermore, let {$I\ti D \su \La_{R/2}$ be such that {$I\su[\e R, R/2]$} is} an interval of length at least $4\e R$. If $\CC$ is $(M, \e)$-regular, then there exists a  point $x_0(\CC) \in I$ such that $x_0$ and $R - x_0$ are $\e$-tame, and the following bounds hold: 
	\be \label{eq:gb1} 
		{\int_{D\times [0,\beta]} \Bigl( |\wt E_t(x_0,y)|^2 + |\wt E_t(R-x_0,y)|^2\Bigr) \d(y,t)}
	\le \frac1{\e R}\int_{\La_R \ti [0, \b]} |\wt E_t(z)|^2 \d (z, t),
	\ee
	and
	\be \label{eq:gb2} 
	 \int_{J_{x_0, R} 		\ti \D \ti [0, \b]}|\wt E_t(z)|^2 \d(z, t) \le 
	\frac1{\e \sqrt R}\int_{\La_R \ti [0, \b]} |\wt E_t(z)|^2 \d (z, t)
	\ee
	where 
	$$J_{x_0, R} = [x_0, x_0 + \sqrt R] \cup [R - x_0 - \sqrt R, R - x_0].$$
\end{lemma}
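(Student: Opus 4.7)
The approach is a standard averaging (Markov) argument: I bound the Lebesgue measure of $x_0 \in I$ violating each desired condition separately, then show that the union of these ``bad'' sets has measure strictly less than $|I|$, so that a good $x_0$ exists. Write $Z := \int_{\La_R \times [0,\b]} |\wt E_t(z)|^2 \d(z,t)$; by \eqref{eq:bcchoice} $Z < 2\b MR$.

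\emph{Handling the $\e$-tame conditions.} For part (1) of Definition~\ref{def:etame}, since $\CC$ is $(M,\e)$-regular, by Remark~\ref{rem:reasonable choices} every bridge has $\psi(b) < R^{7/8}$ and $\tH_{\La_R}(\CC) < MR$, so Lemma~\ref{lem:edense} yields $|\mathsf{Dense}_\e(\CC)| \le \e R$. Thus $x_0$'s with either $x_0$ or $R - x_0$ $\e$-dense contribute at most $2\e R$ to the bad set. For part (2), a bridge $b \in \CC$ contributes to the bad set for $x_0$ only if $|b_x(0) - x_0| > \xi = \e^{-3}$ and $\inf_t |b_x(t) - x_0| \le 1$; this forces $\psi(b) \ge \xi - 1$ and makes the bad $x_0$'s lie in a set of measure $\le 2(\psi(b) - \xi + 1)_+$. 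For small $\e$ and such $b$, one has $\psi(b)^{7/6} \ge 2\e^{-7/3}$, hence $(\psi(b)^{7/6} - \e^{-7/3})_+ \ge \tfrac12 \psi(b)^{7/6}$, while also $\psi(b) \le O(\e^{1/2})\,\psi(b)^{7/6}$ in this regime. Combined with the regularity bound~\eqref{eRegDef}, the total bad measure for condition (2) is $O(\e^{3/2} R)$, and similarly for $R - x_0$.

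\emph{Handling the energy averages.} Let $\tilde f(x) := \int_{D \times [0,\b]} |\wt E_t(x,y)|^2 \d(y,t)$, so $\int_0^R \tilde f(u) \d u = Z$. The key observation is that $I \subset [\e R, R/2]$ and $R - I \subset [R/2, R - \e R]$ are \emph{disjoint} subsets of $[0,R]$, hence by Fubini
\[
\int_I \bigl(\tilde f(x_0) + \tilde f(R - x_0)\bigr)\d x_0 \le \int_0^R \tilde f(u) \d u = Z.
\]
By Markov's inequality, the set of $x_0 \in I$ violating~\eqref{eq:gb1} has measure at most $\e R$. For~\eqref{eq:gb2}, each $u \in [0,R]$ lies in $J_{x_0,R}$ for $x_0$ in a set of measure $\le 2\sqrt R$, so by Fubini $\int_I g(x_0)\d x_0 \le 2\sqrt R\cdot Z$, and Markov yields bad measure $\le 2\e R$.

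\emph{Conclusion and main obstacle.} Summing, the total measure of $x_0 \in I$ failing any required condition is bounded by a sum of contributions of order $\e R$ plus lower-order $\e^{3/2}R$ terms; for $\e$ small enough this sum is strictly below $|I|\ge 4\e R$, so a good $x_0$ exists. The main obstacle---and the reason the budget closes---is that the Markov bound on~\eqref{eq:gb1} relies on the \emph{disjointness} of $I$ and $R - I$ in $[0,R]$, which saves a crucial factor of $2$; without this observation the bad sets would exhaust all of $I$ and the argument would fail. A secondary obstacle is ensuring that the entropy-type tail produced by long-ranged bridges (controlled via~\eqref{eRegDef}) is absorbed into a lower-order term, which requires the careful choice $\xi = \e^{-3}$ made in the definition of $\e$-dense.
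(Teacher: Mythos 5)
Your proposal takes essentially the same route as the paper's own proof: a Fubini/Markov averaging argument that bounds, condition by condition, the Lebesgue measure of bad abscissas and then verifies that their union is strictly smaller than $|I|$. The disjointness observation you highlight for \eqref{eq:gb1} (that $I$ and $R-I$ are disjoint in $[0,R]$) is precisely the device the paper uses for the corresponding bound. Your handling of the second tameness condition via \eqref{eRegDef} and the relation $\psi(b)\le O(\e^{1/2})\psi(b)^{7/6}$ on the event $\psi(b)\ge\xi-1$ is also sound.

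However, your final accounting does not close, and your closing remark masks a real gap. You obtain bad measures $2\e R$ (dense condition), $O(\e^{3/2}R)$ (second tameness), $\e R$ (for \eqref{eq:gb1}), and $2\e R$ (for \eqref{eq:gb2}), summing to $5\e R + o(\e R)$. The lemma only guarantees $|I|\ge 4\e R$, and it is applied later in the paper with $|I|$ exactly equal to $4\e R$ (e.g.\ $I=[6\e R,10\e R]$), so $5\e R$ is genuinely too big. Your claim that ``for $\e$ small enough this sum is strictly below $|I|\ge 4\e R$'' is not correct: every dominant term \emph{and} $|I|$ scale linearly in $\e$, so shrinking $\e$ leaves the ratio unchanged. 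The fix you should have made is to apply your own disjointness observation also to the dense-abscissa condition: the sets $\{x_0\in I: x_0\in\mathsf{Dense}_\e(\CC)\}$ and $\{x_0\in I: R-x_0\in\mathsf{Dense}_\e(\CC)\}$ correspond to $\mathsf{Dense}_\e\cap I$ and $\mathsf{Dense}_\e\cap(R-I)$, which are disjoint subsets of $\mathsf{Dense}_\e$, so their total measure is at most $|\mathsf{Dense}_\e(\CC)|\le\e R$, not $2\e R$. This brings the sum down to $4\e R + o(\e R)$, which is much closer to closing, though still borderline; to finish cleanly one must either sharpen the Fubini estimate for \eqref{eq:gb2} by a factor of two or accept a slight loosening of the threshold constant there.
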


\begin{proof}
	We bound the Lebesgue measure of the set of $x_0$'s violating the required conditions. 
		First, the set $A_1\su I$ of abscissas violating condition \eqref{eq:gb1} has Lebesgue measure at most $\e R$. Indeed,
	abbreviate 
	$$
	C = \frac1{2 \b R}\int_{\La_R \ti [0, \b]} |\wt E_t(z)|^2 \d (z, t).
	$$
	For $x\in [0, R/2)$, let 
	$$
	q(x)= \frac1{2\b} {\int_{D\ti [0, \b]} \Bigl(\bigl| \wt E_t(x,y)\bigr|^2 + \bigl| \wt E_t(R-x,y)\bigr|^2 \Bigr) \d (y, t)}.
	$$
	Since $I\su[0, R/2]$, 
	$$
	\big| \{x\in I:\, q(x) > C/\e \}\big| \le \frac \e{C} \int_0^{R/2} q(x)\d x = \frac{\e }{2\b C} \int_{\La_R \ti [0, \b]} |\wt E_t(z)|^2 \d(z, t) = \e R. 
	$$
	Second, the set $A_2\su I$ of $x_0$'s violating condition \eqref{eq:gb2} has Lebesgue measure at most $\e R$. Indeed, let $q_2(x_0)$ be the left-hand side of \eqref{eq:gb2}. By Fubini's theorem,
	$$
	\int_0^{R/2} q_2(x_0) \d x_0 \le {CR}
	$$
	hence 
	$$
	\big| \{x_0\in I:\, q_2(x_0) > C/\e \}\big| \le \e R.
	$$
	Finally, we assert that for the set $A_3$ of $x_0$'s that are not tame, $|A_3|<2 \e R$. Then, we conclude the proof of the lemma by combining the above bounds, as the measure of $A_1\cup A_2 \cup A_3$ is strictly less than $4\e R$ and since $I$ is of length $4\e R$ it is non-empty. To prove the assertion, note that 
	since $\CC$ is $(M, \e)$-regular, we may use Lemma \ref{lem:edense} to bound the Lebesgue measure of the set of $\e$-dense abscissas by $\e R$. Next, let 
		\begin{align}
			T_r(x_0) = [x_0 - r, x_0 + r] \ti D
		\end{align}
		be the product of an interval of length $2r$ centered at {$x_0 $} with the domain $D$.
		 We bound the measure of abscissas satisfying condition \eqref{eRegDef} but violating the second tameness condition by
		\begin{align}
			\sum_{b \in \CC}\big|[b_x(0) - \psi(b), b_x(0) + \psi(b)] \setminus T_{\xi - 1}(b_x(0))\big| \le 2\sum_{b \in \CC}(\psi(b) - \e^{-2})_+.
		\end{align}
		Hence, invoking the bound \eqref{eRegDef} shows $|A_3|<2 \e R$.
\end{proof}

\subsection{Construction of screened configurations $\Cs$.}\label{subsec:screenconfig}

{Lemma \ref{goodBoundLem} applies to the interval $[6\e R, 10\e R]$ which we will use throughout this section. In particular, we will assume $x_0$ designates a choice of the interface with this choice of interval. (A different application with the choice $[2\e R,6\e R]$ is used following Eq.~\eqref{eq:x0prime}, and it will be convenient for our choice of the interval in this section to be to the right of $[2\e R,6\e R]$; this is the reason for setting the left endpoint to $6\e R$.)
 Having chosen a good boundary $x_0$, we proceed with the definition of $\Cs$.

If $\CC\in \Conf(\La_R)$ is not $(M, \e)$-regular, then 
$\Cs$ consists of time-constant bridges (straight lines) with initial points 
\be\label{screeningirreg}
	\big(i + \frac12, 0\big),\qquad {i=0,\dots, R-1}. 
\ee  
 For $(M, \e)$-regular $\CC\in\Conf(\La_R)$,
in a preliminary regularization step, we rearrange all bridges crossing the boundaries $\pa I_0\ti D$, {with $I_0 = [x_0, R-x_0]$ with $x_0$ chosen above. By the choice of $x_0$, all these bridges have their starting point within $T_\xi(x_0)$. We remove all bridges in $\CC$ with starting point in $T_\xi(x_0)$ and replace them with the same number of bridges inside $[x_0 + 1/2, x_0 + 1] \ti \D\su T_\xi(x_0)$. 
The new bridges are time-constant straight lines with regularly-spaced starting points $(x_i, 0)$, {$i=1,\dots, n$}  
	where $n =\#\CC_{T_\xi(x_0)} $ and $x_i = x_0 + \frac12 + \frac1{4n} + (i - 1) \frac1{2n}$. 
	We proceed similarly in $T_\xi(R -x_0)$. 
	
The newly obtained 
\begin{align}\label{eq:regstep}
\textit{ regularized configuration}\quad \CC' \in \Conf(\La_R)
\end{align}
coincides with $\CC$ except possibly in $T_\xi(x_0)\cup T_\xi(R -x_0).$ Moreover, the regularized configuration $\CC'$ has no crossings of the interface at $x_0$ and $R-x_0$. 

Changing $\t\CC$ from \eqref{eq:bcchoice} similarly results in a configuration $\t\CC' \in \Neut(\La)$ with projection $\t\CC'_{\La_R} ={\CC}'$ and electric field 
\be\label{def:E'}
		\wt E'_t(z) := \nabla V_t(z, \t\CC', \La).
\ee
We split the domain $[0, R/2] \ti D$ into sub-domains at the interfaces
$$
	0 < x_- < x_0 < x_+ < R/2,
$$
where 
\be\label{rmmdef}\rmm := x_0 + \xi\ee and $x_-$ is specified as follows. Recall from \eqref{eq:bcchoice}-\eqref{def:Echoice} our choice of $\La$ 
which we now write as $\La = [L_-, L_+] \ti D$ and let 
\be \label{eq:d0}
d_0:= \#\tilde \CC'_{[L_-, x_0]\ti D} - (x_0 - L_-)
\ee
be the net {negative} charge (or deficit of charge) to the left of $x_0$ for the extended regularized configuration $\tilde \CC'\in \Neut(\La)$.
Note that $d_0 + x_0 \in \Z$ since $L_-\in \Z$ (it should be noted that if this is not an integer, then one runs into an issue with fractional charges---see the remark following \eqref{def:EMP}).
In Lemma \ref{noFracLem} below, we check that $x_0 - 2|d_0|\ge 0$ for sufficiently large $R$, which we use later in (\ref{2d_0 choice}). Set 
\be \label{eq:xminusdef}
\rpp := \lfloor x_0 - 2|d_0| \rfloor \in {\N_0}.
\ee
\tcb{Here, we emphasize that $x_0$, and thus $\rmm$ and $\rpp$, depend on the original configuration $\CC$.}

\begin{definition}[$\Css$ for $(M,\e)$-regular configurations]\label{def:css}
The new configuration $\Css$ is defined separately in each subdomain. Then, we proceed in the domain $[R/2, R] \ti D$ similarly.
\begin{enumerate}
	\item In the central subdomain 
	\be\label{def:lambdaplus}
		\La_+ := [\rmm, R - \rmm] \ti D, 
	\ee
	 the configuration $\Css$ coincides with $\CC$, i.e., $\Css_{\La_+} := \CC_{\La_+}$. 
	\item In $[x_0, x_+]\ti D$, the configuration $\Css$ coincides with the time-constant bridges as rearranged in the regularization step above. That is, $\Css$ and $\CC'$ have the same restrictions to $[x_0, x_+]\ti D$. 
	\item In $[x_-, x_0)\ti D$, we place 
		$$a_+ := x_0 - \rpp + d_0$$ 
		                time-constant bridges with regularly spaced starting points $(x_i, 0)$ where $x_i = x_- + i (x_+ - x_-) / a_+ $, $i = 1, \dots, a_+$. Notice that $x_0 + d_0 - \rpp$ is integer because $x_0 + d_0$ and $x_-$ are, and $x_0+d_0 - \rpp\ge 0$ because $\rpp\le x_0 - 2 |d_0|\le x_0 + d_0$.  
	\item In $[0, x_-)\ti D$, we place $\rpp$ time-constant bridges with regularly-spaced starting points $(i - \frac12, 0)$, $i = 1, \dots, \rpp$, so that in this region, $\Cs$ is charge neutral.
\end{enumerate}
\end{definition}

\begin{figure}[!htpb]
	\input{preScreenConf}
	\input{postScreenConf}
	\caption{One-dimensional projection of a transformation of $\CC$ (left) into $\Css$ (right).}
	\label{screenConfFig}
\end{figure}

\tcb{We note that the construction of $\Cs$ is robust in the sense that we could replace $x_0$ by a rational value sufficiently close to a good boundary, thereby making $\Cs$ measurable in the input data $\CC$.}

\begin{lemma}
	\label{noFracLem}
	{Let $M > 2$, $\e\in(0,1/M^2)$, and $R >(2/\e)^8$}. Assume that $\CC\in \Conf(\La_R)$ is $(M, \e)$-regular. Then for any tame abscissa {$x_0\in  [6\e R, 10\e R]$}, with $d_0 \in \R$ as in \eqref{eq:d0},
		 $$x_0 \ge 2|d_0|.
	$$
\end{lemma}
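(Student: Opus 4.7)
\textit{Proof plan.} The plan is to bound $|d_0|$ via the charge-imbalance bound of Lemma~\ref{lem:chargimb1}. The complication is that $d_0$ involves the regularized configuration $\tilde\CC'$, while the available energy bound \eqref{eq:bcchoice} and the field bound \eqref{eq:gb1} are formulated for the pre-regularization field $\wt E$ associated to $\tilde\CC$. I would therefore first pass to the un-regularized counterpart
\[
d_0^{\rm orig}:=\#\tilde\CC_{[L_-,x_0]\times D}-(x_0-L_-).
\]
The regularization removes all bridges with starting point in $T_\xi(x_0)\cup T_\xi(R-x_0)$ and reinserts them as time-constant bridges strictly to the right of $x_0$ (respectively in the analogous slab near $R-x_0$). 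Hence $|d_0-d_0^{\rm orig}|$ is at most the total number of regularized bridges, which by tameness condition~(1) applied at both $x_0$ and $R-x_0$ is bounded by $8\xi/\e=8\e^{-4}$.

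Next, I would apply part~(1) of Lemma~\ref{lem:chargimb1} to $\tilde\CC\in\Neut(\La)$ at the interface $x_0$, which yields
\[
|d_0^{\rm orig}|\le\Bigl(\tfrac{1}{\b}\int_{D\times[0,\b]}|\wt E_t(x_0,y)|^2\,\d(y,t)\Bigr)^{1/2}+\#\{b\in\tilde\CC:\inf_{t\le\b}|b_x(t)-x_0|\le 1\}.
\]
For the field integral, I would combine \eqref{eq:gb1} with the regularity estimate $\int_{\La_R\times[0,\b]}|\wt E|^2\le 2\b(\tH_{\La_R}(\CC)+\e)<2\b MR$ to obtain $\tfrac{1}{\b}\int_{D\times[0,\b]}|\wt E_t(x_0,y)|^2\,\d(y,t)\le 2M/\e$. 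For the bridge-counting term, the key observation is that $R>(2/\e)^8$ forces $R^{7/8}<6\e R\le x_0$, so the slab $[x_0-1,x_0+1]\times D$ lies inside $\La_R^-$; the constraint $\tilde\CC\in\Theta(\La_R)$ then forbids any bridge with $b(0)\notin\La_R$ from entering this slab, so any contributing bridge belongs to $\CC$. Tameness condition~(2) further localizes its starting point in $T_\xi(x_0)$, and tameness condition~(1) caps the count by $4\xi/\e<4\e^{-4}$.

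Assembling the three estimates gives $|d_0|\le\sqrt{2M/\e}+12\e^{-4}$. Since $\e<1/M^2$ and $R>(2/\e)^8$, one has $x_0\ge 6\e R\ge 6\cdot 2^8\e^{-7}$, which comfortably dominates both $\sqrt{M/\e}$ and $\e^{-4}$, so the desired inequality $2|d_0|\le x_0$ will follow from a crude comparison of orders of magnitude. I expect the main technical obstacle to be the first step: carefully tracking that both regularization windows (near $x_0$ and near $R-x_0$) produce only a bounded, tameness-controlled discrete error in $d_0-d_0^{\rm orig}$, rather than an error depending on the energy or on the bridge fluctuations; everything else is an assembly of Lemmas~\ref{lem:chargimb1} and \ref{goodBoundLem}, the tameness conditions, and the boundary constraint $\Theta(\La_R)$.
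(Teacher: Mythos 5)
Your proposal is correct and follows essentially the same route as the paper: pass from $d_0$ (regularized $\tilde\CC'$) to the unregularized charge defect, control the difference by tameness, then apply the charge-imbalance bound of Lemma~\ref{lem:chargimb1} together with \eqref{eq:gb1} and the $(M,\e)$-regularity to bound the field term, and close with an order-of-magnitude comparison against $x_0\ge 6\e R$. Your treatment is, if anything, slightly more explicit than the paper's on one point: you invoke the $\Theta(\La_R)$ boundary constraint to justify that the bridge count near $x_0$ involves only bridges from $\CC$ (so that tameness applies), whereas the paper states that bound directly; your constants are a bit looser (e.g. counting both regularization windows instead of noting the $R-x_0$ window is far from $[L_-,x_0]$), but this is harmless.
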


\begin{proof}
	Let 
\be
d:= \#\tilde \CC_{[L_-, x_0]\ti D} - (x_0 - L_-)
\ee
	 be defined in the same way as $d_0$ in \eqref{eq:d0} but with $\tilde \CC'$ replaced by the non-regularized $\t\CC$ and note that by tameness of $x_0$,
	\be
		|d-d_0|\le  4\e^{-4}.
	\ee
	Now, by Lemma \ref{lem:chargimb1},
	\be
	\label{noFracEq}
	{|d|} \le \Bigl( \frac1\b\int_{\D\ti [0, \b]} |\wt E_t(x_0, y)|^2 \d (y, t) \Bigr)^{1/2}
	+ \#\big\{b\in \t\CC:\, \inf_{t \le \b}\ |b_x(t) - x_0| \le 1 \big\}.
	\ee
      The inequality \eqref{eq:gb1} from Lemma \ref{goodBoundLem} yields
	\be
	\frac1\b\int_{\D\ti [0, \b]} |\wt E_t(x_0, y)|^2 \d (y, t)
	\le  \frac2{2\e \b R}\int_{\La_R \ti [0, \b]} |\wt E_t(z)|^2 \d (z, t) {\le \frac {2M}{\e} \le 2\e^{-3/2}},
	\ee
	Since $x_0$ is $\e$-tame the cardinality in \eqref{noFracEq} is bounded by $4\e^{-4}$, so that we arrive at 
	\be
	{|d|\le 5\e^{-4}}.
	\ee
	As {$R >(2/\e)^8$} and {$x_0 \in [6\e R, 10\e R]$}, it follows that
	\be \label{eq:d0bound}
	|d_0|   \le 9\e^{-4} < \frac{\e R}2 \le \frac{x_0}2.
	\ee
\end{proof}

\subsection{Construction of screened electric fields $E_t^\scr$.}
We now construct the screened electric field $E^{\scr}_t$, along the lines of \cite[Proposition 5.1]{serfInv}, which is compatible with $\Css$ in the sense of \eqref{eq:compatible}. We define $\Es_t$ separately in each of the domains $[0, \rpp] \ti D$, $[\rpp, x_0] \ti D$, and $[x_0, R/2] \ti D$.

\vspace{3mm}
\noindent{\it Construction in ${[x_0, R/2]} \ti D$.}
We put
$\Es_t(\cdot) = \wt E'_t(\cdot)$ defined in \eqref{def:E'}, noting that in this region, 
 this electric field is	compatible with the configuration $\Css$ defined in Subsection \ref{subsec:screenconfig}.

\vspace{3mm}
\noindent{\it Construction in $[0, \rpp] \ti D$.}
Since the bridges are nicely separated, we define $\Es_t$ inside this domain by first constructing the electric field in the cell $[i-1,i]\ti D$ around each time-constant bridge $b_i$ at $x_i$ and then pasting the separate pieces together.
 In particular, for
 	\be \label{eq:et2} 
 	\div\big(\Es_t(x, y)\big) = - \de^\eta_{x_i} + 1,
 	\ee
 	with $\Es_t(x, y) \cdot \vec e_x = 0$ on the boundaries,
 	a necessary and sufficient condition for the existence of a  solution is charge neutrality, i.e., 
	that the integral of the right-hand side over $[i-1,i]\ti D$ vanishes \cite[Theorem 1.2]{neumann}.

Pasting together the fields defines the screened electric field $\Es_t$ on $[0, \rpp] \ti D$. \tcb{This pasting may lead to vector fields that are discontinuous across an interface so that  just as in \cite[Section 5.1]{serfInv}, we need to work with distributional divergences. From \cite[Section 5.1]{serfInv}, we also recall that no additional divergences are created through the pasting operation since the normal derivatives along the interface coincide.}

\vspace{3mm}
\noindent{\it Construction in $[\rpp, x_0] \ti \D$.}
Defining the electric field in $[\rpp, x_0] \ti \D$ is the core of the screening construction. Indeed, here the task is to come up with a compatible electric field whose normal components agree with those of the field $\Es_t(\cdot)$ from the previous step at the interface $\{x_0\} \ti \D$, and vanish at the interface $\{x_-\} \ti \D$. 

For this purpose, we subdivide the domain $[\rpp, x_0] \ti \D$ into $a_+ = x_0 - \rpp + d_0$ congruent domains of the form $\Ii \ti \D$, 
\be\label{eq:ell check}
	\Ii = \bigl[ x_0 + (i+\tfrac12) \ell, x_0 + (i + 1 + \tfrac12) \ell\bigr], \quad 
	\ell = \frac{x_0 - x_-}{a_+}, \quad i=0,\dots, a_+- 1,
\ee
 each centered at a time-constant bridge $b_i$. Note that the choice of $\rpp$ in \eqref{eq:xminusdef} implies that \be \label{2d_0 choice}
 \frac23 \le |\Ii|=\ell \le 2.
 \ee
 
We think of the system in $[x_-, x_0]\ti D$ as a superposition of two systems, a neutral system with a background of $y$-dependent charge density together with the time-constant bridges from $\Cs$, and an additional system with no particles and $y$-dependent charged background only. Accordingly the field is defined as a sum of two contributions.

Let us start with the latter contribution coming from a system with no particles. This has only components in the $x$-direction, and the $x$-component interpolates linearly between $0$ and $\Es_t(x_0, y)$: we define
\begin{align}\label{def:E1}
	E^{(1)}_t(x, y) := \frac{x - \rpp}{x_0 - \rpp}\, ( \Es_t(x_0, y) \cdot \vec e_x)\, \vec e_x. 
\end{align}
Notice 
\begin{align} \label{eq:et1}
		\div\big(E^{(1)}_t(x, y)\big) = \frac{\Es_t(x_0, y) \cdot \vec e_x}{x_0 - \rpp} = :\rho_t^{(1)}(y), 
\end{align}
thus $E_t^{(1)}$ is compatible with a charged background of $y$-dependent charge density given by the right-hand side of \eqref{eq:et1}. 

For the second contribution, define $E_t^{(2, i)}$ as the solution in $\Ii \ti D$ of 
\be \label{eq:et22} 
	\div\big(E^{(2, i)}_t(x, y)\big) = - \de^\eta_{b_i(0)} + 1 - \rho_t^{(1)}(y)
\ee
with {periodic boundary conditions in the $y$-direction} and $E^{(2, i)}_t(x, y) \cdot \vec e_x = 0$ for $x\in \pa \Ii$. As in \eqref{eq:et2}, a necessary and sufficient condition for the existence of a solution is that the integral of the right-hand side over the domain $\Ii$ vanishes, i.e., that $\Ii$ is charge-neutral. We compute 
\be
\begin{aligned}
	\int_{\Ii\ti D} \bigl( 1 - \rho_t^{(1)}(y)\bigr) \d (x, y) & = |\Ii| - \frac{|\Ii|}{x_0 - x_-}\, \int_D\Es_t(x_0, y)\cdot \vec e_x\, \dd y \\
	 & = |\Ii|\Bigl( 1 - \frac{- d_0}{x_0 - x_-}\Bigr)\\
	 & = \Bigl( \frac{x_0 - x_-}{a_+}\Bigr) \Bigl(1 - \frac{- d_0}{x_0 - x_-}\Bigr)= 1.
\end{aligned} 
\ee
 {Since $|\Ii| \geq 2/3 > 2\eta$} (see the remark after \eqref{eq:ell check} and the assumption on $\eta$ in Section \ref{sec:truncation}), we see that indeed the cell $\Ii\ti D$ with background of charge density $1 - \rho_t^{(1)}(y)$ and smeared charge $\de_{b_i(0)}^\eta$ is neutral. 

Finally we set 
\be
	E^{(2)}_t = \sum_i E^{(2, i)}_t \one_{\Ii\ti D}, \quad \Es_t = E^{(1)}_t + E^{(2)}_t
\ee
in $[x_-, x_0]\ti D$ and note that the field $\Es_t$ is compatible with $\Css$ in that domain.

Altogether, we obtain field $\Es$ in $[0, R]\ti D$ with the right properties. The combined construction is illustrated in Figure \ref{screenFieldFig}.
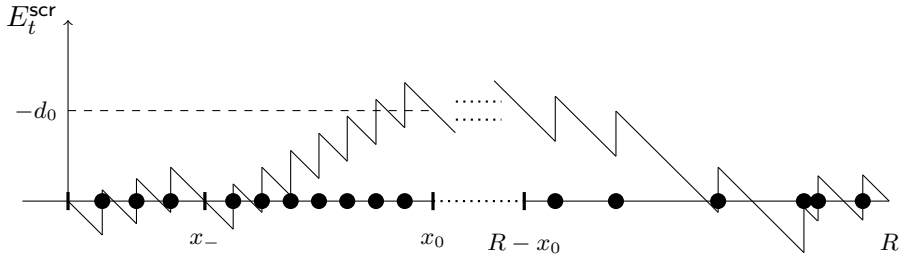
\begin{figure}[!htpb]
	\begin{tikzpicture}[yscale = .6, xscale =-.6]

	\draw[->] (18,0)--(18,4);
	\coordinate[label=180: $\Es_t$] (D) at (18,4);

	\draw[] (0,0)--(8,0);
	\draw[] (19,0)--(10,0);
	\draw[thick,dotted] (10,0)--(8,0);
	\draw[thick,dotted] (9.5,2.2)--(8.5,2.2);
	\draw[thick,dotted] (9.5,1.8)--(8.5,1.8);

	\draw[dashed] (18,2) -- (10,2);
	\coordinate[label=180: {\footnotesize$-d_0$}] (D) at (18,2);

	\draw[very thick] (18,-.2)--(18,.2);
	\draw[very thick] (10,-.2)--(10,.2);
	\draw[very thick] (8,-.2)--(8,.2);
	\draw[very thick] (15,-.2)--(15,.2);


	\coordinate[label=-90: {\footnotesize$R$}] (D) at (0,-.5);
	\coordinate[label=-90: {\footnotesize$x_-$}] (D) at (15,-.5);
	\coordinate[label=-90: {\footnotesize$ x_0$}] (D) at (10,-.5);
	\coordinate[label=-90: {\footnotesize$ R - x_0$}] (D) at (8,-.5);

	\draw (0.00, 0.00)--(0.58, 0.58);
	\fill (0.58,0) circle (5pt);
	\draw (0.58, 0.58)--(0.58, -0.42);

	\draw (0.58, -0.42)--(1.56, 0.56);
	\fill (1.56,0) circle (5pt);
	\draw (1.56, 0.56)--(1.56, -0.44);

	\draw (1.57, -0.44)--(1.87, -0.14);
	\fill (1.87,0) circle (5pt);
	\draw (1.87, -0.14)--(1.87, -1.14);

	\draw (1.87, -1.14)--(3.75, 0.75);
	\fill (3.75,0) circle (5pt);
	\draw (3.75, 0.75)--(3.75, -0.25);

	\draw (3.75, -0.25)--(5.99, 1.99);
	\fill (5.99,0) circle (5pt);
	\draw (5.99, 1.99)--(5.99, 0.99);

	\draw (5.99, 0.99)--(7.32, 2.32);
	\fill (7.32,0) circle (5pt);
	\draw (7.32, 2.32)--(7.32, 1.32);

	\draw (7.32, 1.32)--(8.66, 2.66);


	\draw (9.51, 1.51)--(10.62, 2.62);
	\fill (10.62,0) circle (5pt);
	\draw (10.62, 2.62)--(10.62, 1.62);

\draw (10.62, 1.62)--(11.25, 2.25);
\fill (11.25,0) circle (5pt);
\draw (11.25, 2.25)--(11.25, 1.25);

\draw (11.25, 1.25)--(11.88, 1.88);
\fill (11.88,0) circle (5pt);
\draw (11.88, 1.88)--(11.88, 0.88);

\draw (11.88, 0.88)--(12.50, 1.50);
\fill (12.50,0) circle (5pt);
\draw (12.50, 1.50)--(12.50, 0.50);

\draw (12.50, 0.50)--(13.12, 1.12);
\fill (13.12,0) circle (5pt);
\draw (13.12, 1.12)--(13.12, 0.12);

\draw (13.12, 0.12)--(13.75, 0.75);
\fill (13.75,0) circle (5pt);
\draw (13.75, 0.75)--(13.75, -0.25);

\draw (13.75, -0.25)--(14.38, 0.38);
\fill (14.38,0) circle (5pt);
\draw (14.38, 0.38)--(14.38, -0.62);

\draw (14.38, -0.62)--(15.75, 0.75);
\fill (15.75,0) circle (5pt);
\draw (15.75, 0.75)--(15.75, -0.25);

\draw (15.75, -0.25)--(16.50, 0.50);
\fill (16.50,0) circle (5pt);
\draw (16.50, 0.50)--(16.50, -0.50);

\draw (16.50, -0.50)--(17.25, 0.25);
\fill (17.25,0) circle (5pt);
\draw (17.25, 0.25)--(17.25, -0.75);

\draw (17.25, -0.75)--(18.00, 0.00);
\end{tikzpicture}
	\caption{One-dimensional projection of the electric field $\Es_t$ with positions of charges marked on $x$-axis.}
	\label{screenFieldFig}
\end{figure}

\subsection{Energy change due to screening electric fields}

{Using similar techniques to those employed in \cite{serfInv, aizenman2010symmetry}, we will verify that the so-constructed screened electric field has an energy that does not exceed the original one by much. To this end we first need a lemma discussing the effect on the energy change due to moving a fixed number of charges.}
{Let $$g^\eta(z) := \int_\T \gN(z - z' )\de^\eta_o(\d z')$$ denote the Green's function associated with a smeared charge.}

\begin{lemma}[Moving charges]
	\label{diff1Lem}
	Let $z_1, \dots, z_n\in \La_R$ and $E:S\to \R^{k+1}$ be such that $-\div(E) = \sum_{i \le n} \de_{z_i}^{(\eta)} - \one_{\La_R}$. For $z'_1\in \La_R$ and $z\in S$, define 
	$$E'(z) := E(z) + \nabla g^\eta(z - z'_1) - \nabla g^\eta(z - z_1).$$
	Then, $-\div(E') = \de_{z'_1}^{(\eta )}+ \sum_{i \ge 2} \de^{(\eta)}_{z_i} - \one_{\La_R}$. In addition there exists a constant $c = c(D, \eta) > 0$ such that $\sup_{z \in \La_R}|E'(z) - E(z)| \le c$. Moreover, for any $\a > 0$ there exist $\cm, \cm' > 0$ such that 
	$$\big|E'(z) - E( z)\big| \le \cm\exp\big(- \cm'(|x - x_1| \wedge |x - x'_1|)\big).$$
	holds whenever $|x_1 - x'_1| \le \a$. 
\end{lemma}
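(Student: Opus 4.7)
The plan is to split the Green's function according to the eigenfunction expansion \eqref{kernel_expansion}, writing $g(z) = -|x|/2 + h(z)$ where $h$ and its derivatives decay exponentially in $|x|$ (at rate $\sqrt{\lambda_1}$ set by the first nonzero eigenvalue on $D$). Because smearing is linear, $g^\eta = g_1^\eta + h^\eta$, where $g_1^\eta$ is the spherical average of $-|x|/2$ and $h^\eta$ inherits exponential decay (smearing over the bounded set $\pa B_\eta(o)$ costs at most a factor $e^{\eta\sqrt{\lambda_1}}$). A short computation shows that for $|x|>\eta$ one has $g_1^\eta(z) = -|x|/2$ exactly: indeed $\sign(x - x'_z) = \sign(x)$ uniformly in $z'\in \pa B_\eta(o)$, and $\int x'_z\, \de^\eta_o(\d z') = 0$ by symmetry. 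In particular $\nabla g_1^\eta(z) = -\tfrac12 \sign(x)\vec e_x$ whenever $|x|>\eta$.

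The first assertion of the lemma (the divergence identity) is immediate: since $-\Delta g^\eta(\cdot - z_i) = \de^\eta_{z_i}$, applying $-\div$ termwise to the definition of $E'$ gives $-\div E' = -\div E + \de^\eta_{z'_1} - \de^\eta_{z_1}$, which combines with the hypothesis on $E$ to yield the stated identity. For the uniform bound, note that $g^\eta$ is of class $C^\infty$ (smearing removes the singularity of $g$), and $\nabla g^\eta$ is bounded on $S$: the linear part contributes at most $1/2$ in magnitude, and the exponentially decaying tail is finite and depends only on $D$ and $\eta$. Hence $|E'(z)-E(z)| = |\nabla g^\eta(z-z'_1) - \nabla g^\eta(z-z_1)| \le 2\|\nabla g^\eta\|_\infty =: c$.

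The core of the proof is the exponential bound, which I would establish by distinguishing two cases according to the $x$-coordinate of $z$. In the first case, $x \notin [\min(x_1,x'_1)-\eta, \max(x_1,x'_1)+\eta]$, both $z-z_1$ and $z-z'_1$ have $x$-components of absolute value exceeding $\eta$ and of the same sign, so by the explicit formula above $\nabla g_1^\eta(z-z_1) = \nabla g_1^\eta(z-z'_1)$. The linear contributions cancel, and the difference reduces to $\nabla h^\eta(z-z'_1) - \nabla h^\eta(z-z_1)$, each of which is bounded by $C\exp(-\sqrt{\lambda_1}(|x-x_i|-\eta))$, which has the claimed form. In the second case, $x\in [\min(x_1,x'_1)-\eta, \max(x_1,x'_1)+\eta]$, the hypothesis $|x_1-x'_1|\le \a$ forces $|x-x_1|\wedge |x-x'_1| \le \a/2 + \eta$, so the uniform bound $|E'(z)-E(z)|\le c$ from the previous step is already of the claimed exponential form once $\cm$ is chosen at least $c\exp(\cm'(\a/2+\eta))$.

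The main obstacle is the exact cancellation of the linear part in the first case; this hinges on the explicit spherical-average calculation showing $g_1^\eta(z) = -|x|/2$ outside the $\eta$-neighborhood of the origin, which in turn relies on the symmetry $\int x'_z\,\de^\eta_o(\d z')=0$. Everything else is routine: the divergence identity is algebraic, the uniform bound is a standard smoothness statement for smeared Green's functions, and the exponential decay in the complementary region follows directly from \eqref{kernel_expansion}.
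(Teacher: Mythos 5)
Your proof is correct and takes the same approach as the paper, which only sketches the argument and defers the final bound to the eigenfunction expansion \eqref{kernel_expansion}; you have filled in the details that the paper leaves implicit (the exact cancellation of the linear parts outside the critical interval via the symmetry of the spherical smearing, and the case distinction that lets the uniform bound cover the interval near $x_1$ and $x'_1$).
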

\begin{proof}
	The compatibility of $E'$ is an immediate consequence of the definition. Moreover, since smearing removes the singular part of the Green's function $g$, the increment 
	$$E'(z) - E(z) = \nabla g^\eta(z - z'_1) - \nabla g^\eta(z - z_1)$$
	remains bounded. For the final claim, we fix $\a > 0$ and assume that $|x_1 - x'_1| \le \a$. The final claim then follows from \eqref{kernel_expansion}.
\end{proof}

\begin{proposition}[Energy change from screening electric fields] \label{prop:detscreening}
	Let $M > 2$, {$\e\in(0,1/M^2)$, $R > (2/\e)^8$} and $\CC\in \Conf(\La_R)$ be $(M, \e)$-regular. Then,  the configuration $\Cs\in \Neut(\La_R)$ satisfies
	\begin{itemize}
	\item [(a)] $\Cs$ and $\CC$ coincide on {$[10\e R, R - 10\e R]\ti D$, i.e., 
		$$ \CC_{[10\e R, R - 10\e R]\ti D} = \Cs_{[10\e R, R - 10\e R]\ti D}.$$}
	\item [(b)] The field $E^\scr$ is compatible with $\Cs$ in $\La_R$, i.e., it satisfies Eq. \eqref{eq:compatible}. 
		\item [(c)] The field $E^\scr$ is screened, i.e., it satisfies \eqref{eq:screened}. 
	\item [(d)] For every $t \le \b$, the screened field has an electrostatic energy bounded as 
		$$\int_{\La_R} \bigl|E_t^\scr(z)\bigr|^2 \d z \le \vp_0(\wt E_t, \e, R),$$
			where $\vp_0(\wt E_t, \e, R) \ge 0$ {is measurable in $t$} and satisfies

		$$\frac1{2R\b}		 \int_{[0, \b]}\vp_0(\wt E_t, \e, R) \d t
				\le \frac1{2R \b} \int_{\La_R \ti [0, \b]} |\wt E_t(z)|^2 \d (z, t) 
				+ \vp(\e, R)M$$
				where  $\vp(\e, R) \ge 0$  satisfies
			\be \label{phiBoundEq}
					\limsup_{\e\to 0} \limsup_{R\to \ff}\vp(\e, R) = 0.
			\ee
	\end{itemize} 
\end{proposition}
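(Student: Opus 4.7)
}

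Parts (a), (b), and (c) follow by inspection of the construction in Sections 6.3 and 6.4. For (a), the tame abscissa $x_0$ may be chosen in $[6\e R, 10\e R - \xi]$ (a nonempty subinterval of the one in Lemma \ref{goodBoundLem} once $R > (2/\e)^{16}$, say), so Definition \ref{def:css}(1) yields $\Cs_{\La_+}=\CC_{\La_+}$ with $\La_+\supset [10\e R, R-10\e R]\ti D$. For (b), compatibility in the outer and middle slabs is enforced pointwise by the constructions \eqref{eq:et2}, \eqref{eq:et1}, \eqref{eq:et22}; in the central slab $[x_0, R-x_0]\ti D$ we have $\Es_t=\wt E'_t$, and the only subtle verification is that no bridge of $\t\CC'$ starting outside $[x_0, R-x_0]\ti D$ enters this region. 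Indeed, bridges with starting point in $T_\xi(x_0)$ have been replaced by time-constant bridges in $[x_0+\tfrac12, x_0+1]\ti D\subset[x_0,R-x_0]\ti D$, while any bridge with starting point farther than $\xi>1$ from $x_0$ cannot come within distance $1$ of $x_0$ by the second clause of Definition \ref{def:etame}, hence its smeared charge is disjoint from the region. Part (c) holds by construction of the outer-slab cells with vanishing $x$-flux at $x=0$ (and symmetrically at $x=R$).

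For (d), set $\vp_0(\wt E_t, \e, R) := \int_{\La_R}|E^\scr_t|^2\dd z$ (measurable in $t$ since $\wt E_t$ is), and decompose the integral over $\La_R$ into the three types of regions from Section 6.3. In the central slab we compare $\wt E'_t$ to $\wt E_t$ via iterated application of Lemma \ref{diff1Lem} for the at most $4\xi/\e = O(\e^{-4})$ bridges displaced in the regularization step \eqref{eq:regstep}; this yields $\|\wt E'_t-\wt E_t\|_{L^2(\La_R)} = O(\e^{-4})$ for each $t$, and Cauchy--Schwarz with the bound $\int_0^\b\|\wt E_t\|_{L^2(\La_R)}^2\dd t\leq 2\b MR$ from \eqref{eq:bcchoice} gives
\be
\int_0^\b\int_{[x_0,R-x_0]\ti D}|\wt E'_t|^2\dd(z,t) \leq \int_{\La_R\ti[0,\b]}|\wt E_t|^2\dd(z,t) + O(\b\e^{-4}\sqrt{MR}) + O(\b\e^{-8}).
\ee
For each middle slab $[x_-, x_0]\ti D$, an explicit calculation gives
\be
\int_{[x_-,x_0]\ti D}|E^{(1)}_t|^2\dd z = \tfrac{x_0-x_-}{3}\int_D|\wt E'_t(x_0,y)\cdot\vec e_x|^2\dd y,
\ee
which by Lemma \ref{noFracLem} ($x_0-x_-\leq 2|d_0|+1 = O(\e^{-4})$), the bound \eqref{eq:gb1}, and the central-slab comparison above time-integrates to $O(\b M/\e^5 + \b/\e^{12})$. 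Each cell field $E^{(2,i)}_t$ solves a Neumann problem in $\Ii\ti D$ with $|\Ii|\in[2/3,2]$ and $L^2$-bounded data ($\de^\eta_{b_i(0)}$, a constant, and $\rho^{(1)}_t$), so standard elliptic estimates give $\|E^{(2,i)}_t\|_{L^2(\Ii\ti D)}^2 \leq C(1 + \|\rho^{(1)}_t\|_{L^2(\Ii\ti D)}^2)$; summing over the $a_+ = O(\e^{-4})$ cells yields a time-integrated contribution of the same order as the $E^{(1)}_t$ term. Finally, each outer slab $[0,x_-]\ti D$ contains at most $\rpp \leq 10\e R$ single-charge unit cells of $O(1)$ energy each, yielding an $O(\b\e R)$ time-integrated contribution.

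Combining the bounds and dividing by $2R\b$ gives
\be
\frac1{2R\b}\int_0^\b\vp_0(\wt E_t,\e,R)\dd t \leq \frac1{2R\b}\int_{\La_R\ti[0,\b]}|\wt E_t|^2\dd(z,t) + \vp(\e,R)\,M,
\ee
with $\vp(\e,R) = O(\e) + O(R^{-1/2})$ (constants allowed to depend on $\e$ and $M$), so $\limsup_{R\to\ff}\vp(\e,R) = O(\e)$ and \eqref{phiBoundEq} follows. The main technical obstacle is the careful accounting of the $\e$, $R$, $M$ dependencies of the various error terms: several individual corrections (notably $O(\e^{-4}\sqrt{M/R})$ from the regularization and $O(M/(\e^5 R))$ from the boundary flux of $E^{(1)}$) diverge as $\e\to 0$ for $R$ fixed, yet each is tamed by an explicit inverse power of $R$, so that the iterated limit $\lim_{\e\to 0}\lim_{R\to\ff}$ yields the required vanishing.
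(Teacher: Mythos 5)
Your proposal is correct and reproduces the paper's proof with the same overall decomposition (outer slabs / middle slabs / central slab, exactly as in the paper). The one notable deviation is in the central-slab comparison of $\wt E'_t$ to $\wt E_t$: the paper further subdivides $I_0\times D$ into a $\sqrt R$-width boundary layer $J_{x_0,R}$ (where the crude bound \eqref{enIntEq2} is applied together with the extra boundary estimate \eqref{eq:gb2}) and the bulk (where the pointwise exponential factor from Lemma \ref{diff1Lem} has already decayed to $e^{-c_0'\sqrt R}$), whereas you simply note $\|\wt E'_t - \wt E_t\|_{L^2(I_0\times D)} = O(\e^{-4})$ uniformly in $t$ (correct: the $L^2$-norm of the exponential tail is an $\e$-dependent constant) and close the cross term by global Cauchy--Schwarz in both $z$ and $t$, arriving at an error $O(\e^{-4}\sqrt{M/R}) + O(\e^{-8}/R)$ after dividing by $R\b$. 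Both routes give a central-slab error that vanishes as $R\to\infty$ at fixed $\e$, so the final accounting — $\limsup_{R\to\infty}\vp(\e,R) = O(\e)$ driven by the outer slabs — is identical; your version is arguably a hair simpler because it does not need the second bound \eqref{eq:gb2} of Lemma \ref{goodBoundLem}, but it loses the pointwise-in-$t$ form of the estimate that the paper records along the way. Two small observations beyond that: your remark about shrinking the good-abscissa interval to $[6\e R, 10\e R-\xi]$ to literally guarantee $\La_+ \supset [10\e R, R-10\e R]\times D$ is a fair minor correction, and is harmless since $\xi=\e^{-3}\ll\e R$ under $R>(2/\e)^8$; and in your verification of (b), note that beyond bridges of $\CC$ controlled by the tameness of $x_0$, one should also invoke $\t\CC'\in\Th(\La_R)$ to exclude bridges of $\t\CC'$ starting outside $\La_R$ from entering $[x_0,R-x_0]\times D$ (which holds because $x_0\ge 6\e R > R^{7/8}$ in the stated parameter regime).
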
 

\begin{proof}
		Parts (a), (b), and (c) hold true by definition of $\Cs$ and $\Es_t$. 
		
		It remains to verify the energy condition (d). 
	
\vspace{3mm}\noindent	{\it Energy in ${[0, x_-]} \ti D$.}
	{Note that the energy each of the charge-neutral unit-volume constituent fields of $\wt E'_t$ corresponding to (4) in Definition \ref{def:css} is of constant order (see for instance \cite[Lemma 5.7]{rouSe}).} Hence, the energy of $\Es_t$ in the domain $[0, \rpp] \ti \D$ is of order $O(\rpp) = O(\e R)$, 
		\be
			\int_{[0, x_-]\ti D}|\Es_t(z)|^2 \d z = O(\e R). 
		\ee
	
\vspace{3mm}\noindent	{\it Energy in ${[x_-, x_0]} \ti D$.}		 For the layer $[x_-, x_0]\ti D$, remember 
		$E_t^\scr = E_t^{(1)} + E_t^{(2)}$. Using the definition of $E_t^{(1)}$ in \eqref{def:E1}, we compute
		\be
			\int_{[x_-, x_0]\ti D} |E_t^{(1)}(z)|^2 \d z = \frac13 (x_0 - x_-) \int_D \bigl( \wt E'_t(x_0, y)\cdot \vec e_x\bigr)^2 \d y
		\ee
		and note by definition in \eqref{eq:xminusdef} we have 
			$x_0 - x_- \le 1 + 2 |d_0|.$ 
		Next, the field $E^{(2)}$ consists of $a_+ \le 3 |d_0| + 1$ 
		constituents $\{E^{(2, i)}\}_{i \le a_+-1}$. Since the length of the interval $\Ii$ is of constant order, we conclude from \cite[Lemma 5.7]{rouSe} that also the energy of $E^{(2, i)}$ is of constant order. Hence, 
		\be
			\int_{[x_-, x_0]\ti D} |E_t^{(2)}(z)|^2 \d z = O(a_+) = O(|d_0|).
		\ee
		Using $|E_t^{(1)}+ E_t^{(2)}|^2 \le 2 |E_t^{(1)}|^2 + 2|E_t^{(2)}|^2$ we deduce 
		\be
			\int_{[x_-, x_0]\ti D} |\Es_t(z)|^2 \d z \le O(|d_0|) \int_D \bigl( \wt E'_t(x_0, y)\cdot \vec e_x\bigr)^2 \d y + O(|d_0|).
		\ee

\vspace{3mm}\noindent		{\it Energy in $I_0 \ti D$.}
		Remember the configuration $\t\CC\in \Neut(\La)$ from \eqref{eq:bcchoice} and the fields $\wt E_t$ and $\wt E'_t$ from \eqref{def:Echoice} and \eqref{def:E'}. 
		By definition of $\Es_t$, 
		\be
			\int_{I_0\ti D} |\Es_t(z)|^2 \d (z, t) = \int_{I_0\ti D} |\wt E'_t(z)|^2 \d (z, t).
		\ee
Altogether, we find that
		\begin{multline} \label{eq:detscreen}
			\int_{\La_R} |\Es_t(z)|^2 \d z
				\le 	\int_{I_0\ti D} |\wt E'_t(z)|^2 \d z + O(|d_0|) + O(\e R) \\
					+ O(|d_0|) \int_D \bigl( |\wt E'_t(x_0, y)|^2 + |\wt E'_t(R -x_0, y)|^2 \bigr) \d y. 
		\end{multline} 

	Next, we compare $\wt E_t$ and $\wt E'_t$ in $[x_0, R - x_0]\ti D$. Remember $\wt E_t, \wt E'_t$ are gradient fields created by configurations $\t\CC, \tilde \CC'\in \Neut(\La)$, with  $\tilde \CC'$ obtained from $\t\CC$ in the regularization step \eqref{eq:regstep}. 
	
	 Lemma \ref{diff1Lem} bounds the change in electric field modulus caused by moving a single bridge in a given configuration. Since $x_0$ is $\e$-tame, one may apply this lemma at most {$4\e^{-4}$} successive times to obtain for $x \in I_0$
	\be\label{eq:using difflemm}
		|\wt E'_t(x, y) - \wt E_t(x, y)| \le 4\e^{-4}\cm\exp(-\cm' (x - x_0) \wedge (R - x_0 - x)).
	\ee
	Combining $a^2 = b^2 + (a - b)^2 + 2b (a - b)$ with the Cauchy-Schwarz inequality, we get 
	\begin{align*}
		\int_\D |\wt E'_t(x, y)|^2 \d y \le& \int_\D |\wt E_t(x, y)|^2\d y + \int_\D |\wt E'_t(x, y) - \wt E_t(x, y)|^2 \d y \\
		& + 2 \Big(\int_\D |\wt E_t(x, y)|^2 \d y \ti \int_\D |\wt E'_t(x, y) - \wt E_t(x, y)|^2 \d y \Big)^{1/2}. \nn
	\end{align*}
	Applying \eqref{eq:using difflemm} to the second and third terms on the right-hand side, we find that there exist $c_0(\e)>1$ and $c_0'(\e) > 0$ such that for all $x \in I_0$, 
	\begin{align}
		\label{enIntEq}
		\int_\D |\wt E'_t(x, y)|^2 \d y \le \int_\D |\wt E_t(x, y)|^2 \d y+ c_0\exp(-c_0' (x - x_0) \wedge (R - x_0 - x))\Big(1 + \int_\D |\wt E_t(x, y)|^2 \d y\Big).
	\end{align}
	In particular, 
	\begin{align}
		\label{enIntEq2}
		\int_\D |\wt E'_t(x, y)|^2 \d y \le 2c_0\Big(1 + \int_\D |\wt E_t(x, y)|^2 \d y\Big).
	\end{align}

			We return to \eqref{enIntEq}. For the first term on the right-hand side of \eqref{eq:detscreen}, we further subdivide $[x_0, R -x_0]\ti D$. Integrating \eqref{enIntEq2} over $$J_{x_0, R} = [x_0, x_0 + \sqrt R] \cup [R - x_0 - \sqrt R, R - x_0]$$ shows that for all large $R$, 
	\be \label{eq:detscreen2} 
		\int_{J_{x_0, R}\ti \D} |\Es_t(z)|^2 \d z \le 2c_0\Big(\sqrt R + \int_{[x_0, x_0 + \sqrt R] \ti \D}|\wt E_t(z)|^2 \d z\Big).
\ee
	Integrating \eqref{enIntEq} over all $x \in [x_0 + \sqrt R, R - x_0 - \sqrt R]$ shows that for all large $R$, 
	\begin{multline} \label{eq:detscreen3}
		\int_{[x_0+ \sqrt R, R - x_0 - \sqrt R]\ti \D} \hspace{-.2cm}|\Es_t(z)|^2 \d z 
 	\le \int_{\La_R} |\wt E_t(z)|^2 \d z 
			+ c_0\mathrm e^{- \sqrt R}\Bigl(1 +\int_{\La_R}|\wt E_t(z)|^2 \d z \Bigr).
	\end{multline} 
	Combining the inequalities \eqref{eq:detscreen}, \eqref{eq:detscreen2} and \eqref{eq:detscreen3}, we get
	\begin{align*} 
		\int_{\La_R} |\Es_t(z)|^2 \d z
				&\le 	\int_{\La_R} |\wt E_t(z)|^2 \d z + c_0 \mathrm e^{- \sqrt R} \Bigl( 1 + \int_{\La_R} |\wt E_t(z)|^2 \d z \Bigr) + O(|d_0|) + O(\e R) \\
				&\phantom\le+ 2 c_0 \Bigl( 2 \sqrt R + \int_{J_{x_0, R}}|\wt E_t(z)|^2 \d z \Bigr) \\
					&\phantom\le+ O(|d_0|) c_0 \int_{\pa I_0\ti D} |\wt E_t(z)|^2  \d z. 
	\end{align*} 
	To conclude, let $\vp_0$ denote the right-hand side. Then, we integrate $t$ over $[0, \b]$, divide by $2\b$, and use the bounds from Lemma \ref{goodBoundLem}. This yields 
	\begin{multline*} 
		\frac1{2\b} \int_0^\b \vp_0(\wt E_t, \e, R) \d t
				\le \frac1{2\b} \int_{\La_R\ti [0, \b]} |\wt E_t(z)|^2 \d(z, t) + c_0 \mathrm e^{- \sqrt R} \big( 1 + MR \big) + O(|d_0|) \\
				+ O(\e R)	+ 2 c_0 \big( 2 \sqrt R + {\frac M\e\sqrt R} \big) 
					+ O(|d_0|) c_0 \frac M\e
	\end{multline*} 
	Remembering $|d_0|< \frac{\e R}2$ from \eqref{eq:d0bound}, we obtain 
	\be
\frac1{2\b}		 \int_{[0, \b]}\vp_0(\wt E_t, \e, R) \d t
				\le \frac1{2R \b} \int_{\La_R \ti [0, \b]} |\wt E_t(z)|^2 \d (z, t) 
				+ \vp(\e, R)M
	\ee
	with 
	\begin{equation*}
		\limsup_{R\to \ff}\vp(\e, R) = O(\e). \qedhere
	\end{equation*}
\end{proof} 

\section{Quasi-continuity of the specific energy} \label{sec:quasicontinuity} 

In this section, we prove the quasi-continuity result, Proposition \ref{enBoundProp}.

\subsection{Proof strategy} 
The proof is a variant of arguments by {Lebl{\'e}} and Serfaty {\cite{serfInv}}. Let us sketch the heuristic argument before we embark on the details. The principal obstruction to Proposition \ref{enBoundProp} is the lack of a simple analog to Lemma \ref{semContLem} 
for an upper bound. In other words, it is difficult to show that $\Emp(\CC) \approx P$ implies a bound of the type $\frac1N H_{\La_N}(\CC) \lesssim \W(P)$. Screening is the mechanism by which we will achieve this. {In particular, we map a significant number of the configurations $\CC \in \Conf(\La_N)$ such that $\Emp(\CC)\approx P$, to other configurations $\Cs\in \Neut(\La_N)$ that have a similar empirical field}
\be
	\Emp(\Cs) \approx \Emp(\CC) \approx P
\ee
and with energy 
\be
	\frac1NH_{\La_N}(\Cs) \lesssim \W(P). 
\ee
The inequality holds true even though $\frac1N H_{\La_N}(\CC)$ might be much larger than $\W(P)$, thus as far as the energy is concerned, the configuration $\Cs$ is better-behaved than $\CC$.
We will find a probability measure $\Pc_N$ on $\Conf(\La_N)\ti \Neut(\La_N)$ such that 
\begin{itemize} 
	\item [(i)] {The first marginal is the Poissonian reference measure projected to $\La_N$, i.e., the distribution $\Pois_{\La_N}$ where $\Pois$ is a Poisson process with independent Brownian bridge marks defined above (\ref{def:entP}). The second marginal corresponds to a Binomial process with independent Brownian bridge marks as {defined around} (\ref{def:BBmeasure}).} We shall write elements of $\Conf(\La_N)\ti \Neut(\La_N)$ as $(\CP, \Cco)$. 
	\item [(ii)] With high probability, the coupled Binomial process $\Cco$ is close to the screened configuration $\Cs = (\CP)^{\ms{scr}}$ in the sense that 
	\be
			\frac1N H_{\La_N}(\Cco) \lesssim \frac1N H_{\La_N}(\Cs), \qquad \Emp(\Cco) \approx \Emp(\Cs).
	\ee
\end{itemize} 
Then we may estimate for some $c>0$ and any $\delta>0$,
\begin{align*}
	\P_N\Bigl( \Emp(\CC) \approx P, \, \frac1NH_{\La_N}(\CC) \lesssim \W(P)\Bigr)
	 & =\Pc_N \Bigl( \Emp(\Cco) \approx P, \, \frac1NH_{\La_N}(\Cco) \lesssim \W(P)\Bigr) \\
	 & \gtrsim \Pc_N \Bigl( \Emp(\Cs) \approx P, \, \frac1NH_{\La_N}(\Cs) \lesssim \W(P)\Bigr) \\
	 & {{\gtrsim e^{-c\delta N}\Pc_N \Bigl( \Emp(\CP) \approx P\Bigr)}}\\
	 &\approx \exp \bigl( - N (\ent (P)+c\delta)\bigr),
\end{align*}
which is the kind of statement that we are after. 

For the rigorous proof, it is convenient to approximate empirical fields by block averages. Consider volumes 
\be \label{eq:blockunion} 
	\La_N = \bigcup_{i \le m} K_i, \quad K_i = [(i-1) R, iR)\ti D, \quad N = mR.
\ee
The block average of a configuration $\CC \in \ms{Conf}(\La_N)$ is 
\be\label{def:block average}
\DA(\CC) := \frac1m \sum_{i \le m} {\de_{\th_{x_i}}(\CC_{K_i}), \quad x_i = (i-1)R}.
\ee
The $m$- or $N$-dependence is suppressed from the notation. Notice that 
 $\DA(\CC)$ puts full mass on $\Conf(\La_R)$.
We shall see {in Step 3 of the proof of Proposition \ref{enBoundProp}, } that if the block average is close to a given shift-invariant measure $P\in \Ps$, then so is the empirical field.
{In our topology,} the block average is not sensitive to small changes of the configuration in the vicinity of block boundaries $\pa K_i$; this allows us to apply the screening construction from Section \ref{sec:screening}, block by block without changing too much the block and empirical averages. 

The coupling measure is defined blockwise.

{\subsection{Coupling within block $K_1$} \label{sec:coupling}
Let $(\Omega, \mc F, \P)$ be an enlarged probability space big enough to support all random variables introduced below.
We now describe the coupling measure in the fixed block $K_1$, which will eventually be used to form $\Pc_N$ on the full domain $\La_N$ with the correct marginal distributions as described in the beginning of the section. Throughout this subsection, we often suppress the dependence on $K_1$, by writing for example $(\CP, \Cco)=(\CP_{K_1}, \Cco_{K_1})$.
The joint distribution of $(\CP, \Cco)$ under the coupling measure $\Pc$ on $K_1$ will be defined using (regular) conditional probabilities. In particular, the first marginal $\CP$ simply has the marginal distribution $\Pois_{K_1}$, and we proceed to define the conditional distribution of the second marginal given $\CP$.}

As a first step, we let the coupling measure be a mixture 
$$\Pc = \frac12 \Pca + \frac12 \Pcb,$$
where under $\Pca$, the marginals are independent, i.e., the configuration $\Cco$ is independent from $\CP$ and has a distribution given by a 
 Binomial process with independent Brownian bridge marks as {defined around}  (\ref{def:BBmeasure}). The independent portion $\Pca$ is used later in Lemma \ref{fcProbLem} to handle configurations which are not $(M,\e)$-regular.

Under $\Pcb$ the marginals are dependent, and we construct the coupling as follows. 
Fix \mbox{$\de\in(0,1/2)$} {(before any choice of $M,\e,R$)} and put
{\be
	\La_{R, \e}:= [6\e R, R - 6\e R]\ti D.
	\ee}
 It is convenient to label some of the bridges in the configuration $\CP$. Suppose $\RP$, $\{\ZP_i\}_{i \ge 1}$, $\{\BP_i\}_{i \ge 1}$ are independent random variables such that: 
\begin{itemize}
	\item $\RP$ is a Poisson random variable with parameter $R - {12 \e R}$, 
	\item $\ZP_i$ is uniformly distributed in {$\La_{R, \e}$},
	\item $\BP_i$ is a Brownian bridge with $\BP_i(0) = 0$ and diffusivity 1,
\end{itemize} 
and such that $\CP_{{\La_{R, \e}}}:\Omega\to \Conf$ \tcb{can be represented as}
\be
 \bigcup_{i \le \RP } \{(\ZP_i, \ZP_i+ \BP_i)\}.
\ee
 We now specify the conditional distribution of $\Cco$ given $\CP$ by \tcb{fixing $B_i^\mathsf B := B_i^\mathsf P$ for all $i \le R$ and by} defining the initial points of the configuration $(\ZB_1, \dots, \ZB_R)$ as follows. Set 
{\be
	\de_1 := (\de - 12\e R) / (1 - 12\e R).
\ee}
Then, \tcb{treating each $i \le R$ independently}, with probability $1 - \de$ the variable $\ZB_i$ is \tcb{set to be} equal to $\ZP_i$, with probability $\de_1$ it is uniformly distributed in $\La_ R$, and with probability $\de - \de_1$ it is uniformly distributed in $\La_R\setminus {\La_{R, \e}}$.

If $\RP \ge R$ this determines the vector $(\ZB_1, \dots, \ZB_R)$---where in this case the points $\ZP_{R + 1}, \dots, \ZP_{\RP}$ are simply discarded. 

If $\RP < R$, we complete the definition by letting $\ZB_i$ be uniformly distributed in $\La_R$, for all $\RP < i \le R$ (we also produce extra independent random variables $\BP_i$ for all $\RP < i \le R$).

A technical description of the coupling under $\Pcb$, when $\CP$ is $(M,\e)$-regular, is via the probability kernel
\be
	K_{\ms{reg}}(z, \d z'):= (1 - \de) \de_z (\d z') + \de_1\frac1 R\, {\one_{\La_{R, \e}}(z') \d z' + (\de - \de_1)\frac1{12 \e R}\, \one_{\La_R \setminus \La_{R, \e}}(z') \d z'.}
\ee
Then, {$\P$-almost surely} on the event $\RP \le R$, for all measurable $A_1, \dots, A_R\su S$, 
\be\label{cpkernel}
	\P\big( \ZB_1\in A_1, \dots, \ZB_R \in A_R\big| \RP, \ZP_1, \dots, \ZP_{\RP} \big)
	 = \Big( \prod_{i \le \RP}K_{\ms{reg}}(\ZP_i, A_i) \Big) \ti \prod_{\RP+1 \le i \le R} \frac{|A_i|} R.
\ee
Finally, we put 
\be
	{\Cco} = \bigcup_{i \le R}\{(\ZB_i, \ZB_i+\BP_i)\},
\ee
and set $\Pcb$ on $\Conf(\La_R)\cap {\{(M,\e)\text{-regular}\}}\ti \Neut(\La_R)$ to be the joint law of $(\CP, \Cco)$. By construction, $\Pc$ is indeed a coupling. That is, it has the correct marginals.

{After construction of a measure with the correct marginals, the big space $\Omega$ is no longer of any use. We change notation slightly and from now on use the letter $\Pc$ for the distribution of $(\omega^\mathsf P, \omega^\mathsf B)$.}

\begin{lemma}\label{marg correct}
The coupling measure $\Pc$ on $\Conf(\La_R)\ti \Neut(\La_R)$ has as its first marginal
 distribution $\Pois_{\La_R}$ as in \eqref{def:entP}, and has as its second marginal distribution a Binomial bridge process as {defined around} \eqref{def:BBmeasure}.
\end{lemma}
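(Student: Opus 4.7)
The approach is to exploit the mixture decomposition $\Pc = \tfrac12 \Pca + \tfrac12 \Pcb$: both $\Pca$ and $\Pcb$ must be shown to have the claimed first and second marginals, after which averaging gives the result. Under $\Pca$ this holds by definition, since its two marginals are prescribed to be independent copies of $\Pois_{\La_R}$ and the binomial bridge process, respectively. All the content of the proof lies in verifying the two marginals of $\Pcb$.

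For the first marginal of $\Pcb$, I would unpack the construction of $\CP$: the restriction $\CP_{\La_{R,\e}}$ is by design a marked Poisson point process of intensity one on $\La_{R,\e}$ (its count $\RP$ is Poisson with mean $R-12\e R = |\La_{R,\e}|$, starting points $\ZP_i$ are i.i.d.\ uniform on $\La_{R,\e}$, and marks $\BP_i$ are i.i.d.\ standard Brownian bridges). Complementing this with an independent marked Poisson process of intensity one on $\La_R\setminus\La_{R,\e}$---tacit in the construction but required to make $\CP$ live on all of $\La_R$---the superposition property for Poisson processes immediately gives $\CP\sim\Pois_{\La_R}$.

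For the second marginal of $\Pcb$, the marks $\BP_i$ are i.i.d.\ standard Brownian bridges independent of everything else by construction, so only the joint law of $(\ZB_1,\dots,\ZB_R)$ needs to be checked. The plan is to condition on the Poisson count $\RP$. On $\{\RP\ge R\}$, formula \eqref{cpkernel} factorises the conditional law of $(\ZB_1,\dots,\ZB_R)$ given $(\ZP_1,\dots,\ZP_R)$ as a product of kernels $K_{\ms{reg}}(\ZP_i,\cdot)$, so the $\ZB_i$ are conditionally independent. Integrating over the conditional i.i.d.\ uniform-on-$\La_{R,\e}$ law of the $\ZP_i$ combines the three summands of $K_{\ms{reg}}$, and with the specified value of $\de_1$ one verifies by direct computation that the three contributions sum to the uniform density $R^{-1}\one_{\La_R}$ on $\La_R$. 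On $\{\RP<R\}$, the first $\RP$ entries are handled in the same way, while the remaining $R-\RP$ are freshly sampled uniformly on $\La_R$ by construction, so again the conditional law is $R$ i.i.d.\ uniform variables on $\La_R$. Since this conditional law does not depend on the value of $\RP$, the unconditional distribution of $(\ZB_1,\dots,\ZB_R)$ is $R$ i.i.d.\ uniform on $\La_R$, which together with the independent Brownian bridge marks is exactly a binomial bridge process on $\La_R$.

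The only real subtlety is bookkeeping: one must verify that the case split at $\RP = R$ produces a coherent joint distribution and that the three-term mixture in $K_{\ms{reg}}$ is correctly balanced so as to cancel the excess density on $\La_{R,\e}$ coming from the $\de_z$-atom. Both are straightforward direct calculations with the explicit $\de_1$, so I do not anticipate any serious technical obstacle beyond this verification.
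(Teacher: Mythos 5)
Your proof is correct, and since the paper itself disposes of this lemma with the one-liner ``By construction, $\Pc$ is indeed a coupling. That is, it has the correct marginals,'' you are filling in a verification that the authors left implicit rather than diverging from anything. The decomposition into $\Pca$ and $\Pcb$, the superposition argument for the first marginal, and the kernel-averaging computation for the second are exactly what is needed.

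Two small cautionary remarks worth recording. First, be careful with the case split: the product-kernel display \eqref{cpkernel} is stated for the event $\{\RP\le R\}$, not $\{\RP\ge R\}$ as you write; on $\{\RP> R\}$ the extra $\ZP_i$'s are discarded and one uses $K_{\ms{reg}}$ for all $R$ indices. Either way, the conditional law of $(\ZB_1,\dots,\ZB_R)$ given $\RP$ is a product of $R$ uniform laws on $\La_R$ independently of $\RP$, which is the essential point. Second, the paper's formula $\de_1 = (\de-12\e R)/(1-12\e R)$ is almost certainly a typo for $\de_1 = (\de-12\e)/(1-12\e)$: the Lebesgue fraction $|\La_R\setminus\La_{R,\e}|/|\La_R|$ equals $12\e$, and only with this corrected value does the density identity
\[
(1-\de)\,\frac{\one_{\La_{R,\e}}}{R(1-12\e)} \;+\; \de_1\,\frac{\one_{\La_R}}{R} \;+\; (\de-\de_1)\,\frac{\one_{\La_R\setminus\La_{R,\e}}}{12\e R} \;=\; \frac{\one_{\La_R}}{R}
\]
hold on both pieces of the partition. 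If one plugs in the printed $\de_1$ literally, the computation you invoke fails, so it is worth stating the corrected constant explicitly in any written-up version.
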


Next, we define the event of coupling to a screened configuration $\Fc\su \Conf(\La_R)\ti \Neut(\La_R)$ and bound $\Pc(\Fc | \CP)$ from below. Loosely speaking, $\Fc$ imposes that $\Cco$ is at most a small perturbation of {the screened configuration} $\Css$ defined in Section \ref{sec:screening}. {As mentioned above, here the screening construction is applied to $\CP$ and $\Css = (\CP)^\mathsf{scr}$.}

\medskip \noindent 

\begin{definition}[{Coupling to screened configurations}]
	The event $\Fc$ is defined by setting $A_{\ms{reg}} = \{\CP \text{ is }(M, \e)\text{-regular}\}$ and letting 
\begin{align}
		\Fc \cap A_{\ms{reg}}^c := \big\{\inf_{b' \in \Cco }|b - b'|_\ff \le 1/16 \text{ for every $b \in \Css$}\big\}\cap A_{\ms{reg}}^c
\end{align}
and 
$$
	\Fc \cap A_{\ms{reg}} := {\big\{\Cco_{\La_+} = \Css_{\La_+}\big\}} \cap \big\{\inf_{b' \in \Cco}|b - b'|_\ff \le 1/16 \text{ for every $b \in \Css \setminus \Css_{\La_+}$}\big\}\cap A_{\ms{reg}}.
$$
\end{definition}

The rest of this subsection is devoted to establishing a lower bound on the conditional probability of $\Fc$.

\begin{lemma}[{Probability of screening under the coupling}]
	\label{fcProbLem}
	There exists $c = c(P)$ with the following property. For all $\de \in (0, 1/2)$, $M > 2$ there exist $ \e_0(\de) > 0$, $R_0 = R_0(\e) > 0$ such that 
	\begin{align*} \Pc[\Fc\, |\, \CP]
	\ge \exp(-c R) + \exp(-c \de R )\one\{\CP \text{ is $(M, \e)$-regular}\}.\end{align*}
	holds for all $\e < \e_0(\de)$ and $R \ge R_0(\e)$.
\end{lemma}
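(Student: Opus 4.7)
The strategy is to split the coupling as $\Pc = \tfrac12 \Pca + \tfrac12 \Pcb$ and bound each contribution separately. The independent part $\Pca$ will produce the universal $\exp(-cR)$ term, which is sufficient to absorb the non-regular event $A_\mathrm{reg}^c$ where $\Css$ consists of $R$ deterministic time-constant straight bridges at the points $(i+\tfrac12,0)$. The dependent part $\Pcb$, used only on $A_\mathrm{reg}$, will produce the sharper $\exp(-c\delta R)$ term.

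For the $\Pca$-bound, note that under $\Pca$ the configuration $\Cco$ is a Binomial process with $R$ i.i.d.\ uniform starting points on $\La_R$ and independent Brownian-bridge marks $\BP_j$. The configuration $\Css$ (in either case) consists of $R$ bridges whose locations are functions only of $\CP$. To realize $\Fc$ it suffices to prescribe a permutation assigning each bridge of $\Css$ to one of the $R$ bridges of $\Cco$ so that (i) the starting point $\ZB_j$ lands in a ball of radius $1/32$ around the prescribed $\Css$-location, and (ii) the Brownian-bridge increment $\BP_j$ has sup-norm $\le 1/32$. Letting $v_0>0$ be the volume of such a ball in $S$ and $p_B = \P(\|B\|_\infty\le 1/32)>0$ for a standard Brownian bridge, independence across $j$ and Stirling's formula give
\[
\Pca[\Fc\mid\CP] \ \ge\ \frac{R!}{R^R}\,(v_0 p_B)^R \ \ge\ \exp(-cR),
\]
which yields the first term of the claim.

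For the $\Pcb$-bound on $A_\mathrm{reg}$, the event $\Fc$ requires (a) $\Cco_{\La_+} = \CP_{\La_+}$ and (b) that each of the $k = O(\e R)+O(\e^{-4})$ time-constant boundary bridges of $\Css$ in $\La_R\setminus\La_+$ has a partner in $\Cco$ within sup-distance $1/16$. For (a), every index $i$ with $\ZP_i\in\La_+$ must be kept fixed (probability $1-\delta$), and every index $j$ with $\ZP_j\notin\La_+$ must not re-sample into $\La_+$; since $\La_+\subset\La_{R,\e}$ (because $x_+> 6\e R$), the bad event occurs only under the $\delta_1$-option, with probability $\delta_1|\La_+|/R\le \delta$. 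Using $\#\CP \le 2R$ with overwhelming Poisson probability (since $|\La_{R,\e}|=R-12\e R$), condition (a) has conditional probability at least $(1-\delta)^{c_1 R}\ge \exp(-c_2\delta R)$. For (b), we select $k$ indices whose $\ZP$'s lie outside $\La_+$ (available since $\CP$ has on average $12\e R$ points in $\La_{R,\e}\setminus\La_+$ plus any in $\La_R\setminus\La_{R,\e}$), assign them via a permutation to the $k$ target positions, and ask each assigned $\ZB_{j_s}$ to re-sample uniformly on $\La_R$ (probability $\delta_1$) landing in a ball of radius $1/32$ around its target, with the corresponding bridge $\BP_{j_s}$ having sup-norm $\le 1/32$. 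The probability of this is at least
\[
\frac{k!}{(2R)^k}(\delta_1 v_0 p_B)^k \ \ge\ \exp\bigl(-c_3 \e R\,\log(1/(\e\delta))\bigr).
\]
Choosing $\e_0(\delta)$ so small that $\e\log(1/(\e\delta)) \le \delta/c_3$ for $\e<\e_0(\delta)$, and then $R_0(\e)$ so large that the subleading Poissonian corrections are controlled, yields $\Pcb[\Fc\mid\CP] \ge \exp(-c\delta R)$ on $A_\mathrm{reg}$, completing the proof.

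The main obstacle is condition (b): the $k\asymp \e R$ boundary bridges of $\Css$ lie at entirely deterministic positions independent of the random $\ZP_j$'s, so each target must be populated via the low-probability re-sampling branch of the kernel, producing an entropic cost of order $\e R\log(1/\e)$. The saving comes from the fact that this cost is parametrically smaller than $\delta R$ once $\e$ is chosen sufficiently small relative to $\delta$, which is precisely why the lemma fixes $\delta$ first and then chooses $\e_0(\delta)$ and $R_0(\e)$ in that order.
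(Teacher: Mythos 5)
Your overall decomposition is exactly right, and both entropy calculations (the $\Pca$-bound $\tfrac{R!}{R^R}(v_0 p_B)^R \ge e^{-cR}$ and the boundary re-sampling cost of order $\e R\log(1/(\e\de))$) match the paper's. However, there is a genuine gap in the $\Pcb$-step: you write ``Using $\#\CP \le 2R$ with overwhelming Poisson probability,'' but the lemma asserts a \emph{pointwise} lower bound for $\Pc[\Fc\mid\CP]$ that must hold for every realization of $\CP$, so a probabilistic tail bound on the Poisson count is not available here. If some $(M,\e)$-regular configuration $\CP$ were to have far more than $R$ points in $\La_{R,\e}$, your count $c_1 R$ in the exponent $(1-\de)^{c_1 R}$ would not be controlled, and moreover the construction of the coupling discards the extra points $\ZP_{R+1},\dots,\ZP_{\RP}$, which would make it impossible to realize $\Cco_{\La_+} = \Css_{\La_+}$ at all.

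The missing ingredient is that $(M,\e)$-regularity \emph{deterministically} implies $\RP \le R$. This is what the paper's Case~2 establishes first, via the charge-imbalance bound (Lemma~\ref{lem:chargimb1} applied to the energy constraint in the definition of regularity, exactly as in Lemma~\ref{noFracLem}): if $\RP \ge R$ then the net charge in $I_0\times D$ at time $0$ would be at least $4\e R$, while the energy bound and the tameness of the chosen abscissas force it to be at most $9\e^{-4}$ --- a contradiction once $R > 3/\e^5$. With that deterministic bound in hand, the count of bridges to be kept fixed is at most $R$, the discarding issue disappears, and the rest of your argument goes through as written.
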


\begin{proof}
	\emph{\noindent Case 1: $\CP$ is not $(M, \e)$-regular.} 
	
	Note that in this case $\Css$ is just $R$ straight lines at locations \eqref{screeningirreg}.
	Under $\Pca$, the probability that the starting point $b(0)$ of a fixed bridge $b \in \Cco$ is at distance at most $1/32$ from the starting point $b'(0)$ of a fixed bridge $b' \in \Css$ equals $1/(16R)$. Moreover, let $q_0 > 0$ be the probability that a Brownian bridge does not deviate by more than $1/32$ from its starting point. Then, the probability for that fixed bridge $b$ to satisfy $|b - b'|_\ff \le 1/16$ is at least $q_0/(16R)$.
	Since under $\Pca$, the coupled process $\Cco$ consists of $R$ i.i.d.\ bridges independent of $\CP$, we conclude that for all large $R$, via Stirling's approximation, that
	\begin{align} \label{eq:x0prime}
			\Pc(\Fc | \CP) \ge \frac12{R!\Big(\frac {q_0}{16R}\Big)^R \ge \frac 1 2\Big(\frac{q_0}{16e}\Big)^R.}
	\end{align}
	\emph{\noindent Case 2: $\CP$ is $(M, \e)$-regular.}
				To begin with, we assert that $\RP\le R$. Indeed, choosing a good boundary $x_0 \in [2\e R, 6\e R]$ using Lemma \ref{goodBoundLem}, we derive a bound on the charge imbalance {(=net charge)} in the domain $I_0 \ti D = [x_0, R - x_0] \ti \D$ at time 0 as in the proof of Lemma 	\ref{noFracLem}. More precisely, by part (2) of Lemma \ref{lem:chargimb1}, this charge imbalance is at most
	\be
	\label{eq:chargeImb3}
	 \Bigl({\sum_{x'\in \partial I_0} \frac4\b\int_{\D\ti [0, \b]} |\wt E_t(x',y)|^2 \d (y, t)} \Bigr)^{1/2}
	+ \sum_{x'\in \pa I_0}\#\big\{b\in \tilde \CC:\, \inf_{t \le \b}\ |b_x(t) - x'| \le 1 \big\}.
	\ee
	{(Here $\tilde \CC = \tilde{\omega^\mathsf P}$ is defined similarly to~\eqref{eq:bcchoice}.)} 
	The cardinalities on the right are bounded by {$4\e^{-4}$} because $x_0$ and $R-x_0$ are $\e$-tame.      Hence, we conclude as in the proof of Lemma \ref{noFracLem} that \eqref{eq:chargeImb3} is at most  $9\e^{-4}$. On the other hand, if $\RP \ge R$, then the charge imbalance at time $0$ is at least $ \RP- (R - 4\e R) \ge 4\e R$. Thus, $4\e R \le 9\e^{-4}$, which is absurd once $R > 3/\e^5$.

		Next, we work with $\Pcb$ and consider the domains $\La_+$ and $\La_R \setminus \La_+$ separately. By the construction of the coupling, when considering only the first $\RP$ bridges of the coupling, i.e. those produced by $K(z,\d z')$ in \eqref{cpkernel}, we have that $\Cco_{\La_+} = \Css_{\La_+}$ has a probability of at least \begin{align}
		\frac 1 2(1 - \de)^{\RP} \ge \frac 1 2(1 - \de)^R.
		\end{align}
	Now, leveraging similar charge-imbalance arguments as in \eqref{eq:chargeImb3}, we conclude that 
	\begin{align}
		\#\Css_{\La_+} \ge R - 20\e R - 9\e^{-4}.
	\end{align}
	At most $a = 21\e R$ bridges remain to be coupled. Arguing as in the irregular case, the probability that the remaining bridges from $\Cco$ are close to the remaining bridges of $\Css$ has probability at least 
	$$ \frac 12\frac{a! (\de_1 q_0)^a}{(16R)^a} \ge \frac 12\Big(\frac{a \de_1 q_0}{16e R}\Big)^a = \frac12\exp\big(21\e R \log(21\e \de_1q_0/(16e))\big), $$
	which is of order $\exp(-o(\e)R)$.
\end{proof}

{\subsection{Screened configurations have compatible electric fields} 
Continue to assume that the coupling measure is for the fixed block $K_1$.} We have the following extension of Proposition \ref{prop:detscreening}.

\begin{lemma}[Screened fields under the coupling]
	\label{fcScreenLem}
	Let $M > 2$, $\e \in (0, 1/ M^2)$ and $R > (2/\e)^8$.
	There exists an $M_0>1$ not depending on $M,R,\e$, and moreover, on the event $\Fc$, there exists a field $\{\EB_t\}_{t\le\b}$ that is screened and compatible with $\Cco$ in the sense of \eqref{eq:compatible} and \eqref{eq:screened}, such that
	\begin{enumerate}
		\item if $\CP$ is not $(M, \e)$-regular, then for every $t \le \b$
		$$\int_{\La_R} \bigl|\EB_t(z)\bigr|^2 \d z \le \vp_0(\wt E_t, \e, R),$$
			and 
		\be
			\frac1{2\b}\int_{[0, \b]}\vp_0(\wt E_t, \e, R) \d t \le M_0 R,
		\ee

		\item if $\CP$ is $(M, \e)$-regular, then  for every $t\le \b$
		$$\int_{\La_R} \bigl|\EB_t(z)\bigr|^2 \d z \le \vp_0(\wt E_t, \e, R),$$
			and 
		\begin{align}
			\label{screenEq}
				\frac1{2\b}\int_{[0, \b]} \vp_0(\wt E_t, \e, R) \d  t \le \tH_{\La_R}(\CP) + \vp(\e, R) MR,
		\end{align}

	\end{enumerate} 	
			where $\vp_0(\wt E_t, \e, R) \ge 0$ {is measurable in $t$} and  $\vp(\e, R) \ge 0$  satisfies
																	                        \be 
																				                                        \limsup_{\e\to 0} \limsup_{R\to \ff}\vp(\e, R) = 0.
																									                        \ee

\end{lemma}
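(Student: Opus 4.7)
The plan is to construct $\EB_t$ as a small perturbation of a screened field compatible with $\Css$, using localised dipole corrections for each bridge that must be moved from its $\Css$ position to its $\Cco$ position. The two cases of $(M,\e)$-regularity are handled separately but with the same overarching idea, which relies on the fact that on $\Fc$ each bridge of $\Css$ is within $\ell^\infty$-distance $1/16$ of a corresponding bridge in $\Cco$.

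\textbf{Construction of the base field.} In the irregular case, $\Css$ is the deterministic lattice configuration from~\eqref{screeningirreg}, consisting of $R$ time-constant regularly spaced charges in unit cells $[i-1,i]\times D$. I would define the base field $E^\scr_t$ cell by cell by solving the Neumann problem for a single smeared unit charge against the neutralising background with vanishing normal derivative on the cell boundary; each cell contributes energy of constant order (as cited from \cite[Lemma~5.7]{rouSe} in the proof of Proposition~\ref{prop:detscreening}), so the total energy satisfies $\int_{\La_R}|E^\scr_t|^2\,\d z\le M_0 R$ uniformly in $t$ for an absolute constant $M_0$. In the regular case I would simply take $E^\scr_t$ to be the field provided by Proposition~\ref{prop:detscreening}, which is screened, compatible with $\Css$, and satisfies both $\int_{\La_R}|E^\scr_t|^2\le \vp_0(\wt E_t,\e,R)$ and $\frac{1}{2\b}\int_0^\b \vp_0\,\d t \le \tH_{\La_R}(\CP)+\vp(\e,R)MR$.

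\textbf{Dipole correction.} For each paired $(b,b')$ with $b\in \Css$ and $b'\in \Cco$ and each $t\in[0,\b]$, I would construct a local ``dipole'' field $F^{(i)}_t$, supported in a ball of radius $1/4$ centred at the midpoint of $b(t)$ and $b'(t)$, with
$\div F^{(i)}_t=\de^\eta_{b'(t)}-\de^\eta_{b(t)}$ and vanishing normal component on the boundary of that ball. Since the right-hand side integrates to zero, the Neumann problem is solvable, and because the displacement is at most $1/16$ and $\eta$ is fixed, standard Neumann estimates give $\int|F^{(i)}_t|^2\le C$ for an absolute constant $C$. Setting $\EB_t:=E^\scr_t+\sum_i F^{(i)}_t$ produces a field that is compatible with $\Cco$ and, thanks to the compact support of the corrections away from $\pa\La_R$, remains screened.

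\textbf{Energy bookkeeping.} Using $|\EB_t|^2\le 2|E^\scr_t|^2+2|\sum_i F^{(i)}_t|^2$ and the (essentially) disjoint supports of the $F^{(i)}_t$, one gets $\int|\sum_i F^{(i)}_t|^2\le C\cdot \#\{\text{moved bridges}\}$. In the irregular case this is $\le CR$, which is absorbed into $M_0 R$ after enlarging $M_0$. In the regular case only bridges in $\La_R\setminus\La_+$ are moved (because $\Cco_{\La_+}=\Css_{\La_+}$ on $\Fc\cap A_{\ms{reg}}$), and charge-imbalance arguments as in the proof of Lemma~\ref{fcProbLem} show that there are $O(\e R)$ such bridges; the corresponding energy $O(\e R)$ is absorbed into $\vp(\e,R)MR$ by enlarging $\vp$ by a summand of order $\e/M$, which still satisfies $\vp\to 0$ in the required double limit. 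The factor $2$ in front of $|E^\scr_t|^2$ is absorbed by redefining $\vp_0$ similarly.

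\textbf{Main obstacle.} The key technical point is ensuring that each dipole correction $F^{(i)}_t$ is truly localised inside $\La_R$, so that it does not disturb the vanishing of the normal component of $E^\scr_t$ on $\pa\La_R$. The worry is that in the regular case the bridges requiring correction lie in $\La_R\setminus\La_+\subset[0,x_+]\cup[R-x_+,R]$, which touches the boundary. However, the construction of $\Css$ in Section~\ref{subsec:screenconfig} places every time-constant bridge at a distance at least $1/2$ from $\pa\La_R$, and on $\Fc$ the paired bridges in $\Cco$ are within $1/16$ of them, so the midpoint of every pair is at distance at least $1/2-1/16>1/4$ from $\pa\La_R$ and a support radius of $1/4$ suffices. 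In addition, should two dipole balls overlap, one groups the corresponding pairs and solves a single Neumann problem on a slightly enlarged ball, which still has bounded energy since only finitely many bridges can cluster within a bounded region under $(M,\e)$-regularity.
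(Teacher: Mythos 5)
Your construction via localised dipole corrections is a genuinely different route from the paper's. The paper (whose proof is only a sketch) re-runs the entire screening construction of Proposition~\ref{prop:detscreening} cell by cell with the perturbed charges $\Cco$ in place of $\Css$: since the displacement is at most $1/16$ and $\eta+1/16<1/3$, each perturbed smeared charge still sits well inside its cell $[i-1,i]\ti D$ or $\Ii\ti D$, so the Neumann problems from \cite[Lemma~5.7]{rouSe} remain solvable with comparable energy, $E^{(1)}$ is untouched, and Lemma~\ref{diff1Lem} handles the bulk. This avoids decomposing the field as a sum and hence avoids any Young-type inequality. Your approach instead keeps $E^\scr_t$ and superposes compactly supported dipole fields $F^{(i)}_t$, which is structurally clean and localises the comparison; but it pays with a quadratic cross term in the energy, and that is where your argument has a genuine gap.

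The problem is the sentence "The factor $2$ in front of $|E^\scr_t|^2$ is absorbed by redefining $\vp_0$ similarly." It cannot be: after time-integration you would need
\[
  2\Bigl( \tH_{\La_R}(\CP) + \vp(\e,R)MR\Bigr) + O(\e R) \;\le\; \tH_{\La_R}(\CP) + \vp'(\e,R)MR,
\]
which forces $\vp'(\e,R)MR \ge \tH_{\La_R}(\CP)$; since $(M,\e)$-regularity only guarantees $\tH_{\La_R}(\CP)<MR$, this would require $\vp'$ bounded below by a constant, contradicting $\limsup_{\e\to 0}\limsup_{R\to\ff}\vp'(\e,R)=0$. The fix is to use the weighted Young inequality $|a+b|^2\le (1+\lambda)|a|^2+(1+\lambda^{-1})|b|^2$ and let $\lambda=\lambda(\e)\to 0$ (say $\lambda=\sqrt\e$): since the correction energy is $O(\e R)$, the resulting error $\lambda\,\tH_{\La_R}(\CP)+(1+\lambda^{-1})O(\e R)$ is indeed $O(\sqrt\e)\,MR$, which can be absorbed into $\vp(\e,R)MR$. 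In the irregular case the crude factor-of-$2$ bound is harmless because everything is $O(R)$ and $M_0$ is just enlarged. A minor additional point: the support radius $1/4$ for the dipole ball is only safe if $\eta$ is bounded well below $1/4$; since the paper only assumes $\eta<1/4$, you should take a slightly larger fixed radius (any radius in $(\eta+1/32,\, 1/3)$ works, still fitting inside the cells because the $\Css$-charges lie at distance $\ge 1/3$ from cell boundaries by the choice $2/3\le\ell\le 2$). You should also state explicitly why the matching between $\Css$ and $\Cco$ is a bijection (it follows because $\#\Css=\#\Cco=R$, $\Cco_{\La_+}=\Css_{\La_+}$, and the $\Css$-charges outside $\La_+$ are $\ge 2/3$ apart while the matched $\Cco$-partners lie within $1/16$).
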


\begin{proof}
We give a sketch of proof. For the existence of $\EB_t$ one revisits the proof of Proposition~
	\ref{prop:detscreening} and checks the robustness of the construction with respect to the perturbations of $\Cco$ from $\Css$.
We note that in the irregular case, the configuration $\Css$ consists simply of evenly-spaced straight lines (see \eqref{screeningirreg}).
	{For the case where $\CC$ is $(M, \e)$-regular,  	 the coupled bridges $\{b_i\}$ of $\Cco$ stay away from the boundaries of the respective cells $\Ii$ or $[i - 1, i]\ti D$ and also away from the boundary $x_0$ since $\eta + 1/16 < 1/3$, see Definition \ref{def:css} and \eqref{2d_0 choice}.}
	The energy bounds in \cite[Lemma 5.7]{rouSe} are robust with respect to the perturbations of the smeared $\de$-point charges as long as they stay inside the integration domain. Hence, the arguments for the energy of $\EB_t$ in the domain $[0, \rmm]$ and for the energy of $E^{(2)}$ in the domain $[\rpp, x_0]$ carry over to the perturbed setting. Moreover, the electric field $E^{(1)}$ is independent of the perturbation. Finally, when considering the energy in the bulk $\La_+$, we note that the bounds from Lemma \ref{diff1Lem} also do not depend on the precise location of the bridges.
\end{proof}

\subsection{Most configurations are regular} 
\label{regSec}

We now show that under bridge processes of finite energy and entropy, it is highly likely to see $(M, \e)$-regular configurations.

\begin{lemma}[Most configurations are regular]
	\label{expAppCor}
	Let $P\in \Ps$ be such that $\ent(P) \vee \W(P) < \ff$. There exists $c = c(P) > 0$ with the following property.
	If $M > 2$ and $\e \in (0, 1/M^2)$, then 
	{\begin{align}
		\label{expAppBound}
		\liminf_{R \to \ff}		P\bigl( \CC_{\La_R}\text{ is $(M, \e)$-regular}\bigr) \ge 1 - \frac{\W(P)} M - c\e^{1/3}.
	\end{align}}
\end{lemma}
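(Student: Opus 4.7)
\medskip

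\noindent\textbf{Proof plan for Lemma \ref{expAppCor}.}
The plan is to bound the probability of failure of each of the two defining conditions of $(M,\e)$-regularity separately (Definition \ref{def:regularity}), and then conclude via a union bound. Write
\[
A_1 := \{\tH_{\La_R}(\CC) + \e \ge MR\}, \qquad A_2 := \Bigl\{\sum_{b\in \CC_{\La_R}}\big(\psi(b)^{7/6} - \e^{-7/3}\big)_+ \ge \tfrac{\e R}{2}\Bigr\}.
\]
Then $\CC_{\La_R}$ is $(M,\e)$-regular precisely on $A_1^c \cap A_2^c$, so it suffices to show $\limsup_R P(A_1) \le \W(P)/M$ and $\limsup_R P(A_2) \le c\,\e^{1/3}$ for some constant $c=c(P)$.

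For $A_1$, the key observation is that since $\e/R < M$ for large $R$, the event $\{\tH_{\La_R}(\CC) \ge MR - \e\}$ equals $\{(R^{-1}\tH_{\La_R}(\CC))\wedge M \ge M - \e/R\}$. Applying Markov's inequality to the bounded random variable $(R^{-1}\tH_{\La_R}(\CC))\wedge M$ gives
\[
P(A_1) \le \frac{\E_P\big[(R^{-1}\tH_{\La_R}(\CC)) \wedge M\big]}{M - \e/R}.
\]
By Definition \ref{specEnDef} and Proposition \ref{superAddLem}, the numerator converges to $\W^M(P)$ as $R\to\ff$, and $\W^M(P)\le \W(P)$ since $\W^M$ is increasing in $M$. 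Hence $\limsup_R P(A_1) \le \W(P)/M$.

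For $A_2$, the plan is to apply Lemma \ref{lem:entropy-estim} with $\z = \e^{-7/3}$ and $\La = \La_R$ (noting $|\La_R| = R$), which yields
\[
\E_{P}\Big[\sum_{b\in \CC_{\La_R}}\big(\psi(b)^{7/6} - \e^{-7/3}\big)_+\Big] \le R\big(c_0 + \ent(P)\big)\,\e^{(7/3)\cdot(5/8)} = R\big(c_0 + \ent(P)\big)\,\e^{35/24},
\]
for some absolute constant $c_0$. By Markov's inequality,
\[
P(A_2) \le \frac{R(c_0+\ent(P))\,\e^{35/24}}{\e R/2} = 2\big(c_0+\ent(P)\big)\,\e^{11/24}.
\]
Since $11/24 > 1/3$, for $\e \in (0,1)$ we have $\e^{11/24} \le \e^{1/3}$, so $P(A_2) \le c\,\e^{1/3}$ with $c := 2(c_0+\ent(P))$, uniformly in $R$. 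Combining both estimates by the union bound and taking complements gives \eqref{expAppBound}.

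The proof is essentially an application of Markov's inequality together with the previously established energy-density limit and the entropy-based bridge range bound; there is no significant obstacle. The only delicate point is the truncation trick that converts the tail bound on $\tH_{\La_R}$ into a statement about the truncated energy $(R^{-1}\tH_{\La_R})\wedge M$ whose expectation converges to $\W^M(P)$, since the raw energy need not have a finite limiting expectation per unit length.
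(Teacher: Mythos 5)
Your proof is correct and takes essentially the same route as the paper: Markov's inequality applied to the truncated energy $(R^{-1}\tH_{\La_R})\wedge M$, combined with Proposition \ref{superAddLem} for the first condition of $(M,\e)$-regularity, and Markov's inequality together with Lemma \ref{lem:entropy-estim} at $\z=\e^{-7/3}$ (yielding the exponent $11/24>1/3$) for the range condition. The only place you are slightly more careful than the paper is in tracking the $\e$ shift in the event $\{\tH_{\La_R}(\CC)+\e\ge MR\}$ via the denominator $M-\e/R$, which the paper suppresses; since $\e/R\to 0$ this makes no difference in the limit.
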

\noindent 
It follows in particular that if $\W(P)$ and $\ent(P)$ are both finite, then
\be
	\lim_{M \to \ff}\lim_{\e \to 0}\liminf_{R \to \ff} \ P\bigl( \CC_{\La_R}\text{ is $(M, \e)$-regular}\bigr) = 1. 
\ee
\begin{proof}
	 {By the Markov inequality and Proposition \ref{superAddLem}, 
		\begin{align*}\limsup_{R \to \ff} MP\Big(\WR(\CC) \ge M\Big)
			\le \limsup_{R\to \ff}\E_P\big[(R^{-1}\tH_{\b, \La_R}(\CC))\wedge M\big]
				&= \W^M(P).
		\end{align*}
		For the excursion condition of $(M,\e)$-regularity, we apply Lemma \ref{lem:entropy-estim} with $\z = \e^{-7/3}$ to get 
				\begin{align*}
					P\Big(\sum_{b \in \CC}(\psi(b)^{7/6} - \e^{-7/3})_+\ge \e R/2\Big) &\le \frac2{\e R}\E_P\Big[\sum_{b \in \CC}\big(\psi(b)^{7/6} - \e^{-7/3}\big)_+\Big]
\le c\e^{11/24}
		\end{align*}
		where $c$ may depend on $\ent(P)$.}
\end{proof}

\subsection{Proof of Proposition \ref{enBoundProp}} 
Fix $P \in \Ps$ with $\ent(P) \vee \W(P) < \ff$ and a neighborhood $U_{F, P}$ of the form (\ref{topology-basis}) where we may assume that the local test functions in $F = \{f_1, \dots, f_n\}$ satisfy
$\max_k\|f_k\|_\ff\le1$. Since each of the test functions is local, we also denote $d$ to be the maximum diameter of dependence over all $f_k$ in $F$. That is, $f_k(\CC) = f_k(\CC_\Delta)$ where $$\Delta := [- d/2, d/2]\ti D\su S.$$

		\medskip
	\noindent
	\textbf{Step 1: Concatenate screened blocks.}
	The coupling definition from Section \ref{sec:coupling} and Lemma \ref{fcScreenLem} extend from the block $\ol{K_1} = [0, R]\ti D$ to other blocks $\ol{K_i}$, with the obvious modifications. 
This provides us with measures $\Pc_i$ on $\Conf(K_i)\ti \Neut(K_i)$, events $$\Fc_i\su \Conf(K_i)\ti \Neut(K_i),$$ and fields $\EB_{i, t}: \ol{K_i} \to \R^{k + 1}$ satisfying among other conditions, 
\be
	\EB_{i, t} ((i - 1) R, y) \cdot \vec e_x = \EB_{i, t}(i R, y) \cdot \vec e_x
\ee
for all $t \le \b$, $i \le m$ and $y\in D$. 

	Writing $\{(\CP_i, \Cco_i)\}_{i \le m}$ for a collection of independent configurations distributed according to the product measure $\bigotimes_{i \le m} \Pc_i$, the superposition $\cup_{i \le m}\Cco_i$ is distributed according to a Binomial bridge process on $\La_N$ {conditioned on the event 
		$$A_{N, R}:=\bigcap_{i \le m}\{\#\Cco_{K_i} = R \}$$
		 of having exactly $R$ bridges in each block $K_i$. 	Hence, we define the global coupling $\Pc$ for $(\CP, \Cco)$ on $\La_N$ as a mixture that has weight $\P(A_{N, R})$ on the product measure $\bigotimes_{i \le m} \Pc_i$ and weight $1-\P(A_{N, R})$ on an independent coupling between a Poisson bridge process on $\La_N$ and a Binomial bridge process on $\La_N$ conditioned to have at least one block not containing exactly $R$ bridges. We then set
		 $$\Fc_{\La_N}:= \bigcap_{i \le m}\big\{(\CP_{K_i}, \Cco_{K_i}) \in \Fc_i\big\}.$$
Since, on the event $\Fc_{\La_N}$, the electric fields are screened for every $i \le m$ and $t \le \b$, their superposition defines a globally screened electric field 
\be
	\EB_t(z) = \sum_{i \le m} \one_{K_i}(z) \EB_{i, t}(z)
\ee
without creating additional divergence.
 Hence, on $\Fc_{\La_N}$
\be
	\div \EB_t = - \sum_{b\in \Cco} \de_{b(t)}^\eta + \one_{\La_N}.
\ee

	\medskip
\noindent 
\textbf{Step 2: Compare energies.} 

Fix $\de < 1$, which we may assume to satisfy $9(M_0 \W(P) + 3 + d)\de < 1$ and set $M  = 4/ \de^2$.

 Also let $\e <1/M^2$ be such that $\lim_{R \to \ff}\vp(\e, R) < \de^2/M$, such that Lemma \ref{fcProbLem} applies, and such that the error $c\e^{1/3}$ from Lemma \ref{expAppCor} is smaller than $\de^2$.
 
Finally, choose {$R > (2/\e)^8 \vee 18d/\de$} large enough that $\WRM(P) \le \W(P) + \de ^2$, such that Lemmas \ref{fcProbLem} and \ref{fcScreenLem} hold, and such that the error in Lemma \ref{expAppCor} is smaller than $\de^2$.

By \cite[Lemma 3.10]{serfInv}, replacing a screened field compatible with a charge-neutral configuration with the gradient of the potential associated to that configuration (which may possibly have different boundary conditions) can only decrease the energy. In other words, for every $t \le \b$,
\be \label{going to H}
 \int_{\La_N} \bigl|\nabla V_t(z, \Cco, \La_N)|^2 \d z \le \int_{\La_N} |\EB_t(z)|^2 \d z.
\ee
Let 
\be
	\II(\CC) := \big\{i \le m:\, \CC_{K_i} \text{ is $(M, \e)$-irregular}\big\}
\ee
be the set of indices corresponding to blocks with $(M, \e)$-irregular configurations.
By \eqref{going to H} and Lemma \ref{fcScreenLem}, on the event $\Fc_{\La_N}$, 
\be\label{128}
\begin{aligned}
	\frac1N H_{\La_N}(\Cco) & {\le \frac{\#\II(\CP) M_0 } m + \frac1m \sum_{i=1}^m \min\Bigl( \frac1 R \tilde H_{K_i}(\omega^\mathsf P),M\Bigr)+ \vp(\e, R) M}\\ 
		&= \frac{\#\II(\CP) M_0 } m + \WRM(\DA(\CP))+ \vp(\e, R) M.
\end{aligned}
\ee
Now, let 
\be \label{eq:neigh}
\begin{aligned}
	U_1(P) &:= \big\{Q\in \mc P:\, \bigl| \WRM(Q) - \WRM(P)\bigr|<\de^2\big\}\\
	U_2(P) &:= \big\{Q\in \mc P:\, \bigl| Q( \CC_{\La_R}\text{ is $(M, \e)$-irregular}) - P ( \CC_{\La_R}\text{ is $(M, \e)$-irregular}) \bigr|<\de^2\big\}.
\end{aligned} 
\ee
Then, if $\DA(\CP) \in U_2$, Lemma \ref{expAppCor} and the choices of $M$, $\e$ and $R$ give that 
\begin{align}
	\label{ibEq}
	\frac{\#\II(\CP)} m \le P( \CC_{\La_R}\text{ is $(M, \e)$-irregular}) + \de^ 2
	&\le {(\W(P) + 3)\de^2.}
\end{align}
Moreover, $\WRM(P) \le \W(P) + \de ^2$ and $\DA(\CP) \in U_1$ imply that 
\be
	\WRM(\DA(\CP)) \le \WRM(P) + \de^2 \le \W(P) + 2\de^2. 
\ee
The assumption on $\de$ from the beginning of this paragraph yields that on the event {$\Fc_{\La_N} \cap \{\DA(\CP) \in U_1 \cap U_2\}$,}
{\be \label{eq:energyfinal}
	\frac1N H_{\La_N}(\Cco) \le \W(P) + \big(M_0(\W(P)+3) + 3\big)\de^2 \le \W(P) + \de.
 \ee}
	\medskip
	\noindent 
\textbf{Step 3: Compare block and empirical averages.}  
We claim that, 
up to a null set of $\Pc$,
\be\label{u3 implication}
 \Fc_{\La_N}\cap \{\DA(\CP)\in U_3\}\su\{\Emp(\Cco) \in U_{F, P}\},
\ee
where 
\begin{align} \label{eq:neigh3}
	U_3 := \big\{Q\in \mc P:\, \max_{k \le r} \bigl| \E_Q[\bar f_k] - \E_P[\bar f_k]\bigr|< \de/3\big\}. 
\end{align}
with 
\be
	\bar f_k(\CC):= \frac1R \sum_{r \le R - 1} f_k(\th_r( \CC)).
\ee
Since $P$ is shift-invariant we have $\E_P[\bar f_k] = \E_P[f_k]$, so that
\begin{align} \label{eq:compare}
	\bigl|\E_{\Emp(\Cco)}[f_k]- \E_P[f_k]\bigr|
		&\nn\le \bigl| \E_{\Emp(\Cco)}[f_k] - \E_{\DA(\Cco)}[\bar f_k]\bigr| 
					+ \bigl|\E_{\DA(\Cco)}[\bar f_k] - \E_{\DA(\CP)}[\bar f_k]\bigr| \\
	&\phantom\le				+ \bigl|\E_{\DA(\CP)}[\bar f_k] - \E_P[\bar f_k]\bigr|. 
\end{align} 
It suffices to show that each of the three summands is bounded by $\de/3$. 

If $\DA(\CP) \in U_3$, then the third term on the right-hand side is smaller than $\de/3$. 

For the first term, observe 
\be
\begin{aligned}
	\E_{\Emp(\Cco)}[f_k] & = \frac1 m\sum_{j \le m} \bar f_k(\th_{x_j}(\Cco)), 
\end{aligned} 
\ee
where $x_j = (j-1)R$ is the left endpoint of $K_j$, 
and 
\be
\begin{aligned}
	\big| \E_{\Emp(\Cco)}[f_k]- \E_{\DA(\Cco)}[\bar f_k] \big| & \le \frac1m \sum_{j \le m} \bigl| \bar f_k(\th_{x_j}(\Cco)) - \bar f_k (\th_{x_j} (\Cco_{K_j})) \bigr|.
\end{aligned}
\ee
For $j = 1$ and $\CC \in \Conf$, we bound 
\be \label{eq:compare2}
\begin{aligned} 
	\bigl| \bar f_k(\CC) - \bar f_k(\CC_{K_1}) \bigr| & \le \frac1 R \sum_{r \le R - 1} \bigl| f_k(\th_r (\CC)) - f_k(\th_r(\CC_{K_1}))\bigr| \\
		&\le \frac1 R\, \#\big\{r \le R - 1:\, \pi_\Delta(\th_r (\CC)) \ne \pi_\Delta(\th_r(\CC_{K_1}))\big\}\\
		&\le 	\frac1R\#\big\{r \le R - 1:\, \th_{-r}\Delta \nsubseteq K_1 \big \},
\end{aligned}
\ee
which is at most $2d/R \le \de/3$.

Finally, for the second term on the right-hand side of \eqref{eq:compare}, proceeding as in \eqref{eq:compare2} yields
\begin{align*}
	\bigl|\E_{\DA(\Cco)}[\bar f_k] - \E_{\DA(\CP)}[\bar f_k]\bigr|&\le \frac1m \sum_{j \in \II(\CP)} \bigl| \bar f_k(\th_{x_j} (\Cco_{K_j})) - \bar f_k(\th_{x_j}( \CP_{K_j})) \bigr| \\
	&\phantom= +\frac1m \sum_{j \not \in \II(\CP)} \bigl| \bar f_k(\th_{x_j} (\Cco_{K_j})) - \bar f_k(\th_{x_j}( \CP_{K_j})) \bigr| \\
&\le \frac{2\#\II(\CP)}m + \frac{2d}R,
\end{align*}
where we have used that on the event $\Fc_{\La_N}\cap \{\CP \text{ is $(M,\e)$-regular}\}$, the configurations $\Cco_{K_1}$ and $\CP_{K_1}$ agree on $[6\e R, (1 - 6\e)R] \ti D$.
{The right-hand side above is at most $\de/3$ by \eqref{ibEq} and the choice of $R$.}

\medskip

\noindent \textbf{Step 4: Conclusion.} 
First, by Lemma \ref{marg correct}, \eqref{eq:energyfinal} and \eqref{u3 implication},
\begin{align*}
	&\P_N\Big( \Emp(\CC) \in U_{F, P}, \ N^{-1} H_{\La_N}(\CC) \le \W(P) + \de \Big)\\ 
	&\quad = \Pc \Big( \Emp(\Cco) \in U_{F, P}, \ N^{-1} H_{\La_N}(\Cco) \le \W(P) + \de \Big)\\
	&\quad\ge \Pc\big( \Fc_{\La_N}\cap \{\DA(\CP) \in U\}\big),
\end{align*}
where $U:= U_1\cap U_2 \cap U_3$ with $U_1, U_2, U_3$ given in \eqref{eq:neigh} and \eqref{eq:neigh3}.
Now, by Stirling's approximation, the probability
$$\P(A_{N, R})= \Pc({\#\Cco_{K_i}} = R \text{ for every $i \le m$})=\frac{N!}{(R!)^m}$$
of the Binomial point process consisting of $R$ bridges in each block $K_i$ is bounded below by $\exp(-\frac{cN\log R} R)$ for some $c > 0$. Hence, by Lemma \ref{fcProbLem} and \eqref{ibEq},
\begin{align*}
	\Pc\bigl[ \Fc_{\La_N}\, \big|\, \CP\bigr]\ge \P(A_{N, R})\prod_{i \le m}\Pc_i\big({(\CP_i , \Cco_i ) \in \Fc_i }\, \big|\, \CP\big) \ge \P(A_{N, R}) e^{-c \de N - c R\#\II}
	\ge e^{-c_1 \de N}
\end{align*}
for some $c_1>0$. Therefore,

\begin{align*}
	  \Ec\Bigl[ \Pc\bigl[ \Fc_{\La_N}\, \big|\, \CP\bigr] \one{\{\DA(\CP) \in U\}} \Bigr]
	& \ge \Ec\Bigl[ \exp( - c_1 \de N )\one{\{\DA(\CP) \in U\}} \Bigr]\\
	& = \exp( - c_1 \de N )\Pc \bigl( \DA(\CP) \in U\bigr).
\end{align*} 
Finally, by Sanov's theorem \cite[Theorem 6.2.10]{dz98},
	\begin{align}
	\label{sanovEq}
	\liminf_{N \to \ff}\frac1m \log\P\big(\DA(\CP) \in U\big) \ge - \inf_{Q\in U} \ent(Q\mid \Pois_{\La_R}) \ge -\ent(P_{\La_R}|\Pois_{\La_R}).
	\end{align}
	We conclude with $\ent( P) = \sup_R \ent_{\La_R}(P_{\La_R})${, see \cite[Remark 2.5]{georgii2}}.	
\qed

\subsection*{Acknowledgement.}
The authors thank the anonymous referees for providing us with high-quality feedback. Their constructive feedback helped us to improve the presentation in many places. The authors also thank D.~Chafa\"i, A.~van Enter and D.~Garc\'ia-Zelada for useful remarks on earlier versions of the manuscript.

\appendix
\section{{Energy in terms of electric field}}\label{sec:aux}

{Here, we prove Lemma \ref{splitLem} using a standard integration by parts.}

\begin{proof}[Proof of Lemma \ref{splitLem}] 
	In order to simplify notation, we fix $t$ and write $$V_N(z) = V(z, t, {\CC_{\La_N}}, \La_N).$$ Also for such fixed $t$, set $\{z_i\} = \{b_i(t):b_i\in{\CC_{\La_N}}, i\le N\}$. 
 Partial integration gives
	\begin{align*}
	\int_\T|\nabla V_N(z)|^2 \d z = \int_\T V_N(z) (-\Delta V_N(z)) \d z + \lim_{R \to \ff}\int_{\pa \La_R }V_N(z) \nabla V_N(z) \cdot \vec n \, \d z , 
	\end{align*}
	where $\vec n$ denotes the outer normal vector to the interface $\pa \La_R$. 	For each $\CC_{\La_N}$, there is an $R$ such that $\{z_1, \dots, z_N\}\su \La_R$, thus by charge neutrality, the second term on the right-hand side vanishes. 
		On the other hand, 
	\begin{align}\label{eq:part int}
	\int_{\T }V_N(z) (-\Delta V_N(z)) \d z = \sum_{i \le N}\frac1{|\pa B_\eta(o)|} \int_\T V_N(z) \de_{z_i}^\eta(\d z) - \int_{\La_N} V_N(z) \d z.
	\end{align}
	Now, insert the integral representation of the potential $V_N$ via the Green's function, i.e., 
	$$V_N(z) = \int_\T \gN(z - z') \Big(\sum_{i \le N} \de^\eta_{z_i} - \one_{\La_N}\Big)(\d z').$$
	In particular, the sum on the right-hand side of \eqref{eq:part int} becomes
	$$\sum_{i, j \le N}\gN^{\eta, \eta}(z_i - z_j) - \sum_{i \le N}\int_{\pa B_\eta(z_i)}\int_{\La_N} \gN(z - z')\d z' \de_{z_i}^\eta(\d z), $$
	whereas the subtracted integral on the right-hand side of \eqref{eq:part int} is
	$$\sum_{i \le N} \int_S\int_{\La_N} \gN(z - z')\d z\de_{z_i}^\eta(\d z') - \int_{\La_N} \int_{\La_N} \gN(z - z')\d z\d z'.$$
	Hence, splitting of the diagonal contributions in the double sum over $i$ and $j$ and re-arranging terms concludes the proof.
\end{proof}

\section{{Notation Guide}}

\begin{table}[h!]
	\begin{center}
		\caption{Notation Guide}
		\label{tab:table1}
		\begin{tabular}{r|l|l}
			Symbol & Object & Where defined \\
			\hline
			$\La_N$& $[0, N] \ti \D$ where $D = [0, 1]^k$&\eqref{def:TN} \\
			$	\H_N^\eta$ & classical potential energy of a configuration with smeared pts. & \eqref{potEnEetaEq}\\
			$\pi_\La(\CC)$ & projection of configuration $\CC$ onto $\La$& \eqref{def:proj}\\
			$P_\La$ & image of $P$ under $\pi_\La$& \eqref{projdef}\\
						$\Conf(\La)$ & space of bridge configurations $\CC$ in $\La$ &\eqref{def:conf}\\
						{$\P_N(\d\CC)$, $\E_N(\cdot)$}& bridge configuration measure on $\Neut(\La_N)$ & \eqref{def:BBmeasure} \\	
			{$ \mun(\d\CC)$}& Gibbs measure for bridge configurations &\eqref{def:muNb}\\

			$\Emp(\CC)$& empir. field constructed from $\CC$; a prob. meas. on config. space &\eqref{def:EMP}\\
						$\Pu$ &space of probability measures on bridge configurations & Below \eqref{def:EMP}\\
									$\Ps$& space of stationary probability measures on bridge configurations &Sec. \ref{subsec:entropy}\\
									${\mc U_{\text{meas}}(P)}$& family of neighborhoods of $P$&\eqref{def:N(P)}\\
									$	U_{F, P}$& cylinder set &\eqref{topology-basis}\\

			$\ent(P)$&specific relative entropy of $P$&\eqref{def:entP}\\
			{$H_\La(\CC)$} & energy of $\CC\in \ms{Conf}(\La)$ in terms of electric field & \eqref{Hdef} \\
									$\Neut(\La)$ & space of charge-neutral bridge configurations $\CC$ in $\La$& \eqref{def:neut}\\
			$\W(P)$& expected specific energy &\eqref{wDef}\\
				$\W^M(P)$& expected specific energy truncated at level $M$&\eqref{wDef1}\\
					$\WRM(P)$& truncated expected specific energy in a finite domain &\eqref{def:WRM}\\
			$\tH_{\b, K}(\CC)$ & minimal energy of compatible configurations restricted to $K$ 	&\eqref{eq:wkdef}\\
			$b_x$& $x$-coordinate of a bridge $b$&above \eqref{def:xrangefunctional}\\ 
	$\psi(b)$& $x$-range functional of a bridge $b$&\eqref{def:xrangefunctional}\\
			$\mu_t(\CC,A)$ & net charge of $\CC$ in the region $A$ in the time slice $t$ &\eqref{def: mu_t}\\
			$\CC'$ & regularized configuration &\eqref{eq:regstep}\\
			 $d_0$&  net {negative} charge to the left of $x_0$ &\eqref{eq:d0}\\
			$\DA(\CC)$ & block average of a configuration $\CC$ &\eqref{def:block average}
		\end{tabular}
	\end{center}
\end{table}

\FloatBarrier
\bibliographystyle{amsalpha}
\bibliography{biblio}

\newcommand{\etalchar}[1]{$^{#1}$}
\providecommand{\bysame}{\leavevmode\hbox to3em{\hrulefill}\thinspace}
\providecommand{\MR}{\relax\ifhmode\unskip\space\fi MR }
\providecommand{\MRhref}[2]{%
  \href{http://www.ams.org/mathscinet-getitem?mr=#1}{#2}
}
\providecommand{\href}[2]{#2}
\begin{thebibliography}{DRMN06}

\bibitem[AJJ10]{aizenman2010symmetry}
M.~Aizenman, S.~Jansen, and P.~Jung, \emph{{Symmetry breaking in quasi-1D
  Coulomb systems}}, Ann. Henri Poincar{\'e} \textbf{11} (2010), no.~8, 1--33.

\bibitem[AM80]{aizenman1980structure}
M.~Aizenman and P.~A. Martin, \emph{{Structure of Gibbs states of one
  dimensional Coulomb systems}}, Comm. Math. Phys. \textbf{78} (1980), no.~1,
  99--116.

\bibitem[BDK{\etalchar{+}}05]{bjelakovic-etal2005sanov}
I.~Bjelakovi\'{c}, J.-D. Deuschel, T.~Kr\"{u}ger, R.~Seiler,
  R.~Siegmund-Schultze, and A.~Szko{\l}a, \emph{A quantum version of {S}anov's
  theorem}, Comm. Math. Phys. \textbf{260} (2005), no.~3, 659--671.

\bibitem[Ber18]{berman2018large}
R.~J. Berman, \emph{On large deviations for {G}ibbs measures, mean energy and
  {G}amma-convergence}, Constructive Approximation \textbf{48} (2018), no.~1,
  3--30.

\bibitem[BG99]{bodineau-guionnet1999}
T.~Bodineau and A.~Guionnet, \emph{About the stationary states of vortex
  systems}, Ann. Inst. H. Poincar\'{e} Probab. Statist. \textbf{35} (1999),
  no.~2, 205--237.

\bibitem[BK94]{brydges1994absence}
D.~C. Brydges and G.~Keller, \emph{{Absence of Debye screening in the quantum
  Coulomb system}}, J. Stat. Phys. \textbf{76} (1994), no.~1, 285--297.

\bibitem[BKSSS04]{bjelakovic-etal2004inventiones}
I.~Bjelakovi\'{c}, T.~Kr\"{u}ger, R.~Siegmund-Schultze, and A.~Szko{\l}a,
  \emph{The {S}hannon-{M}c{M}illan theorem for ergodic quantum lattice
  systems}, Invent. Math. \textbf{155} (2004), no.~1, 203--222.

\bibitem[BL75]{brascamp1975some}
H.~J. Brascamp and E.~H. Lieb, \emph{{Some inequalities for Gaussian measures
  and the long-range order of the one-dimensional plasma}}, Functional
  Integration and Its Applications (A.~M. Arthurs, ed.), Clarendon Press, 1975,
  pp.~1--14.

\bibitem[BM99]{brydges1999coulomb}
D.~C. Brydges and P.~A. Martin, \emph{Coulomb systems at low density: A
  review}, J. Stat. Phys. \textbf{96} (1999), no.~5-6, 1163--1330.

\bibitem[CFS83]{choquard1983two}
Ph. Choquard, P.~J. Forrester, and E.~R. Smith, \emph{The two-dimensional
  one-component plasma at $\gamma$= 2: the semiperiodic strip}, J. Stat. Phys.
  \textbf{33} (1983), no.~1, 13--22.

\bibitem[CGZJ20]{chafai2020macroscopic}
D.~Chafa{\"\i}, D.~Garc{\'\i}a-Zelada, and P.~Jung, \emph{Macroscopic and edge
  behavior of a planar jellium}, Journal of Mathematical Physics \textbf{61}
  (2020), no.~3, 033304.

\bibitem[DB08]{deshpande2008one}
V.~V. Deshpande and M.~Bockrath, \emph{The one-dimensional {W}igner crystal in
  carbon nanotubes}, Nature Phys. \textbf{4} (2008), no.~4, 314--318.

\bibitem[DBGY10]{deshpande2010electron}
V.~V. Deshpande, M.~Bockrath, L.~I. Glazman, and A.~Yacoby, \emph{Electron
  liquids and solids in one dimension}, Nature \textbf{464} (2010), no.~7286,
  209.

\bibitem[Der03]{dereudre2003interacting-brownian}
D.~Dereudre, \emph{Interacting {B}rownian particles and {G}ibbs fields on
  pathspaces}, ESAIM Probab. Stat. \textbf{7} (2003), 251--277.

\bibitem[Deu87]{deuschel87diffusions}
J.-D. Deuschel, \emph{Infinite-dimensional diffusion processes as {G}ibbs
  measures on {$C[0,1]^{{\bf Z}^d}$}}, Probab. Theory Related Fields
  \textbf{76} (1987), no.~3, 325--340.

\bibitem[DHLM21]{dereudre}
D.~Dereudre, A.~Hardy, T.~Lebl{\'e}, and M.~Maida, \emph{{DLR} equations and
  rigidity for the {Sine}-beta process}, Comm. Pure Appl. Math. \textbf{74}
  (2021), no.~1, 172--222.

\bibitem[DLR20]{vaios}
P.~Dupuis, V.~Laschos, and K.~Ramanan, \emph{Large deviations for
  configurations generated by {G}ibbs distributions with energy functionals
  consisting of singular interaction and weakly confining potentials},
  Electron. J. Probab. \textbf{25} (2020), Paper No. 46, 41.

\bibitem[DPRZ02]{daipra-roelly-zessin2002}
P.~Dai~Pra, S.~Roelly, and H.~Zessin, \emph{A {G}ibbs variational principle in
  space-time for infinite-dimensional diffusions}, Probab. Theory Related
  Fields \textbf{122} (2002), no.~2, 289--315.

\bibitem[DRMN06]{deroeck-maes-netocny2006}
W.~De~Roeck, C.~Maes, and K.~Neto\v{c}n\'{y}, \emph{Quantum macrostates,
  equivalence of ensembles, and an {$H$}-theorem}, J. Math. Phys. \textbf{47}
  (2006), no.~7, 073303, 12 pp.

\bibitem[Dur19]{durrett2019probability}
R.~Durrett, \emph{Probability: theory and examples}, Cambridge University
  Press, 2019.

\bibitem[DZ98]{dz98}
A.~Dembo and O.~Zeitouni, \emph{Large deviations techniques and applications},
  second ed., Springer, New York, 1998.

\bibitem[EG99]{eichelsbacher-grunwald99}
P.~Eichelsbacher and M.~Grunwald, \emph{Exponential tightness can fail in the
  strong topology}, Statist. Probab. Lett. \textbf{41} (1999), no.~1, 83--86.

\bibitem[FJS83]{forrester1983two}
P.~J. Forrester, B.~Jancovici, and E.~R. Smith, \emph{The two-dimensional
  one-component plasma at $\gamma$= 2: Behavior of correlation functions in
  strip geometry}, J. Stat. Phys. \textbf{31} (1983), no.~1, 129--140.

\bibitem[For91]{forrester1991finite}
P.~J. Forrester, \emph{{Finite-size corrections to the free energy of Coulomb
  systems with a periodic boundary condition}}, J. Stat. Phys. \textbf{63}
  (1991), no.~3, 491--504.

\bibitem[Fri87]{fritz87gradient}
J.~Fritz, \emph{Gradient dynamics of infinite point systems}, Ann. Probab.
  \textbf{15} (1987), no.~2, 478--514.

\bibitem[Geo93]{georgii1}
H.-O. Georgii, \emph{{Large deviations and maximum entropy principle for
  interacting random fields on ${\bf Z}^d$}}, Ann. Probab. (1993), 1845--1875.

\bibitem[Geo94]{georgii1994equivalence}
\bysame, \emph{Large deviations and the equivalence of ensembles for {G}ibbsian
  particle systems with superstable interaction}, Probab. Theory Related Fields
  \textbf{99} (1994), no.~2, 171--195.

\bibitem[Gin65]{ginibre1965reduced}
J.~Ginibre, \emph{{Reduced density matrices of quantum gases. I. Limit of
  infinite volume}}, J. Math. Phys. \textbf{6} (1965), 238.

\bibitem[Gin71]{ginibre}
\bysame, \emph{{Some applications of functional integration in statistical
  mechanics}}, Statistical Mechanics and Quantum Field Theory (C.~de~Witt and
  R.~Stora, eds.), Gordon and Breach, New York, 1971, pp.~327--429.

\bibitem[GLM02]{gallavotti-lebowitz-mastropietro2002}
G.~Gallavotti, J.~L. Lebowitz, and V.~Mastropietro, \emph{Large deviations in
  rarefied quantum gases}, J. Statist. Phys. \textbf{108} (2002), no.~5-6,
  831--861, Dedicated to David Ruelle and Yasha Sinai on the occasion of their
  65th birthdays.

\bibitem[GZ93]{georgii2}
H.-O. Georgii and H.~Zessin, \emph{Large deviations and the maximum entropy
  principle for marked point random fields}, Probab. Theory Related Fields
  \textbf{96} (1993), no.~2, 177--204.

\bibitem[GZ19]{garcia2017large}
D.~Garc\'{\i}a-Zelada, \emph{A large deviation principle for empirical measures
  on {P}olish spaces: application to singular {G}ibbs measures on manifolds},
  Ann. Inst. Henri Poincar\'{e} Probab. Stat. \textbf{55} (2019), no.~3,
  1377--1401.

\bibitem[JJ14]{jansen2014wigner}
S.~Jansen and P.~Jung, \emph{Wigner crystallization in the quantum 1{D} jellium
  at all densities}, Comm. Math. Phys. \textbf{331} (2014), no.~3, 1133--1154.

\bibitem[JLS08]{jansen2008laughlin}
S.~Jansen, E.~H. Lieb, and R.~Seiler, \emph{Laughlin's function on a cylinder:
  plasma analogy and representation as a quantum polymer}, Physica Status
  Solidi (B) \textbf{245} (2008), no.~2, 439--446.

\bibitem[KCZ{\etalchar{+}}16]{PhysRevB.94.115417}
I.~Kyl\"anp\"a\"a, F.~Cavaliere, N.~Traverso Ziani, M.~Sassetti, and
  E.~R\"as\"anen, \emph{{Thermal effects on the Wigner localization and Friedel
  oscillations in many-electron nanowires}}, Phys. Rev. B \textbf{94} (2016),
  115417.

\bibitem[KRS20]{schlein}
K.~Kirkpatrick, S.~Rademacher, and B.~Schlein, \emph{A large deviation
  principle in many-body quantum dynamics}, Arxiv preprint arXiv:2010.13754
  (2020).

\bibitem[KS08]{neumann}
A.~S. Kim and Z.~Shen, \emph{The {N}eumann problem in {$L^p$} on {L}ipschitz
  and convex domains}, J. Funct. Anal. \textbf{255} (2008), no.~7, 1817--1830.

\bibitem[Lan77]{lang1977}
Reinhard Lang, \emph{Unendlich-dimensionale {W}ienerprozesse mit
  {W}echselwirkung. {I}. {E}xistenz}, Z. Wahrscheinlichkeitstheorie und Verw.
  Gebiete \textbf{38} (1977), no.~1, 55--72.

\bibitem[LI73]{lanford1973}
O.~E. Lanford~III, \emph{Entropy and equilibrium states in classical
  statistical mechanics}, Statistical mechanics and mathematical problems,
  Springer, 1973, pp.~1--113.

\bibitem[LLS00]{lebowitz-lenci-spohn2000}
J.~L. Lebowitz, M.~Lenci, and H.~Spohn, \emph{Large deviations for ideal
  quantum systems}, J. Math. Phys. \textbf{41} (2000), no.~3, 1224--1243,
  Probabilistic techniques in equilibrium and nonequilibrium statistical
  physics.

\bibitem[LP76]{lebowitz-presutti76}
J.~L. Lebowitz and E.~Presutti, \emph{Statistical mechanics of systems of
  unbounded spins}, Comm. Math. Phys. \textbf{50} (1976), no.~3, 195--218.

\bibitem[LP17]{last2017lectures}
G.~Last and M.~Penrose, \emph{{Lectures on the Poisson Process}}, Cambridge
  University Press, Cambridge, 2017.

\bibitem[LS17]{serfInv}
T.~Lebl{\'e} and S.~Serfaty, \emph{Large deviation principle for empirical
  fields of {L}og and {R}iesz gases}, Invent. Math. \textbf{209} (2017),
  1--113.

\bibitem[LW20]{liu2020large}
W.~Liu and L.~Wu, \emph{Large deviations for empirical measures of mean-field
  gibbs measures}, Stochastic Process. Appl. \textbf{130} (2020), no.~2,
  503--520.

\bibitem[MM08]{meyer2008wigner}
J.~S. Meyer and K.~A. Matveev, \emph{Wigner crystal physics in quantum wires},
  J. Phys. Condens. Matter \textbf{21} (2008), no.~2, 023203.

\bibitem[MP10]{bmBook}
P.~M\"orters and Y.~Peres, \emph{Brownian motion}, Cambridge University Press,
  Cambridge, 2010.

\bibitem[NR04]{netocny-redig2004}
K.~Neto\v{c}n\'{y} and F.~Redig, \emph{Large deviations for quantum spin
  systems}, J. Statist. Phys. \textbf{117} (2004), no.~3-4, 521--547.

\bibitem[Ons39]{onsager}
L.~Onsager, \emph{Electrostatic interaction of molecules}, J. Phys. Chem.
  \textbf{43} (1939), no.~2, 189--196.

\bibitem[ORB11]{ogata-reybellet2011}
Y.~Ogata and L.~Rey-Bellet, \emph{Ruelle-{L}anford functions and large
  deviations for asymptotically decoupled quantum systems}, Rev. Math. Phys.
  \textbf{23} (2011), no.~2, 211--232.

\bibitem[Osa12]{osada2012}
H.~Osada, \emph{Infinite-dimensional stochastic differential equations related
  to random matrices}, Probab. Theory Related Fields \textbf{153} (2012),
  no.~3-4, 471--509.

\bibitem[PS17]{petrache}
M.~Petrache and S.~Serfaty, \emph{Next order asymptotics and renormalized
  energy for {R}iesz interactions}, J. Inst. Math. Jussieu \textbf{16} (2017),
  no.~3, 501--569.

\bibitem[RAS15]{rassoul2015course}
F.~Rassoul-Agha and T.~Sepp{\"a}l{\"a}inen, \emph{A course on large deviations
  with an introduction to {G}ibbs measures}, vol. 162, American Mathematical
  Society, Providence, RI, 2015.

\bibitem[RCL18]{ray2018importance}
U.~Ray, G.~K.-L. Chan, and D.~T. Limmer, \emph{Importance sampling large
  deviations in nonequilibrium steady states. i}, J. Chem. Phys. \textbf{148}
  (2018), no.~12, 124120.

\bibitem[RS16]{rouSe}
N.~Rougerie and S.~Serfaty, \emph{Higher-dimensional {C}oulomb gases and
  renormalized energy functionals}, Comm. Pure Appl. Math. \textbf{69} (2016),
  no.~3, 519--605.

\bibitem[Rue70]{ruelle70}
D.~Ruelle, \emph{Superstable interactions in classical statistical mechanics},
  Comm. Math. Phys. \textbf{18} (1970), 127--159.

\bibitem[Rue76]{ruelle76}
\bysame, \emph{Probability estimates for continuous spin systems}, Comm. Math.
  Phys. \textbf{50} (1976), no.~3, 189--194.

\bibitem[Ser15]{serfBook}
S.~Serfaty, \emph{Coulomb gases and {G}inzburg-{L}andau vortices}, European
  Mathematical Society, Z\"urich, 2015.

\bibitem[Spo86]{spohn1986}
H.~Spohn, \emph{Equilibrium fluctuations for interacting {B}rownian particles},
  Comm. Math. Phys. \textbf{103} (1986), no.~1, 1--33.

\bibitem[Spo87]{spohn1986dyson}
\bysame, \emph{Interacting {B}rownian particles: a study of {D}yson's model},
  Hydrodynamic behavior and interacting particle systems ({M}inneapolis,
  {M}inn., 1986), IMA Vol. Math. Appl., vol.~9, Springer, New York, 1987,
  pp.~151--179.

\bibitem[SS12]{ginzburg}
E.~Sandier and S.~Serfaty, \emph{From the {G}inzburg-{L}andau model to vortex
  lattice problems}, Comm. Math. Phys. \textbf{313} (2012), no.~3, 635--743.

\bibitem[SS15]{sandSerf}
\bysame, \emph{1{D} log gases and the renormalized energy: crystallization at
  vanishing temperature}, Probab. Theory Related Fields \textbf{162} (2015),
  no.~3-4, 795--846.

\bibitem[{\v{S}}WK04]{vsamaj2004translation}
L.~{\v{S}}amaj, J.~Wagner, and P.~Kalinay, \emph{Translation symmetry breaking
  in the one-component plasma on the cylinder}, J. Stat. Phys. \textbf{117}
  (2004), no.~1, 159--178.

\bibitem[Tsa16]{tsai2016}
L.-C. Tsai, \emph{Infinite dimensional stochastic differential equations for
  {D}yson's model}, Probab. Theory Related Fields \textbf{166} (2016), no.~3-4,
  801--850.

\bibitem[Wig34]{wigner1934interaction}
E.~Wigner, \emph{On the interaction of electrons in metals}, Phys. Rev.
  \textbf{46} (1934), no.~11, 1002.

\end{thebibliography}

\end{document}